\def\R{\mathbb R}
\def\N{\mathbb N}
\def\G{\mathbb G}
\def\H{\mathbb H}
\def\E{\mathbb E}
\def\X{\mathrm X}
\def\Y{\mathrm Y}
\def\Z{\mathrm Z}
\def\T{\mathrm T}
\def\t{\tilde}
\def\b{\bar}
\def\wt{\widetilde}
\def\cR{\mathcal{R}}
\def\U{\Upsilon}
\def\HH{\mathcal{H}}
\def\g{\mathfrak{g}}
\def\D{\mathcal{D}}
\def\I{\mathrm{Id}}
\def\U{\mathcal{U}}
\def\bS{\mathbf{S}}
\newcommand{\cX}{\mathcal{X}}
\newtheorem{theorem}{Theorem}
\newtheorem{lemma}[theorem]{Lemma}
\newtheorem{corollary}[theorem]{Corollary}
\newtheorem{proposition}[theorem]{Proposition}
\theoremstyle{definition}
\newtheorem{definition}[theorem]{Definition}
\theoremstyle{remark}
\newtheorem{remark}[theorem]{Remark}
\newtheorem{example}[theorem]{Example}
\numberwithin{theorem}{section}
\title[H-semiconcavity for square of CC distance]{Horizontal semiconcavity for the square of Carnot-Carath\'eodory distance on step 2 Carnot groups
and applications to Hamilton-Jacobi equations}
\date{\today}
\author[F. Dragoni]{Federica Dragoni}
\address[F.~Dragoni]{\noindent School of Mathematics, Cardiff University, Cardiff,  UK, \tt dragonif@cardiff.ac.uk.} 
\author[Q. Liu]{Qing Liu}
\address[Q.~Liu]{Geometric Partial Differential Equations Unit, Okinawa Institute of Science and Technology Graduate University, Okinawa 904-0495, Japan, {\tt qing.liu@oist.jp}}
\author[Y. Zhang]{Ye Zhang}
\address[Y.~Zhang]{Analysis on Metric Spaces Unit, Okinawa Institute of Science and Technology Graduate University, Okinawa 904-0495, Japan, {\tt ye.zhang2@oist.jp}}
\begin{document}

\begin{abstract}
We show that the square of Carnot-Carath\'eodory distance from the origin, in step 2 Carnot groups, enjoys the horizontal semiconcavity (h-semiconcavity) everywhere in the group including the origin. We first give a proof in the case of ideal Carnot groups, based on the simple group structure as well as estimates for the Euclidean semiconcavity. Our proof of the general result involves more geometric properties of step 2 Carnot groups.  We further apply our h-semiconcavity result to show h-semiconcavity of the viscosity solutions to a class of non-coercive evolutive Hamilton-Jacobi equations by using the Hopf-Lax formula associated to the Carnot-Carath\'eodory metric. 
\end{abstract}

\subjclass[2020]{35R03, 35D40, 49L25}
\keywords{Carnot-Carath\'eodry distance, Carnot groups, Heisenberg group,  horizontal semiconcavity, Hamilton-Jacobi equations, Hopf-Lax formula}

\maketitle


\section{Introduction.}
\setcounter{equation}{0}

Semiconvexity and semiconcavity properties are key regularity properties for functions, related to bounds for the second derivatives, and applied in many different contexts. We refer to the monograph by Cannarsa and Sinestrari \cite{CS04} for an overview on the topic. One of the most interesting functions, where one can apply this property, is the distance function. It is easily seen that the standard Euclidean distance is convex and thus semiconvex everywhere but not semiconcave. However, the squared Euclidean distance to a given point is both semiconvex and semiconcave, since its Hessian is a constant nonnegative matrix. This leads to many interesting consequences, and it is somehow behind the successful use of the squared distance to prove many results in PDEs. This  opens to question the relation between semiconvexity/semiconcavity and distance functions in different geometrical settings.

In the case of Carnot groups,  there is a vast literature investigating the notion of convexity (or concavity) associated to their sub-Riemannian structure. Later in the paper we will review several known notions that can be defined in these spaces. Let us just quickly recall the notion of horizontal convexity (h-convexity for short) introduced by Lu-Mandredi-Stroffolini in the Heisenberg group \cite{LMS03}; see also \cite{JLMS07} for extension to more general Carnot groups. At the same time, the notion of h-convexity is also studied independently by Danielli-Garofalo-Nhieu in \cite{DGN03} by adapting the standard convexity definition to the algebraic structure of Carnot groups. The notion of h-concavity can be symmetrically defined; see Definition \ref{h-concavity} for a precise definition. Such a notion was later generalized by Bardi and the first author in \cite{BD11}  with a more geometrical approach that does not require any underlying Lie group structure and cover sub-Riemannian structures up to the case of Carnot-type H\"ormander vector fields. 

While many results are known for h-convex or h-concave functions, less research has been conducted on their semiconvex/semiconcave counterparts, which can be defined by easily adapting the concept of h-convexity/h-concavity. As an analogue of the Euclidean case, in \cite{BD11} it is shown that these notions are equivalent to bounds in the viscosity sense for the intrinsic Hessian. 
This equivalence demonstrates that h-semiconvexity/h-semiconcavity serves as a natural sub-Riemannian generalization, elucidating why proving h-semiconvexity/h-semiconcavity properties is exceptionally useful in studying degenerate PDEs associated with Carnot groups.

In this paper we are interested in exploring such semiconcavity for functions related to the metric of a class of sub-Riemannian manifolds. 
In the setting of Carnot groups, various notions of metric can be considered and they turn out to be all locally equivalent. In this work, we discuss the Carnot-Carath\'eodory distance (CC distance for short). 
It is also the geodesic distance, which can be defined as the minimal length among admissible curves joining two given points. A precise definition is given in Definition \ref{C-C distance}. It has many important analytic and geometric properties; for example it is the only distance solving the eikonal equation in this geometrical setting, as shown in \cite{{D07},{MS02}}. 

We attempt to provide h-concavity results for the CC distance in Carnot groups. Our main result of this paper is the following theorem.
\begin{theorem} \label{t2}
Let $\G$ be a step 2 Carnot group with CC distance $d$. Then $d^2(\cdot, 0)$ is h-semiconcave in $\G$.
\end{theorem}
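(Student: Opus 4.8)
The plan is to reduce h-semiconcavity of $f := d^2(\cdot,0)$ to a uniform second-order difference estimate along horizontal directions and then prove that estimate by a control-theoretic competitor construction. Concretely, recalling that h-semiconcavity is equivalent, through the intrinsic-Hessian characterization, to the existence of a constant $C>0$ with
$$f(p\cdot\exp(v)) + f(p\cdot\exp(-v)) - 2f(p) \le C\,|v|^2$$
for every $p\in\G$ and every horizontal $v\in V_1$, the whole problem reduces to producing such a $C$ \emph{uniformly} in the base point $p$ and the direction $v$. I would not expect any further reduction of $f$ itself by left translation, since it is the metric $d$, and not $f$, that is left-invariant; the uniformity in $p$ must instead be earned by the construction below.

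The main tool is the endpoint map together with the quadratic structure special to step $2$. Writing points as $(x,t)\in\R^m\times\R^n$ with the step $2$ group law, the endpoint $F(u)$ of the horizontal curve driven by a control $u\in L^2([0,1];\R^m)$ from the origin has horizontal component $\int_0^1 u$, which is \emph{linear} in $u$, and vertical component $Q(u)$, a fixed \emph{quadratic} form; thus $f(p)=\min\{\|u\|_{L^2}^2:\ F(u)=p\}$. Let $u_p$ be a minimizing control for $p$. Since $p\cdot\exp(\pm v)=(x\pm v,\ t\pm\tfrac12\langle Bx,v\rangle)$, I would look for competitor controls of the symmetric form $u^\pm=u_p+w^\pm$ with $w^\pm := \pm w_1 + w_2^\pm$, where $w_1$ solves the linearized constraints $\int w_1=v$ and $2B(u_p,w_1)=\tfrac12\langle Bx,v\rangle$ (here $B(\cdot,\cdot)$ polarizes $Q$), and $w_2^\pm$ are quadratic corrections absorbing $Q(w_1)$ so that $F(u^\pm)=p\cdot\exp(\pm v)$ exactly. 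Expanding the energies,
$$\|u^+\|^2+\|u^-\|^2-2\|u_p\|^2 = 2\langle u_p,\,w_2^++w_2^-\rangle + \|w^+\|^2+\|w^-\|^2,$$
where the point of the symmetric ansatz is that the dangerous first-order cross terms $\pm 2\langle u_p,w_1\rangle$ cancel, while $w_1=O(|v|)$ and $w_2^\pm=O(|v|^2)$. This yields the bound with an explicit $C$, \emph{provided} $w_1,w_2^\pm$ can be built with norms controlled uniformly in $p$.

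Following the strategy indicated in the abstract, I would first run this argument on ideal Carnot groups. There the absence of nontrivial abnormal minimizers makes the differential of the endpoint map at $u_p$ surjective, so $w_1$ exists with $\|w_1\|\lesssim|v|$ and the correction $w_2^\pm$ is controlled; alternatively one can exploit the explicit solutions of the geodesic ODEs to write $f$ as a composition involving the Euclidean square norm and transfer the constant-Hessian Euclidean semiconcavity of $|\cdot|^2$ through the smooth parts of the group structure. The origin is harmless, since $f(\exp(\pm v))=|v|^2$ there.

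The hard part will be the general step $2$ case at points where the differential of the endpoint map at $u_p$ degenerates — the abnormal set, which for these groups is essentially the vertical directions together with the cut locus — because there the linearized constraints may be unsolvable, or solvable only with norms that blow up, destroying the uniformity of $C$. I would handle this using the finer geometry of step $2$ groups: the explicit classification of their normal and abnormal geodesics, together with the homogeneity of $d$ under the anisotropic dilations, which lets one renormalize and reduce the estimate near the bad set to a compact family of configurations. Phrasing the final inequality in the viscosity sense, as permitted by the intrinsic-Hessian characterization, removes the need for pointwise differentiability of $f$ on this singular set and allows a limiting argument from the regular points to close the proof.
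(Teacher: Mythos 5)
Your symmetric competitor ansatz $u^{\pm}=u_p\pm w_1+w_2^{\pm}$ (so that the first-order terms $\pm2\langle u_p,w_1\rangle$ cancel, and in step 2 the endpoint map is exactly quadratic, so no higher-order remainders appear) is sound as far as it goes, and in the ideal case it is a reasonable substitute for the paper's use of the Cannarsa--Rifford local Euclidean semiconcavity. But for general step 2 groups there is a genuine gap exactly where you locate the difficulty, and the patch you propose does not close it. Your construction needs the linearized constraint $D\mathcal{E}_0(u_p)w_1=(v,\mathcal{B}(x,v))$ to be solvable with $\|w_1\|_{L^2}\lesssim|v|$ \emph{uniformly} in $p$ (after dilating to the unit CC sphere); this is a quantitative surjectivity statement for $D\mathcal{E}_0(u_p)$, which is precisely what fails when $u_p$ is an abnormal (singular) minimizing control. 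Such minimizers do exist in non-ideal step 2 groups: in Example~\ref{e2} of the paper, the curve $\tau\mapsto(\tau,0,0,0)$ in $\R\times\H$ is a nontrivial abnormal minimizer, and note that it runs in a \emph{horizontal} direction, so your description of the bad set as ``essentially the vertical directions together with the cut locus'' is incorrect. Moreover, your two rescue mechanisms cannot work as stated: the abnormal set is a dilation-invariant cone, so renormalizing by $\delta_{1/r}$ never moves you away from the degenerate configurations; and a limiting argument from regular points presupposes that the constant $C$ stays bounded as $p$ approaches the abnormal set, which is exactly the assertion to be proved --- it is false in the step 3 Engel group (Proposition~\ref{cp2}), so no soft argument that does not use step 2 quantitatively can succeed.

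The missing idea, and the route the paper takes, is to abandon surjectivity of the endpoint differential and exploit instead that in step 2 Carnot groups \emph{every} minimizing geodesic is normal, i.e.\ the projection of a normal extremal, even when it is also abnormal. The paper proves a uniform bound on the initial covectors of these extremals over compact sets (Lemma~\ref{lp29}), feeds this into Badreddine--Rifford's result to obtain $C$-nearly horizontal semiconcavity of $d_0^2$, upgrades that to h-semiconcavity using the local Euclidean Lipschitz continuity of $d_0^2$ (Proposition~\ref{propCnhs}; the Lipschitz hypothesis is not cosmetic, since Example~\ref{ce} shows $C$-nearly horizontal semiconcavity alone does not imply h-semiconcavity), and finally globalizes by the dilation-and-unit-sphere argument you also have in mind. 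In your language: near an abnormal-but-normal minimizer the competitors must be manufactured from the normal extremal flow with perturbed covector, with uniform $C^2$ bounds coming from the covector bound, rather than by inverting $D\mathcal{E}_0(u_p)$. If you wish to keep a self-contained control-theoretic proof, you would have to rebuild that part of the Badreddine--Rifford argument; as written, your proof does not get past the abnormal points.
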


Although our result holds for general Carnot groups of step 2, we shall first focus on the so-called ideal Carnot groups, a subclass of step 2 Carnot groups that includes the Heisenberg group and general H-type groups as simple yet significant examples. We choose to begin with a proof for this special case, because it only makes use of several well-understood results concerning the group structure and the local Euclidean semiconcavity without requiring any further techniques. An important property of ideal Carnot groups is that the abnormal set, which consists of endpoints of abnormal minimizing geodesics starting from the group identity, contains only the identity itself. Under certain assumptions for a general Carnot group, it is shown in \cite{CR08} that the CC distance from the identity is locally Euclidean semiconcave, and therefore locally h-semiconcave, away from the abnormal set. See also \cite[Theorem 5.9]{FR10} for this result.
 However, despite many potential applications in nonlinear analysis and PDEs, the behavior near the identity has not been well understood in the literature. 

Our analysis complements \cite{CR08, FR10}, extending the h-semiconcavity of $d^2(\cdot, 0)$ to the identity, and thus obtaining the regularity globally. In the case of ideal Carnot groups, we prove the h-semiconcavity of $d^2(\cdot, 0)$ by combining several general results from Lie groups and viscosity solution techniques. The simple structure of the abnormal set for ideal Carnot groups enable us to obtain local estimates near the identity via a comparison of the homogeneous norm.  



Our proof of the general result as stated in Theorem \ref{t2} involves more geometric aspects of Carnot groups. It utilizes the notion of so-called $C$-nearly semiconcavity,  geometric properties of the endpoint map  
and the fact that  all minimizing  geodesics in step 2 Carnot groups are normal. The notion of $C$-nearly horizontal semiconcavity was recently introduced by Badreddine and Rifford in \cite{BR18}. 
In the Euclidean setting, 
it is not equivalent to the standard notion of semiconcavity unless we 
restrict it to compact sets and allow the constants in its definition to depend on the compact set. 
For Carnot groups, we are able to prove that, while in general $C$-nearly horizontal semiconcavity is locally weaker 
than h-semiconcavity, they are indeed equivalent under the additional assumption of local Euclidean Lipschitz continuity for the function. Fortunately, such a local Lipschitz regularity does hold for the squared CC distance in step 2 Carnot groups. Therefore we can replace the local Euclidean semiconcavity with the results in  \cite{BR18} and show the h-semiconcavity of $d^2(\cdot, 0)$ for all Carnot groups of step 2.

It is worth mentioning that the assumption of step 2 is essential; in fact, as shown in  \cite{MM16}, the squared CC distance fails to be h-semiconcave in the Engel group, which is of step 3 and thus not ideal; see Proposition~\ref{cp2}. 


This new regularity property for the squared CC distance leads to various applications to the study of nonlinear PDEs in step 2 Carnot groups. We include an application to Hamilton-Jacobi equations in Section \ref{sec:app}. For a class of time-dependent convex Hamilton-Jacobi equations in step 2 Carnot groups, we show the spatial h-semiconcavity of viscosity solution that is given by the Hopf-Lax formula (associated to the CC-metric). The h-semiconcavity constant we obtained depends on $t>0$ but is independent of the space variables. Our result provides a sub-Riemannian generalization of the Euclidean counterpart; see for example \cite[Theorem 1.6.1]{CS04} for the spatial semiconcavity of the Hopf-Lax solution to Hamilton-Jacobi equations in the Euclidean space.  

Although we have proved that the square of the CC distance is h-semiconcave, its regularity turns out to be still very different from the Euclidean case. It is obvious that the squared Euclidean distance from the origin is convex in the space. In contrast, we will show that, the squared CC distance fails to be h-semiconvex in the Heisenberg group; see Proposition \ref{cp1}. More related discussions will be given in a forthcoming paper \cite{LZZ23}. 

The paper is organized as follows: In Section \ref{sec:carnot} we go over some basics about Carnot groups including the group multiplication, the dilation and the CC distance.  We recall some known (local) inclusions between the CC balls and the Euclidean balls, and between the CC distance and the homogeneous distance. 
The notions and properties of the endpoint map, normal and abnormal geodesics, and ideal Carnot groups are reviewed in Section \ref{sec:id-carnot}. 
Some related details for the special case of the Heisenberg group as well as the notions of the horizontal gradient and the horizontal Laplacian (or sub-Laplacian) are provided in these two sections as well. 
In Section \ref{sec:h-concavity} we go over the notions and basic properties of h-concave and h-semiconcave functions. 

Section \ref{sec:pf} is devoted to the proof of Theorem \ref{t1} in the case of ideal Carnot groups. We also disprove the h-convexity of the squared CC distance. 
In Section \ref{sec:step2} we prove Theorem \ref{t2}, extending Theorem \ref{t1} to the case of all Carnot groups of step 2. We also show that the assumption of step 2 is necessary for the result to hold. 
Some related consequences and generalizations are collected in Section \ref{sec:rc}. We provide our applications to Hamilton-Jacobi equations in Section \ref{sec:app}, showing the h-semiconcavity of Hopf-Lax solutions in space under suitable assumptions for the Hamiltonian. 

\subsection*{Acknowledgments}
The authors would like to thank Luca Rizzi for providing helpful comments on the first version of the manuscript and bringing to our attention the notion of $C$-nearly horizontal semiconcavity in \cite{BR18}.

The work of QL was supported by JSPS Grant-in-Aid for Scientific Research (No.~19K03574, No.~22K03396).  The work of YZ was supported by JSPS Grant-in-Aid for Early-Career Scientists (No.~24K16928).

\section{Preliminaries}\label{sec:pre}
\setcounter{equation}{0}

\subsection{Carnot groups}\label{sec:carnot}

We begin with some basic facts about Carnot groups. For more details, we refer to \cite{BLU07}.

\begin{definition}[Carnot group]
\label{Carnot}
A  {\it Carnot group} is a connected and simply connected Lie group $\G$ whose Lie algebra $\g$ 
has a stratification $\g = \bigoplus_{j=1}^s \g_j$, that is, a linear splitting $\g = \bigoplus_{j=1}^s \g_j$ where $[\g_1,\g_j]=\g_{j+1}$ for $j =1,\dots,s-1$ and $[\g_1,\g_s]=\{0\}$. If $\g_s \ne \{0\}$, the number $s$ is called the {\it step} of $\G$. 
\end{definition}
Note that the case $s=1$ coincides with the standard Euclidean space, therefore, here we will always consider the case $s\geq 2$.

By using the exponential map, we can always identify a Carnot group $\G$ with its Lie algebra $\g$ with the group law given by the so-called Baker-Campbell-Dynkin-Hausdorff formula; see \cite[\S~15]{BLU07} for more details. Furthermore, by choosing a suitable basis of $\g$ consisting of bases of $\g_j$, it can be further identified with $(\R^n, \cdot)$ with $\R^n = \R^{n_1} \times \ldots \times \R^{n_s}$, where  $\cdot$ is a non commutative operation. Here $n=n_1+\dots+n_s$ denotes the topological dimension of $\G$ as a manifold and $n_j$ represents the dimension of $\g_j$. After this identification, the group identity becomes $0$ and $p^{-1} = -p$.  For $r > 0$, writing $p = (p^{(1)}, \ldots, p^{(s)}) \in \R^n \cong \R^{n_1} \times \ldots \times \R^{n_s}$, we can define the {\it dilation} $\delta_r$ on $(\R^n, \cdot)$ by
\[
\delta_r(p^{(1)}, \ldots, p^{(s)}) := (r p^{(1)}, \ldots, r^s p^{(s)}),
\]
which is an automorphism of $(\R^n, \cdot)$. Note that the dilations defined above are anisotropic; for a more formal definition of the dilations defined on Carnot groups, we refer to \cite{BLU07}.

 Moreover, the group multiplication satisfies
\begin{align}\label{BCDH}
p \cdot q = p + q + \cR(p,q), \qquad \forall \, p,q \in \R^n, 
\end{align}
with $\cR = (\cR^{(1)}, \ldots, \cR^{(s)}) \in  \R^{n_1} \times \ldots \times \R^{n_s}$, $\cR^{(j)}$ a polynomial depending only on the first $n_1 + \ldots + n_{j - 1}$ variables of $p$ and $q$, i.e. the variables associated  via the exponential map with the first $j - 1$ layer of the Lie algebra. In particular, when the step $s = 2$, we have
\begin{align}\label{BCDH2}
\cR(p,q) = (0, \mathcal{B}(p^{(1)},q^{(1)})) \in \R^n \cong \R^{n_1} \times \R^{n_2}, \qquad \forall \, p,q \in \R^n,
\end{align}
for some skew-symmetric bilinear form $\mathcal{B}: \R^{n_1} \times \R^{n_1} \to \R^{n_2}$. 
For this identification, more details can be found in \cite[Proposition 2.2.22 and \S~3.2]{BLU07}.

\begin{remark}
Given $\G$ step two Carnot group, let us consider $\cR(p,q)$ introduced in \eqref{BCDH2}, there exists a constant $C_0>0$ such that
\begin{equation}
\label{C_0}
|\cR(p,q)| = |\mathcal{B}(p^{(1)}, q^{(1)})| \le C_0 |p^{(1)}||q^{(1)}| \le C_0 |p| |q|,  \quad \forall \, p, q \in \G,
\end{equation}
where $|\cdot|$ is the norm on $\G \cong \R^n$.
\end{remark}

For the sake of simplicity, we use $m := n_1 = \mathrm{dim} \g_1$ and write $p^{(1)} = (p_1, \ldots, p_m)$. 
\begin{definition}
\label{VectorFields}
For $1 \le i \le m$, we use $\X_i \in \g_1$ to denote the left-invariant vector field on $\G$ which coincides with $\frac{\partial}{\partial p_i}$ at the identity. Note that $\{\X_1,\ldots, \X_m\}$ forms a basis of $\g_1$.
\end{definition}

\begin{example}\label{e1}
The {\it Heisenberg group} $\H = \R^3 \cong \R^2 \times \R$ is the simplest Carnot group  whose group law \eqref{BCDH}
 is given by 
\[
(x,y,z) \cdot (\t{x},\t{y},\t{z}) = \left( x + \t{x}, y+ \t{y}, z + \t{z} + \frac{1}{2} (x \t{y} - \t{x} y)\right),
\]
which means that $\mathcal{B}((x,y),(\t{x},\t{y})) = \frac{1}{2} (x \t{y} - \t{x} y)$ in \eqref{BCDH2}. From the group multiplication defined above, it is easy to see that the center of $\H$ (the set of elements which can commute with all the other elements) is $\{0\} \times \R$.
The basis of $\g_1$ is given by
\begin{equation}
\label{vectorFieldsHeisenberg}
\X_1 = \frac{\partial}{\partial x} - \frac{y}{2} \frac{\partial}{\partial z}, \qquad \X_2 = \frac{\partial}{\partial y} + \frac{x}{2} \frac{\partial}{\partial z}.
\end{equation}
Let $\Y = \frac{\partial}{\partial z}$. The Lie algebra $\g$ of $\H$ is given by $\g = \g_1 \oplus \g_2$ with 
\[
\g_1 = \mathrm{span} \{\X_1, \X_2\}, \qquad \g_2 = \mathrm{span} \{\Y\}.
\]
and the only nontrivial bracket relation of $\g$ is $[\X_1, \X_2] = \Y$. In this  particular case of the Heisenberg group, one can show easily that \eqref{C_0} holds with $C_0=1$. In fact, we have 
\[
 |\mathcal{B}(p^{(1)}, q^{(1)})|=\frac{1}{2}|x \t{y}-\t{x}y|\leq \frac{1}{2}  \big(|x||\t{y}|+|\t{x}||y|\big) \leq \, |p^{(1)}||q^{(1)}|\,,
\]
for $p^{(1)} = (x,y)$ and $q ^{(1)}= (\t{x},\t{y})$.

\end{example}

Carnot groups are Lie groups, therefore they have also a manifold structure. Next we  briefly introduce it and we refer to  \cite[\S~4.5]{M02} 
for more details.

\begin{definition}[Sub-Riemannian structure]
\label{Sub-Riemannian structure}
On a Carnot group $\G$, the canonical left-invariant sub-Riemannian structure $(\D,g)$ is defined as follows: the horizontal distribution $\D$ (a sub-bundle of the tangent bundle $T\G$ satisfying the bracket generating condition) is generated by $\g_1$ and the metric $g$ on $\D$ is determined by $\{\X_1,\ldots, \X_m\}$. To be more precise, we have
\[
\D_p := \mathrm{span} \{\X_1(p),\ldots, \X_m(p)\} ,
\]
and $\{\X_1(p),\ldots, \X_m(p)\}$ forms an orthonormal basis at every point $p \in \G$. 
\end{definition}

An {\it horizontal path} is an absolutely continuous map $\gamma : [0,1] \to \G$ such that $\dot{\gamma}(\tau) \in \D_{\gamma(\tau)}$ for a.e. $\tau$, whose {\it length} can be calculated by 
\begin{equation}
\label{lenght}
\ell(\gamma) := \int_0^1 \sqrt{g(\dot{\gamma}(\tau),\dot{\gamma}(\tau))} \, d\tau.
\end{equation}
\begin{definition}[Carnot-Carath\'eodory distance]
\label{C-C distance}
The {\em Carnot-Carath\'eodory distance} (or in short {\it CC distance}) between two points $p, q \in \G$ is defined as
\[
d(p,q) := \inf\{\ell(\gamma)\,| \, \gamma \mbox{ horizontal}, \gamma(0) = p, \gamma(1) = q\}.
\]
\end{definition}
By the celebrated Chow-Rashevsky Theorem (see for example \cite{ABB20, M02}), $d$ is a well-defined finite distance and its induced topology is the same as the manifold topology; in particular, in Carnot groups, this means that $d$ is continuous with respect to the standard Euclidean topology. Therefore $(\G,d)$ is a metric space. We call the sub-Riemannian structure $(\D,g)$  {\it complete} if it is complete as a metric space. A {\it (length) minimizing geodesic} is a horizontal path $\gamma$ such that $\ell(\gamma) = d(\gamma(0), \gamma(1))$.

In addition, the following properties of the CC distance hold (cf. \cite[Proposition 5.2.6]{BLU07}):
\begin{equation}
\label{homo}\begin{aligned}
&d(\delta_r(p), \delta_r(q)) = r d(p,q), \quad\qquad\qquad \forall \, r > 0, p , q \in \G,\\
&d(p, q) = d(q^{-1}\cdot p,0) =d(p^{-1}\cdot q, 0), \quad\, \forall \, r > 0, p , q \in \G.
\end{aligned}
\end{equation}

In the following we use $B_E(p,r)$ and $B_{CC}(p,r)$ to denote, respectively, the Euclidean ball and the CC ball centred at $p \in \G$ with radius $r > 0$, i.e.
\begin{align}\label{ball}
B_E(p,r) := \{q \in \G\,| \, |p - q| < r\}, \quad 
B_{CC}(p,r) := \{q \in \G\,| \, d(p,q) < r\}.
\end{align}
A well-known relation between the Euclidean distance and the CC distance is as follows.

\begin{proposition}[\cite{AES85}, Proposition 1.1] \label{pcom}
On a Carnot group $\G$ with step $s$, for every compact set $K \subset \G$, there exists a constant $C(K) > 0$ such that
\[
\frac{1}{C(K)} |p - q| \le d(p,q) \le C(K) |p - q|^{\frac{1}{s}}, \quad \forall \, p,q \in K.
\] 
\end{proposition}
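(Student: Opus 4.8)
The plan is to prove the two inequalities separately, after first recording a comparison between $d(0,\cdot)$ and a homogeneous norm; this makes both estimates transparent and supplies the compactness needed to keep the constants uniform on $K$. First I would fix the homogeneous norm $\|p\| := \sum_{j=1}^s |p^{(j)}|^{1/j}$, which satisfies $\|\delta_r p\| = r\|p\|$ and whose unit sphere $S := \{\|p\| = 1\}$ is Euclidean compact and avoids $0$. Since $d$ is continuous (by Chow--Rashevsky) and a genuine distance, $d(0,\cdot)$ attains a finite positive maximum $C$ and a positive minimum $c$ on $S$. Writing an arbitrary $p \ne 0$ as $p = \delta_{\|p\|}(\delta_{1/\|p\|}p)$ and invoking the dilation homogeneity of $d$ in \eqref{homo}, I obtain $c\|p\| \le d(0,p) \le C\|p\|$ for all $p$. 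In particular $d$ is proper: since $\|\cdot\|$ is Euclidean proper, every set of bounded CC diameter is Euclidean bounded.

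For the left inequality I would argue with horizontal curves. Fix a compact $K$ and set $D := \sup_{p,q \in K} d(p,q) < \infty$. Any horizontal curve $\gamma$ joining $p,q \in K$ with $\ell(\gamma) \le D + 1$ satisfies $d(0,\gamma(\tau)) \le \sup_K d(0,\cdot) + D + 1$ for all $\tau$, so by the properness just established it stays in a fixed compact set $K'$, on which the polynomial-coefficient fields $\X_1,\dots,\X_m$ are bounded by some $M$. Writing $\dot\gamma = \sum_i a_i\, \X_i(\gamma)$ gives $|\dot\gamma| \le M\sqrt{m}\,\sqrt{g(\dot\gamma,\dot\gamma)}$ pointwise, hence the Euclidean length of $\gamma$ is at most $M\sqrt{m}\,\ell(\gamma)$. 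Since the Euclidean length dominates $|p-q|$, taking the infimum over such $\gamma$ yields $|p-q| \le M\sqrt{m}\,d(p,q)$, which is the lower bound with $C(K) \ge M\sqrt{m}$.

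For the right inequality I would pass to the origin. By \eqref{homo} we have $d(p,q) = d(0,w)$ with $w := q^{-1}\cdot p$, and the norm comparison gives $d(0,w) \le C\|w\| = C\sum_j |w^{(j)}|^{1/j}$. By \eqref{BCDH}, $w^{(j)} = (p-q)^{(j)} + \cR^{(j)}(-q,p)$, where each $\cR^{(j)}$ is a polynomial vanishing on the diagonal $\{p=q\}$ (indeed $q^{-1}\cdot q = 0$ forces $\cR(-q,q) = 0$); hence on the compact set $K$ it is Lipschitz with $|\cR^{(j)}(-q,p)| \le L|p-q|$, giving $|w^{(j)}| \le (1+L)|p-q|$. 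Because $|p-q| \le \mathrm{diam}_E(K)$ and $1/j \ge 1/s$, for each $j$ one has $|p-q|^{1/j} \le C(K)|p-q|^{1/s}$, so $|w^{(j)}|^{1/j} \le C'(K)|p-q|^{1/s}$. Summing over $j$ and enlarging the constant yields $d(p,q) \le C(K)|p-q|^{1/s}$.

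The routine parts are the Lipschitz estimates on the polynomials $\cR^{(j)}$ and the bookkeeping of the H\"older exponents. The one genuinely delicate point, and the step I would treat most carefully, is the confinement of near-minimizing horizontal curves to a fixed compact set in the proof of the lower bound: this is what makes the constant uniform over $K$, and it is exactly where the properness coming from the homogeneous-norm comparison of Step~1 is used. I would stress that this comparison does not itself rely on Proposition~\ref{pcom}, so there is no circularity.
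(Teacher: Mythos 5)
Your proof is correct, but note that the paper does not prove this proposition at all: it is quoted as a known result from \cite{AES85}, where it is established in the far more general setting of metrics associated to H\"ormander vector fields, by arguments (subunit balls, lifting, uniform local analysis) that are considerably heavier than anything in your write-up. Your argument is a legitimately different, self-contained route that exploits structure unavailable in that generality, namely the dilations $\delta_r$ and a homogeneous norm. Concretely: your Step 1 reproves the comparison $c\|p\|\le d(0,p)\le C\|p\|$, which is essentially Proposition \ref{phomo} of the paper (quoted from \cite{BLU07}), so you could shorten the argument by citing it; your Step 2 is sound --- the confinement of curves of length at most $D+1$ to a fixed Euclidean compact set via the properness from Step 1 is indeed the crux, and on that compact set the bound $|\dot\gamma|\le M\sqrt{m}\,\sqrt{g(\dot\gamma,\dot\gamma)}$ for the polynomial-coefficient frame gives the left inequality; your Step 3 is also sound, since $\cR(-q,q)=0$ makes the polynomial $(p,q)\mapsto \cR(-q,p)$ vanish on the diagonal, hence Lipschitz-controlled by $|p-q|$ on $K\times K$, and the exponent bookkeeping $|p-q|^{1/j}\le \max\big(1,\mathrm{diam}_E(K)\big)\,|p-q|^{1/s}$ for $1/j\ge 1/s$ is exactly what is needed. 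What your approach buys is an elementary, dilation-based proof valid for Carnot groups; what the paper's citation buys is the statement for general H\"ormander vector fields, where no dilation or group structure exists and your Steps 1 and 3 have no analogue.
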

It immediately implies the following local inclusions between the Euclidean balls and the CC balls, i.e. given any compact set $K \subset \G$,  if  $B_{CC}(p,r) \subset K$, then
\begin{align}\label{comb}
B_E(p,C(K)^{-s}r^s) \subset B_{CC}(p,r) \subset B_E(p,C(K)r).
\end{align}

In order to simplify our notation, we denote by $d_0(p)$ the CC distance between the point $p$ and the group identity $0$, i.e.  $d_0(p)=d(p,0)$. It is worth mentioning the following equivalence between the CC distance from the identity and the homogeneous norm.

\begin{proposition}[\cite{BLU07}, Proposition 5.1.4] \label{phomo}
Let $d$ be the CC distance of a Carnot group and $d_0=d(\cdot, 0)$. Then, there exists a constant $C \ge 1$ such that
\[
C^{-1} |p|_\G \le d_0(p) \le C |p|_\G, \quad \forall \, p \in \G,
\]
where the {\it homogeneous norm} $|\cdot|_\G$ is defined by
\begin{equation}
\label{homogeneousNorm}
|p|_\G = \left( \sum_{i = 1}^s |p^{(i)}|^{\frac{2s!}{i}}\right)^{\frac{1}{2s!}}, \qquad \forall \, p = (p^{(1)}, \ldots ,p^{(s)}) \in \G,
\end{equation}
with $s$ the step of the group $\G$ and $p^{(i)}$ associated by the exponential map to the $i$-layer $\g_i$.
\end{proposition}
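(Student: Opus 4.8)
The plan is to apply the standard principle that, on a Carnot group, any two continuous functions that are positive away from the origin and homogeneous of degree $1$ with respect to the dilations $\delta_r$ are mutually comparable; the comparison constant is produced by a compactness argument on the unit sphere of one of the two functions. Here the functions are $d_0 = d(\cdot, 0)$ and the homogeneous norm $|\cdot|_\G$ from \eqref{homogeneousNorm}.

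First I would verify the degree-one homogeneity of both. For the distance, setting $q = 0$ in the first line of \eqref{homo} and using $\delta_r(0) = 0$ gives $d_0(\delta_r(p)) = r\, d_0(p)$ for every $r > 0$. For the homogeneous norm, the identity $(\delta_r(p))^{(i)} = r^i p^{(i)}$ gives
\begin{equation*}
|\delta_r(p)|_\G = \left( \sum_{i=1}^s r^{2s!} |p^{(i)}|^{\frac{2s!}{i}} \right)^{\frac{1}{2s!}} = r\, |p|_\G .
\end{equation*}
Both functions are continuous — $d_0$ because, by the Chow-Rashevsky theorem, $d$ induces the Euclidean topology, and $|\cdot|_\G$ by inspection — and both vanish precisely at the origin. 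Next I would introduce the unit sphere $\Sigma := \{p \in \G : |p|_\G = 1\}$, argue that it is compact, and let $a := \min_{\Sigma} d_0$ and $b := \max_{\Sigma} d_0$; since $0 \notin \Sigma$ we have $0 < a \le b < \infty$. Finally, for $p \ne 0$ I would write $p = \delta_r(q)$ with $r = |p|_\G$ and $q = \delta_{1/r}(p) \in \Sigma$, so that the two homogeneity relations yield $d_0(p) = r\, d_0(q) = |p|_\G\, d_0(q)$ and hence $a\,|p|_\G \le d_0(p) \le b\,|p|_\G$. Taking $C := \max\{b, 1/a\} \ge 1$ (and noting $p = 0$ is trivial) gives the claimed two-sided bound.

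The only step that needs genuine care is the compactness of $\Sigma$. Its closedness is immediate from continuity of $|\cdot|_\G$; for boundedness I would use that $|\cdot|_\G$ is proper, which holds because every exponent $\tfrac{2s!}{i}$ is strictly positive, so $|p|_\G \to \infty$ as $|p| \to \infty$ and thus the level set $\{|p|_\G = 1\}$ stays within a Euclidean ball. With compactness established, the strict positivity of the minimum $a$ follows from $d_0 > 0$ off the origin, and the rescaling via $\delta_r$ then delivers both inequalities at once with a single constant. No finer geometric information about the group is required.
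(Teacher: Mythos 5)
Your proof is correct. The paper itself gives no argument for this proposition --- it is quoted directly from \cite{BLU07} (Proposition 5.1.4) --- and your homogeneity-plus-compactness argument is precisely the standard proof of the equivalence of homogeneous norms given in that reference: both $d_0$ and $|\cdot|_\G$ are continuous, vanish only at the identity, and are $\delta_r$-homogeneous of degree one, so comparing them on the compact set $\Sigma = \{|p|_\G = 1\}$ and rescaling yields the two-sided bound. All the steps that need care (properness of $|\cdot|_\G$ for compactness of $\Sigma$, continuity of $d_0$ via Chow--Rashevsky, strict positivity of the minimum) are handled correctly, so there is nothing to add.
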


We remark that in the special case of Heisenberg group $\H$ (see Example \ref{e1}), the homogeneous norm is 
\[
|(x,y, z)|_{\H} = \left( (x^2 + y^2)^2 + z^2 \right)^{\frac{1}{4}}, \qquad \forall \, (x,y,z) \in \H,
\]
which corresponds to \eqref{homogeneousNorm} with $s=2$, $p = (x,y,z)$, $p^{(1)}=(x,y)$ and $p^{(2)}=z$.

\subsection{Endpoint map and ideal Carnot groups}\label{sec:id-carnot}

Next we introduce the endpoint and ideal Carnot groups.
More details on this part can be found in \cite[\S~8]{ABB20} and \cite{M02, R14}.
We recall that it is usually more convenient to minimize the energy $J$ as below rather than the original length $\ell$ defined in \eqref{lenght}:
\begin{align} \label{defJ}
J(\gamma):= \frac{1}{2} \int_0^1 g(\dot{\gamma}(\tau),\dot{\gamma}(\tau)) \, d\tau.
\end{align}
In fact, we have the following relation (see e.g. \cite[Theorem 1.1.7]{M01} or \cite[Proposition 2.1]{R14}):
\begin{equation}
\label{CauchProblem}
\frac{1}{2}d^2(p,q) = \inf\{J(\gamma)\,| \, \gamma : [0,1] \to \G, \mbox{ horizontal}, \gamma(0) = p, \gamma(1) = q\}.
\end{equation}
For a fixed $p \in \G$ and any control $u \in L^2([0,1],\R^m)$, let $\gamma_u$ be the unique maximal solution of the following Cauchy problem:
\begin{equation}\label{CauchyProblem}
\dot{\gamma}_u(\tau) = \sum_{j = 1}^m u_j(\tau) \X_j(\gamma_u(\tau)), \qquad \gamma(0) = p.
\end{equation}
\begin{definition}[Endpoint map]
\label{endpointMap} We use $\U_p$ to denote the  set of $u \in L^2([0,1],\R^m)$ such that the corresponding
trajectories $\gamma_u$ solving \eqref{CauchyProblem}
starting at $p$ are defined on the interval $[0, 1]$. $\U_p$ is an open set in $L^2([0,1],\R^m)$. 
The {\it endpoint map based at $p$} is the map $\mathcal{E}_p: \U_p \to \G$ defined as
\[
\mathcal{E}_p(u) := \gamma_u(1).
\]
\end{definition}
We then obtain an {\it energy functional} on $\U_p$ given by 
\begin{equation}\label{enf}
\mathcal{J}(u) := J(\gamma_u) = \frac{1}{2} \int_0^1 |u(\tau)|^2 d\tau. 
\end{equation}
Note that $\mathcal{E}_p$ is a smooth function on $\U_p$ (cf. \cite[Appendix D]{M02}) and a length minimizing geodesic joining $p$ and $q$ is just $\gamma_u$ with $u$ minimizing $\mathcal{J}$ under the constraint $\mathcal{E}_p(u) = q$. Thus, by the method of Lagrange multipliers (see e.g. \cite[Theorem B.2]{R14} or \cite[\S~8.2]{ABB20}), for such $u$, there exists a non-trivial pair $(\lambda, \nu)$, such that
\begin{equation}
\label{nu}
\lambda \circ D \mathcal{E}_p (u) = \nu D \mathcal{J} (u), \qquad \lambda \in T_q^* \G, \ \nu \in \{0,1\}, 
\end{equation}
here $\circ$ denotes the composition and $D$ the (Fr\'echet) differential. 
\begin{definition}
[Normal and abnormal minimizing geodesic]
\label{Normal and abnormal minimizing geodesic}
Given $\nu$ introduced in \eqref{nu}, the length minimizing geodesic $\gamma_u$ is called {\em normal} if $\nu = 1$ and {\em abnormal} if
$\nu = 0$. 
\end{definition}
We remark that a minimizing geodesic could be both normal and abnormal at the same time since the pair $(\lambda, \nu)$ is not necessarily unique (see  \cite[\S~5.3.3]{M02} and also Example \ref{e2} below).  
We call an abnormal minimizing geodesic {\em trivial } if it is a constant curve.

Now we define the {\it sub-Riemannian Hamiltonian function} $H : T^* \G \to \R$ as 
\[
H(\mu) := \frac{1}{2} \sum_{j = 1}^m (\mu \circ \X_j)^2, \qquad \forall \, \mu \in T^* \G
\]
and use $\vec{H}$ to denote the {\it Hamiltonian vector field} given by $H$ with respect to the canonical symplectic structure on $T^*\G$. If $\gamma_u$ is normal, then from \cite[Proposition 8.9]{ABB20} we know that there exists a curve $\mu: [0,1] \to T^* \G$ such that $\mu$ satisfies the Hamilton equation 
\begin{align}\label{sRH}
\dot{\mu}(t) = \vec{H}(\mu(t)), \quad \forall \, t \in [0,1], 
\end{align}
$\mu(1) = \lambda$, and $\pi(\mu) = \gamma_u$, where $\lambda$ is the one in \eqref{nu} (with $\nu = 1$) and $\pi : T^*\G \to \G$ is the projection of the bundle. Such curve $\mu$ is called a {\it normal extremal}.

The next definition introduces the groups under consideration in this paper; we refer to \cite{R13,R14} for more details.
\begin{definition}[Ideal Carnot group]
Given a Carnot group $\G$, we say that $\G$ is {\it ideal} if the sub-Riemannian structure $(\D,g)$, introduced in Definition \ref{Sub-Riemannian structure}, is ideal, which by definition means that it is complete and it does not admit non-trivial abnormal minimizing geodesics.
\end{definition}
The following notion of fatness can help us check whether a Carnot group $\G$ is ideal or not. Recall that $\G$ is called {\it fat} if for every $p \in \G$ and $\X \in \D$ with $\X(p) \ne 0$, we have 
\[
\D_p + [\D,\X]_p = T_p \G.
\]
Thanks to the left invariance of the sub-Riemannian structure $(\D,g)$ on $\G$, the property above is equivalent to saying that for every $\X \in \g_1 \setminus \{0\}$, the following holds true:
\begin{align}\label{idrel}
\g_1 + [\g_1, \X] = \g.
\end{align}
By \cite[Theorem 10]{R16}, a Carnot group is ideal if and only if it is fat, which trivially implies that $\G$ is step two, i.e., $s = 2$. 

\begin{example}\label{e2}
The Heisenberg group $\H$ appearing in Example \ref{e1} is ideal. The simplest non-ideal Carnot group is $\R \times \H$. To be more precise, suppose that $\T$ is a nonzero constant vector field on $\R$ and the Lie algebra of $\R \times \H$ is given by $\g = \g_1 \oplus \g_2$ with 
\[
\g_1 = \mathrm{span} \{\T, \X_1, \X_2\}, \qquad \g_2 = \mathrm{span} \{\Y\}.
\]
Since $\T$ commutes with $\X_1,\X_2,\T$, \eqref{idrel} fails for $\X = \T$, which implies that this Carnot group is not ideal. To be more precise, we can write down explicitly a non-trivial abnormal geodesic. To show a length minimizing geodesic $\gamma_u$ is normal or abnormal, by definition we need to show \eqref{nu} with the endpoint map defined in Definition \ref{endpointMap} and the energy functional defined in \eqref{enf}. Now in our case, by considering $\X_1$, $\X_2$ given in \eqref{vectorFieldsHeisenberg} and 
$\T = \frac{\partial}{\partial w}$,
 and  writing 
 \eqref{CauchyProblem} in coordinates $(w,x,y,z)$,
 we have
\[
\dot{\gamma}_u(\tau) := \begin{pmatrix}
\dot{\gamma}_u^1(\tau) \\
\dot{\gamma}_u^2(\tau) \\
\dot{\gamma}_u^3(\tau) \\
\dot{\gamma}_u^4(\tau)
\end{pmatrix}
= \begin{pmatrix}
u_1(\tau) \\
u_2(\tau) \\
u_3(\tau) \\
\frac{1}{2}[-u_2(\tau) \gamma_u^3(\tau)+ u_3(\tau) \gamma_u^2(\tau)]
\end{pmatrix}.
\]
Assuming that we start from the group identity 0, we obtain
\[
\gamma_u^j(\tau) = \int_0^\tau u_j(s) ds, \qquad \forall\, j = 1,2,3,
\]
and 
\[
\gamma_u^4(1) = \frac{1}{2} \int_0^1 [-u_2(\tau) \gamma_u^3(\tau)+ u_3(\tau) \gamma_u^2(\tau)] d\tau 
= \frac{1}{2} \int_0^1 \int_0^\tau [-u_2(\tau) u_3(s)+ u_3(\tau) u_2(s)] ds d\tau.
\]
This gives the formula of endpoint map based at the group identity in $\R \times \H$
\[
\mathcal{E}_0(u) = \begin{pmatrix}
\int_0^1 u_1(\tau) d\tau \\
\int_0^1 u_2(\tau) d\tau \\
\int_0^1 u_3(\tau) d\tau \\
\frac{1}{2} \int_0^1 \int_0^\tau [-u_2(\tau) u_3(s)+ u_3(\tau) u_2(s)] ds d\tau
\end{pmatrix}.
\]
Taking the differential, 
we obtain
\[
D \mathcal{E}_0(u)v = \begin{pmatrix}
\int_0^1 v_1(\tau) d\tau \\
\int_0^1 v_2(\tau) d\tau \\
\int_0^1 v_3(\tau) d\tau \\
\frac{1}{2} \int_0^1 \int_0^\tau [-v_2(\tau) u_3(s)+ v_3(\tau) u_2(s) -u_2(\tau) v_3(s)+ u_3(\tau) v_2(s)] ds d\tau
\end{pmatrix}, 
\]
since we can check directly that
\[
\mathcal{E}_0(u + v) - \mathcal{E}_0(u) - D \mathcal{E}_0(u)v = o(\|v\|_{L^2})
\]
as $\|v\|_{L^2} \to 0+$.
Similarly, we can take the differential in the direction $v \in L^2([0,1],\R^3)$ for the energy $\mathcal{J}$ as \eqref{enf} and the result is
\[
D\mathcal{J}(u)v = \int_0^1 \langle u(\tau), v(\tau) \rangle d\tau.
\]
Now we consider the length minimizing geodesic $\gamma(\tau) = \gamma_u(\tau) = (\tau,0,0,0), \tau \in [0,1]$ with $u \equiv (1,0,0)$. Using the formulas above, it is not hard to check that
\[
\langle \lambda,  D\mathcal{E}_0 (u) \rangle = 0, \qquad \langle \widetilde{\lambda},  D\mathcal{E}_0(u) \rangle = D\mathcal{J}(u)
\]
with $\lambda = (0,0,0,1)$ and $\widetilde{\lambda} = (1,0,0,0)$. Here $\langle \cdot, \cdot \rangle$ is the inner product on the Euclidean space $\R^4$ inducing the norm  $|\cdot|$. As a result, $\gamma$ is a non-trivial minimizing geodesic that is both normal and abnormal.
\end{example}

In step 2 Carnot groups every abnormal minimizing geodesic needs to be also normal (see for example \cite[Theorem 2.22]{R14}) while this is not anymore true for step 3 (or higher) Carnot groups. Moreover,
Carnot groups of step  $\geq 3$ are never ideal Carnot groups because they are not fat. 
\begin{example}\label{e3}
 The {\em Engel group} $\E = \R^4 \cong \R^2 \times \R \times \R$, which is the simplest step 3 Carnot group, is not ideal. To be more precise, the Engel group can be identified with $\R^4$ with the following multiplication: given $p=(x,y,z,s),\t{p}=(\t{x},\t{y},\t{z},\t{s})\in\E$ 
\[
p\cdot \t{p}=\left( x + \t{x}, y+ \t{y}, z + \t{z} + \frac{1}{2} (x \t{y} - \t{x} y), s + \t{s} + \frac{1}{2} (x \t{z} - \t{x} z) + \frac{1}{12} (x - \t{x})(x \t{y} - \t{x} y)\right).
\]
Defining
\begin{align*}
\X_1 = \frac{\partial}{\partial x} - \frac{y}{2} \frac{\partial}{\partial z} - \left(  \frac{xy}{12}+ \frac{z}{2}\right) \frac{\partial}{\partial s}, &\qquad \X_2 = \frac{\partial}{\partial y} + \frac{x}{2} \frac{\partial}{\partial z} + \frac{x^2}{12} \frac{\partial}{\partial s}, \\
\Y = \frac{\partial}{\partial z} + \frac{x}{2} \frac{\partial}{\partial s}, & \qquad \Z = \frac{\partial}{\partial s}.
\end{align*}
The Lie algebra $\g$ of $\E$ is given by $\g = \g_1 \oplus \g_2 \oplus \g_3$ with
\[
\g_1 = \mathrm{span} \{\X_1, \X_2\}, \qquad \g_2 = \mathrm{span} \{\Y\}, \qquad \g_3 = \mathrm{span} \{\Z\}.
\]
Here the nontrivial bracket relations of $\g$ are $[\X_1, \X_2] = \Y$ and $[\X_1,\Y] = \Z$. For more details on the Engel group, we refer to \cite{AT13, AS11, AS13, AS15} and the references therein.
\end{example}

We conclude this subsection with 
the definition of several differential operators in Carnot groups, which utilizes the derivatives  along the vector fields introduced in Definition \ref{VectorFields}. The {\it horizontal gradient} of $\varphi \in C^1(\G)$ at the point $p \in \G$, denoted by $\nabla_H \varphi (p)$, is defined by
\[
\nabla_H \varphi (p) = (\X_1 \varphi (p), \ldots, \X_m \varphi (p)) \in \R^m.
\]
The {\it horizontal plane $\HH_0$ passing through the identity $0$}  is a subspace of $\G \cong \R^n$ defined by $\R^m \times \{0\} \times \ldots \times \{0\}$. It is clear that $\HH_0$ is isomorphic to $\R^m$ and thus from now on we will not distinguish $\HH_0$ from $\R^m$. 
If we use $\langle \cdot, \cdot \rangle$ to denote the inner product on the Euclidean space $\G \cong \R^n$ inducing the norm  $|\cdot|$, with these notations, it is not difficult to see that
\begin{align} \label{firt}
\langle \nabla_H \varphi (p),h \rangle = \frac{d}{d\tau}\bigg|_{\tau = 0} \varphi(p \cdot \tau h), \quad \forall \, h \in \HH_0.
\end{align}
Moreover, similar to the definition of the horizontal gradient, the (symmetrized) {\it horizontal Hessian} of $\varphi \in C^2(\G)$ at the point $p \in \G$, denoted by $(\nabla_H^2 \varphi (p))^*$, is the unique $m \times m$ symmetric matrix satisfying the following formula:
\begin{align} \label{sect}
\langle (\nabla_H^2 \varphi (p))^* h,h \rangle = \frac{d^2}{d\tau^2}\bigg|_{\tau = 0} \varphi(p \cdot \tau h), \quad \forall \,  h \in \HH_0.
\end{align}
The {\it (canonical) sub-Laplacian} at $p\in \G$ is thus defined by $\Delta_H u (p)= \sum_{i = 1}^m \X_i^2 u(p)$.

\subsection{H-concavity and h-semiconcavity}\label{sec:h-concavity}

On Carnot groups it is possible to introduce several notions of convexity/concavity. However, some of them exhibit rather unusual behavior in this sub-Riemannian setting. For example, in \cite{MR05}, the authors showed that geodetical convexity is not a suitable notion on Heisenberg group in the sense that the family of geodetically convex sets only consist of the whole group, the empty set and the geodesics, and as a consequence the only geodetically convex functions in the Heisenberg group are the constant functions. Furthermore, strong H-convexity is considered in \cite{DGN03, CCP07}, and it turns out that it is still a too restrictive notion. 
A notion of horizontal convexity, shortly h-convexity, was introduced,  independently, by Lu, Manfredi and Stroffolini \cite{LMS03}, for the Heisenberg group, and by Danielli, Garofalo and Nhieu  \cite{DGN03} in more general Carnot groups. Later the notion has been discussed in various papers e.g. \cite{BR03,  GT06, M06, M062, SY06, JLMS07}. 
In more recent years h-convexity has been applied to study properties of solutions for certain classes of PDEs \cite{LMZ16, LZ21, LZZ23}. Concerning how h-convexity can be applied to sets, see \cite{DF19} for an overview and \cite{KLZ24, KLZZ24} for further applications related to nonlinear PDEs. The authors of \cite{BD11, BD14} extended the notion of  the h-concavity the setting of vector fields and further studied the notion of h-semiconcavity, which is the key topic of the present work.\\

For the purpose of this paper, we introduce directly the notion of h-concavity, holding the usual relation that $u$ is h-convex if and only if $-u$ is h-concave.
\begin{definition}[H-concavity]
\label{h-concavity}
Given an open set $\Omega \subset \G$, a function $u \in LSC(\Omega)$ (i.e. the function is lower semicontinuous in $\Omega$)
 is called  {\em h-concave} if and only if
\begin{equation}\label{defhc}
\begin{aligned}
u(p \cdot h)  + & u(p \cdot h^{-1}) - 2 u(p) \le 0, \\
& \forall \, p \in \Omega, h \in \HH_0 \text{ such that $[p \cdot h^{-1}, p \cdot h] := \{p \cdot \tau h \, | \, \tau \in [-1,1]\} \subset \Omega$.
}
\end{aligned}
\end{equation}
\end{definition}

\begin{remark}
Note that for Definition \ref{h-concavity} to hold true, one could relax the assumption of lower semicontinuity.
 Nevertheless, since later we use the viscosity characterization of h-concave functions,  we ask such regularity directly in the definition. 
This definition follows from \cite{LMS03} and \cite{DGN03}, where the notion is stated in a slightly different form, but these formulations coincide with ours for LSC functions; see \cite{JLMS07} for details.
\end{remark}
Similarly to the standard Euclidean characterization established first by Alvarez, Lasry and Lions in 1997 \cite{ALL97}, the  h-concavity of a function can be characterized by the sign of its horizontal Hessian in the viscosity sense: this characterization was first introduced for the Heisenberg group
in \cite{LMS03} and later proved in Carnot groups in \cite{{W05},{JLMS07}}. See also  \cite{BD11} for the more general case of Carnot-type H\"ormander vectors fields. 
 
Combining the results from the above mentioned papers, we have 
\begin{equation}\label{vis-char}
u\; \textrm{is}\; \textrm{h-concave in $\Omega$, if and only if,}\;
-(\nabla_H^2 u)^* \ge 0 \;\textrm{in $\Omega$ holds in the viscosity sense}.
\end{equation}
 To be more precise, the viscosity inequality means that $-(\nabla_H^2 \varphi(p))^* \ge 0$ whenever there exist $\varphi \in C^2(\Omega)$ and $p \in \Omega$ such that $u - \varphi$ has a local minimum at $p$. 
 
\begin{example}
On the Heisenberg group $\H$ in Example \ref{e1}, we have $\X_1 = \partial_x - \frac{y}{2} \partial_z$,  $\X_2 = \partial_y + \frac{x}{2} \partial_z$, and the horizontal Hessian can be represented by
\[
 (\nabla_H^2 \varphi )^* = \begin{pmatrix}
\X_1^2 \varphi &  \frac{1}{2} (\X_1 \X_2 \varphi+ \X_2 \X_1 \varphi)  \\
\frac{1}{2} (\X_1 \X_2 \varphi+ \X_2 \X_1 \varphi) & \X_2^2 \varphi
\end{pmatrix}
\]
with
\begin{align}\label{HeHe}
\X_1^2 = \partial_{xx} - y \partial_{xz} + \frac{y^2}{4} \partial_{zz}, \quad 
\X_2^2 = \partial_{yy}  + x \partial_{xz} + \frac{x^2}{4} \partial_{zz}, \\
\label{HeHe2}
\frac{1}{2} (\X_1 \X_2 + \X_2 \X_1 ) = \partial_{xy} - \frac{y}{2} \partial_{yz} + \frac{x}{2} \partial_{xz} - \frac{xy}{4} \partial_{zz}.
\end{align}
In this case, one can easily verify that every Euclidean concave function in $\R^3$ is also h-concave in $\H$. The reverse however is not true, as shown in \cite{DGN03, LMZ16, DF19}.
\end{example}

Next we recall the property under investigation in this paper.
\begin{definition}[H-semiconcavity]
Given an open set $\Omega \subset \G$, we call a function $u \in LSC(\Omega)$  {\em h-semiconcave} if there exists a constant $C \ge 0$ such that
\begin{align}\label{defsc}
u(p \cdot h) + u(p \cdot h^{-1}) - 2 u(p) \le  C |h|^2, \quad \forall \, p \in \G, h \in \HH_0 \ \mbox{such that} \ [p \cdot h^{-1}, p \cdot h] \subset \Omega,
\end{align}
where we recall that $|\cdot|$ is the Euclidean norm on $\G \cong \R^n$. The constant $C$ is called {\it h-semiconcavity constant}. 
\end{definition}
This is a generalization of the notion of semiconcave functions in the Euclidean space, for which we refer to \cite{CS04}. We also refer the reader to \cite{BR18} for another notion of horizontal semiconcavity called $C$-nearly horizontal semiconcavity, which will also be used later; see Definition \eqref{Cnhs} for a precise definition. In Proposition \eqref{propCnhs}, we will also discuss the relation between these two notions of horizontal semiconcavity. 

A function $u$ is called {\it h-semiconvex} it $-u$ is h-semiconcave. 
Similarly to the characterization in \eqref{vis-char} for h-concave functions, the notion of h-semiconcavity can be characterized by a bound for the horizontal Hessian in the viscosity sense.
\begin{theorem}[Proposition 5.1 of \cite{BD11}]
\label{tp2}
Given an open set $\Omega \subset \G$ and $u\in LSC(\Omega)$, the follow statements are equivalent:
\begin{enumerate}

\item $u$ is h-semiconcave in $\Omega$ with h-semiconcavity constant $C\geq 0$.
\item  We have 
\begin{equation}
\label{viscosityCharacterization}
-(\nabla_H^2 u)^* \ge -C\; \I_m \quad \textrm{in $\Omega$ in the viscosity sense,}
\end{equation}
which means that $-(\nabla_H^2 \varphi(p))^* \ge -C \;\I_m$ whenever there exist  $\varphi \in C^2(\Omega)$ and $p \in \Omega$ such that $u - \varphi$ has a local minimum at $p$. Here $\I_m$ denotes the $m \times m$ identity matrix.

\end{enumerate}
\end{theorem}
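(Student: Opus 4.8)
The plan is to reduce both implications to the already-established viscosity characterization \eqref{vis-char} of h-concavity, by subtracting from $u$ a fixed smooth function whose horizontal Hessian equals exactly $C\I_m$. Concretely, I would set $g(p) := \tfrac{C}{2}|p^{(1)}|^2$ for $p = (p^{(1)},\dots,p^{(s)}) \in \G \cong \R^n$ and put $w := u - g$. Two properties of $g$ should be recorded first. Since the remainder $\cR$ in \eqref{BCDH} has vanishing first layer, one has $(p\cdot h)^{(1)} = p^{(1)} + h$ for every $h \in \HH_0 \cong \R^m$; hence the parallelogram law gives the exact identity
\[
g(p\cdot h) + g(p\cdot h^{-1}) - 2g(p) = \tfrac{C}{2}\big(|p^{(1)}+h|^2 + |p^{(1)}-h|^2 - 2|p^{(1)}|^2\big) = C|h|^2.
\]
Differentiating $\tau \mapsto g(p\cdot\tau e) = \tfrac{C}{2}|p^{(1)}+\tau e|^2$ twice at $\tau=0$ and using \eqref{sect} shows $\langle (\nabla_H^2 g(p))^* e,e\rangle = C$ for every unit $e$, so $(\nabla_H^2 g(p))^* = C\I_m$ classically at each $p$.

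The algebraic equivalence is then immediate: subtracting the displayed identity from \eqref{defsc} shows that $u$ satisfies \eqref{defsc} with constant $C$ if and only if $w$ satisfies \eqref{defhc}, i.e.\ $w$ is h-concave in $\Omega$. As $g$ is continuous, $w \in LSC(\Omega)$ whenever $u \in LSC(\Omega)$, so the characterization \eqref{vis-char} applies to $w$: it is h-concave if and only if $-(\nabla_H^2 w)^* \ge 0$ in $\Omega$ in the viscosity sense.

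It then remains to transfer this viscosity inequality for $w$ back to the inequality \eqref{viscosityCharacterization} for $u$, and here the smoothness of $g$ is essential. If $\varphi \in C^2(\Omega)$ and $u-\varphi$ attains a local minimum at $p$, then $\psi := \varphi - g \in C^2(\Omega)$ and $w-\psi = u-\varphi$ attains a local minimum at the same point, and conversely; moreover $(\nabla_H^2\psi(p))^* = (\nabla_H^2\varphi(p))^* - C\I_m$ by the linearity of \eqref{sect}. Feeding this correspondence through the two viscosity definitions shows that $-(\nabla_H^2 w)^* \ge 0$ holds in the viscosity sense if and only if $-(\nabla_H^2 u)^* \ge -C\I_m$ does. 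Chaining the three equivalences then yields $(1)\Leftrightarrow(2)$.

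The one delicate point to watch is exactly this last translation: viscosity second-order inequalities cannot in general be added, and the step is legitimate only because $g$ is a single fixed smooth function, so that testing $w$ at $p$ with $\psi$ is identical to testing $u$ at $p$ with $\psi+g$. Were the characterization \eqref{vis-char} not available, the genuinely hard direction would be $(2)\Rightarrow(1)$, requiring an Alvarez--Lasry--Lions-type regularization (e.g.\ by sup-convolution) to promote pointwise second-order viscosity information to the global second-difference bound \eqref{defsc}; the reduction above sidesteps this by importing it from the h-concave case.
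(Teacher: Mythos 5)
Your proof is correct. There is, however, nothing in the paper to compare it against: the statement is imported from \cite{BD11} (Proposition 5.1) without an internal proof, so yours is a reconstruction of the cited result rather than an alternative to an argument the authors give. Your reduction — subtract $g(p)=\tfrac{C}{2}|p^{(1)}|^2$ and invoke the h-concavity characterization \eqref{vis-char} — is sound at every step: since $\cR^{(1)}=0$ in \eqref{BCDH}, the first-layer components satisfy $(p\cdot h)^{(1)}=p^{(1)}+h$ and $(p\cdot h^{-1})^{(1)}=p^{(1)}-h$ for $h\in\HH_0$, so your parallelogram identity $g(p\cdot h)+g(p\cdot h^{-1})-2g(p)=C|h|^2$ holds exactly and \eqref{defsc} for $u$ with constant $C$ is equivalent to \eqref{defhc} for $w=u-g$; the quadratic-form definition \eqref{sect} gives $(\nabla_H^2 g)^*\equiv C\,\I_m$ pointwise; $w\in LSC(\Omega)$ because $g$ is continuous; and the test-function correspondence $\varphi\leftrightarrow\varphi-g$ is legitimate precisely because $g$ is a single fixed smooth function, the point you rightly flag as the only delicate one. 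It is worth noting that the paper itself implicitly relies on exactly this decomposition elsewhere: in the proof of Proposition \ref{propCnhs}, part (1), it passes from an h-semiconcave $f$ to the h-concave $f_{C_0}(p)=f(p)-C_0|p^{(1)}|^2$, which is your equivalence between statement (1) and h-concavity of $w$ (with $C_0=C/2$). What your argument buys is self-containedness: given \eqref{vis-char}, which the paper also quotes from the literature, Theorem \ref{tp2} no longer needs the external citation, and the h-semiconcavity constant is visibly preserved in both directions of the equivalence.
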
 

The result below is a direct consequence of Theorem \ref{tp2} and the stability of viscosity supersolutions with respect to the infimum. It will be useful in our later applications. Below we denote by $LSC(A)$ the set of lower semicontinuous functions in a set $A$ of a metric space. 
\begin{proposition}\label{pinf}
Let $\Omega$ be an open set of a Carnot group $\G$. Let $\{u_\alpha\}_{\alpha \in \mathcal{A}}$ be a family of h-semiconcave functions on $\Omega$. Assume that for every $\alpha \in \mathcal{A}$, the function $u_\alpha$ is h-semiconcave functions in $\Omega$ with h-semiconcavity constant $C\ge 0$ independent of $\alpha$. Suppose that
\[
u(p) := \inf_{\alpha \in \mathcal{A}} u_\alpha(p) > -\infty\quad \text{for all $p\in \Omega$}.
\]
If $u\in LSC(\Omega)$, then $u$ is also h-semiconcave in $\Omega$ with the same h-semiconcavity constant $C\geq 0$.
\end{proposition}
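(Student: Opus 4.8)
The plan is to verify the defining inequality \eqref{defsc} for $u=\inf_{\alpha}u_\alpha$ directly from the definition, exploiting the asymmetry between the two ``outer'' terms $u(p\cdot h),\,u(p\cdot h^{-1})$ and the ``central'' term $-2u(p)$. Fix $p\in\G$ and $h\in\HH_0$ with $[p\cdot h^{-1},p\cdot h]\subset\Omega$; then $p$, $p\cdot h$ and $p\cdot h^{-1}$ all lie in $\Omega$ (they are the values of the segment at $\tau=0,1,-1$), so every $u_\alpha$ and $u$ is defined there, and $u(p)>-\infty$ by hypothesis. Given $\varepsilon>0$, I would choose a near-optimal index $\gamma=\gamma(\varepsilon)\in\A$ with $u_\gamma(p)<u(p)+\varepsilon$. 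The key point is that near-optimality is needed only at the central point $p$, where the coefficient is $-2$; at the two outer points the crude bounds $u(p\cdot h)\le u_\gamma(p\cdot h)$ and $u(p\cdot h^{-1})\le u_\gamma(p\cdot h^{-1})$, which hold simply because $u$ is the infimum, already suffice.

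Combining these estimates with the h-semiconcavity of the single function $u_\gamma$ (whose constant $C$ is independent of the index) gives
\begin{align*}
u(p\cdot h)+u(p\cdot h^{-1})-2u(p)
&\le u_\gamma(p\cdot h)+u_\gamma(p\cdot h^{-1})-2\big(u_\gamma(p)-\varepsilon\big)\\
&= \big[u_\gamma(p\cdot h)+u_\gamma(p\cdot h^{-1})-2u_\gamma(p)\big]+2\varepsilon
\le C|h|^2+2\varepsilon.
\end{align*}
Letting $\varepsilon\to0^+$ yields $u(p\cdot h)+u(p\cdot h^{-1})-2u(p)\le C|h|^2$, which is exactly \eqref{defsc}. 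Since $u\in LSC(\Omega)$ by assumption, $u$ is a genuine h-semiconcave function with the same constant $C$.

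In this direct form there is no serious obstacle: the only subtlety is that the three infima defining $u(p\cdot h)$, $u(p\cdot h^{-1})$ and $u(p)$ are a priori attained along different indices, and the device above — being near-optimal only at the central point and using $u\le u_\gamma$ at the two outer points — resolves it cleanly. An alternative route, matching the viscosity language in which the statement is phrased, is to invoke Theorem \ref{tp2}: each $u_\alpha$ is then a viscosity supersolution of $-(\nabla_H^2 u)^*+C\,\I_m\ge0$, and the infimum of viscosity supersolutions is again a supersolution as long as it remains lower semicontinuous, whence Theorem \ref{tp2} returns the h-semiconcavity of $u$. This second approach is conceptually appealing but less self-contained, since the infimum-stability of supersolutions still requires the standard localization argument (minimizing $u_{\alpha_k}-\varphi$ on a small closed ball around the contact point and passing to the limit); for that reason I would present the elementary direct argument above as the main proof and relegate the viscosity route to a remark.
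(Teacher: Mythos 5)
Your direct argument is correct: choosing the near-optimal index $\gamma$ only at the central point $p$ (where the coefficient $-2$ has the unfavorable sign) and using the trivial bounds $u\le u_\gamma$ at the two outer points (where the coefficients are $+1$) yields exactly the defining inequality \eqref{defsc} with the same constant $C$, and the hypotheses $u>-\infty$ and $u\in LSC(\Omega)$ are precisely what is needed for the estimate to make sense and for $u$ to qualify as h-semiconcave. This is, however, a genuinely different route from the paper's: the paper establishes the proposition by what you relegate to a remark, namely it treats the statement as a direct consequence of Theorem~\ref{tp2} together with the standard stability under infima of viscosity supersolutions of $-(\nabla_H^2 u)^* + C\,\I_m \ge 0$, and it gives no further details. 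The comparison cuts both ways. The paper's route is economical within its own framework, since Theorem~\ref{tp2} has already been proved and infimum-stability of supersolutions is standard; but, as you observe, it silently relies on the localization argument and uses the LSC hypothesis in an essential way (the infimum must remain lower semicontinuous for the stability statement to apply). Your definitional proof is more elementary and self-contained, makes transparent why the constant $C$ is preserved, and clarifies that lower semicontinuity plays no role in the inequality itself --- it enters only because the paper's Definition of h-semiconcavity requires $u\in LSC(\Omega)$. Either write-up is acceptable; yours is arguably preferable as a stand-alone proof, while the paper's is natural given the viscosity-theoretic machinery it has already set up.
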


\section{H-semiconcavity of squared CC distance in ideal Carnot groups}\label{sec:pf}
\setcounter{equation}{0}

In this section, we present our main result in the case of ideal Carnot groups. 
\begin{theorem} \label{t1}
Let $\G$ be an ideal Carnot group with CC distance $d$. Then $d^2(\cdot, 0)$ is h-semiconcave in $\G$.
\end{theorem}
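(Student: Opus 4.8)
The plan is to deduce a \emph{uniform} h-semiconcavity estimate on $\G\setminus\{0\}$ from the known off-diagonal regularity together with the degree-two homogeneity of $d_0^2:=d^2(\cdot,0)$, and then to recover the estimate at the identity by an approximation argument. The key structural inputs are that for an ideal group the abnormal set is exactly $\{0\}$, so that \cite{CR08,FR10} apply on the whole of $\G\setminus\{0\}$, and that $d_0^2\circ\delta_r=r^2 d_0^2$, so that dilations act on everything in a scale-invariant way.

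First I would pass from Euclidean to horizontal semiconcavity away from the origin. Since $\G$ is ideal, its abnormal set is $\{0\}$, so by \cite{CR08,FR10} the function $d_0$, and hence $d_0^2$ (positive and locally Lipschitz there), is locally \emph{Euclidean} semiconcave on $\G\setminus\{0\}$. Using the step $2$ law \eqref{BCDH2}, for $p\in\G$ and $h\in\HH_0$ the curve $\tau\mapsto p\cdot\tau h$ is affine, namely $p\cdot\tau h=p+\tau v$ with $v=h+(0,\mathcal{B}(p^{(1)},h))$; in particular $p\cdot h$ and $p\cdot h^{-1}$ are Euclidean-symmetric about $p$, and by \eqref{C_0} one has $|v|^2\le(1+C_0^2|p^{(1)}|^2)|h|^2$. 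Reading this through \eqref{sect}, a Euclidean bound $D^2\varphi\le C\,\I_n$ gives $(\nabla_H^2\varphi(p))^*\le C(1+C_0^2|p^{(1)}|^2)\,\I_m$. Hence, on any bounded region, Euclidean semiconcavity forces h-semiconcavity, and so $d_0^2$ is locally h-semiconcave on $\G\setminus\{0\}$.

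Next I would transport this to a scale-invariant global bound. Fix the open annulus $A:=\{q:\tfrac12<|q|_\G<4\}$, whose closure is a compact subset of $\G\setminus\{0\}$ by Proposition \ref{phomo}. By the viscosity characterization of Theorem \ref{tp2} and compactness of $\overline A$, local h-semiconcavity yields a single constant $C_0\ge0$ with $-(\nabla_H^2 d_0^2)^*\ge-C_0\,\I_m$ in the viscosity sense on $A$. The crucial observation is that this constant is dilation-invariant: since $d_0^2(\delta_r q)=r^2 d_0^2(q)$ by \eqref{homo}, $\delta_r$ maps horizontal segments to horizontal segments and $\delta_r h=rh$ on $\HH_0$, the second difference at $(\delta_r q,\,rk)$ equals $r^2$ times the one at $(q,k)$, while $C_0|rk|^2=r^2C_0|k|^2$. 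Therefore $d_0^2$ is h-semiconcave with the same constant $C_0$ on each $\delta_r(A)=\{r/2<|\cdot|_\G<4r\}$. As $\bigcup_{r>0}\delta_r(A)=\G\setminus\{0\}$ (take $r=|p|_\G$ for a given $p\neq0$), the pointwise viscosity inequality $-(\nabla_H^2 d_0^2)^*\ge-C_0\,\I_m$ holds on all of $\G\setminus\{0\}$, and Theorem \ref{tp2} returns h-semiconcavity with constant $C_0$ there.

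Finally I would include the identity by approximation, which I expect to be the genuine obstacle since this is exactly the point the cited results do not reach. It remains to verify \eqref{defsc} for pairs $(p,h)$ whose segment $[p\cdot h^{-1},p\cdot h]$ meets $0$; writing $F(p,h):=d_0^2(p\cdot h)+d_0^2(p\cdot h^{-1})-2d_0^2(p)$, this function is continuous in $(p,h)$. Because $p\cdot\tau h=0$ forces $p=-\tau h\in\HH_0$, the segment avoids $0$ whenever $p\notin\HH_0$. Choosing a second-layer perturbation $p_\varepsilon:=p+\varepsilon e$, with $e$ a fixed unit vector in the second layer, gives $p_\varepsilon\notin\HH_0$ for all small $\varepsilon\neq0$, so $[p_\varepsilon\cdot h^{-1},p_\varepsilon\cdot h]\subset\G\setminus\{0\}$ and the previous step yields $F(p_\varepsilon,h)\le C_0|h|^2$; letting $\varepsilon\to0$ and using continuity gives $F(p,h)\le C_0|h|^2$. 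This covers every remaining case, including $p=0$, where one checks directly that $d_0(h)\le|h|$ for $h\in\HH_0$ (the straight line $\tau\mapsto\tau h$ is horizontal of Euclidean length $|h|$), so that $F(0,h)=2d_0^2(h)\le2|h|^2$, consistent with $C_0\ge2$. Enlarging $C_0$ if necessary, this establishes h-semiconcavity of $d_0^2$ on all of $\G$. The only place ideality is used is in guaranteeing that the excluded abnormal set is precisely $\{0\}$; it is this that makes both the off-origin regularity and the single-point approximation available, and its failure in the general step $2$ setting is what forces the more delicate tools of Theorem \ref{t2}.
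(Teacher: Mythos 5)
Your proof is correct, and its main body follows the same path as the paper's: ideality forces the abnormal set to be $\{0\}$, so \cite{CR08,FR10} give local Euclidean semiconcavity of $d_0^2$ on $\G\setminus\{0\}$; the step-2 law \eqref{BCDH2} makes $\tau\mapsto p\cdot\tau h$ a Euclidean segment with direction $v=h+\cR(p,h)$ controlled via \eqref{C_0}, which converts Euclidean second-difference bounds into horizontal ones (this is exactly the content of Lemma \ref{l1} and Case 1 of the paper, with your homogeneous-norm annulus playing the role of the covered sphere $\partial B_{CC}(0,1)$); and the scaling $d_0^2\circ\delta_r=r^2 d_0^2$ spreads one uniform constant over all of $\G\setminus\{0\}$, as in the paper's Case 2. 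The genuine divergence is at the identity, which is the crux of the theorem. The paper concludes through the purely local claim \eqref{locp} and the viscosity characterization of Theorem \ref{tp2}, so segments through the origin never have to be considered; but this forces a separate direct estimate at $p=0$, which the paper obtains from the homogeneous-norm comparison of Proposition \ref{phomo} (its Case 3). You instead verify the definition \eqref{defsc} for every admissible pair: since any pair whose segment meets $0$ forces $p\in\HH_0$, it is a limit of pairs $(p+\varepsilon e,h)$ whose segments avoid the origin, and the inequality with the uniform constant $C_0$ passes to the limit by continuity of $d_0$. Your route buys a small economy --- no appeal to Proposition \ref{phomo}, and no separate estimate at the origin at all --- but it leans essentially on the \emph{uniformity} of the constant on $\G\setminus\{0\}$ (with a merely local constant degenerating near $0$, the limiting argument would collapse), and on the structural fact that the ``bad'' pairs form a thin set perturbable in the vertical direction. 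Both of your invocations of Theorem \ref{tp2} (to extract one constant on the compact annulus, and to convert the pointwise viscosity bound on the non-convex set $\G\setminus\{0\}$ back into the segment inequality there) are legitimate, since that equivalence holds on arbitrary open sets. One cosmetic flaw: you use $C_0$ both for the structural constant in \eqref{C_0} and for your uniform h-semiconcavity constant; rename one of them.
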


 One of the tools we will use is  the local (Euclidean) semiconcavity studied in \cite{CR08}. Recall that a function $u$ is {\it locally semiconcave} in an open set $\Omega$ if for every compact convex set $K \subset \Omega$, there exists a constant $C(K) \ge 0$ such that the following holds:
\begin{align}\label{lsemiconc}
\lambda u(p) + (1 - \lambda) u(q) - u(\lambda p + (1 - \lambda) q) \le  \lambda (1 - \lambda) 
 C(K) |p - q|^2, \quad \forall \, p , q \in K, \lambda \in [0,1]. 
\end{align}
Here the constant $2C(K)$ is called the {\it semiconcavity constant on compact set $K$}. Note that the definition is independent of the choice of the norm $|\cdot|$,  since, in the Euclidean setting, different norms are equivalent up to a multiplicative constant. Hereafter let us just use the standard norm on  $ \R^n$. 
By definition an ideal Carnot group only possesses trivial abnormal minimizing geodesics. Consequently, the abnormal set of the identity $0$, which is just the set of endpoints of abnormal minimizing geodesics starting from the identity $0$, must be $\{0\}$. It follows from \cite[Theorem 1]{CR08} (and also \cite[Theorem 5.9]{FR10}) that $d^2(\cdot, 0)$ is locally (Euclidean) semiconcave on $\G \setminus \{0\}$.  The following lemma, which will be useful in the proof of our main result, is a direct consequence.   

\begin{lemma}\label{l1}
Let $\G$ be an ideal Carnot group with CC distance $d$. Then, for $d_0=d(\cdot, 0)$,  there exist two constants $C \ge 0$ and $c > 0$ such that 
\[
 d_0^2(p + v) +   d_0^2(p - v) - 2 d_0^2(p) \le C |v|^2, \quad \forall \, p \in \partial B_{CC}(0,1), \  |v| \le c.
\]
\end{lemma}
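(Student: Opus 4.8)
The plan is to upgrade the local Euclidean semiconcavity of $d_0^2$ on $\G \setminus \{0\}$, recalled just above from \cite{CR08, FR10}, into a \emph{uniform} second-difference bound on the CC unit sphere $\partial B_{CC}(0,1)$. The only real issue is that local semiconcavity, as stated in \eqref{lsemiconc}, comes with a constant $C(K)$ depending on the compact convex set $K$; I must produce a single constant $C$ and a single radius $c$ that work simultaneously at every base point $p$ of the sphere. First I would pass from the convexity-defect form \eqref{lsemiconc} to the symmetric form in the statement: taking $\lambda = \tfrac12$ and the two points $p+v$, $p-v$ (whose midpoint is $p$) gives, on any compact convex $K$ containing the segment $[p-v,p+v]$,
\[
\tfrac12 d_0^2(p+v) + \tfrac12 d_0^2(p-v) - d_0^2(p) \le \tfrac14 C(K)\,|(p+v)-(p-v)|^2 = C(K)\,|v|^2,
\]
so that doubling yields $d_0^2(p+v)+d_0^2(p-v)-2d_0^2(p)\le 2C(K)|v|^2$.

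Next I would record that $S := \partial B_{CC}(0,1)$ is a compact subset of $\G \setminus \{0\}$. By Proposition \ref{phomo} the CC sublevel set $\{d_0 \le 1\}$ is contained in $\{\,|\cdot|_\G \le C\,\}$, and the latter is Euclidean-bounded since $|p^{(i)}| \le C^{i}$ there; as $d_0$ is continuous for the Euclidean topology, $S = d_0^{-1}(\{1\})$ is closed and bounded, hence compact, and obviously $0 \notin S$. Therefore $\rho := \mathrm{dist}_E(0,S) > 0$.

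Now I would run a finite-covering argument. For each $p \in S$ the closed Euclidean ball $\overline{B_E(p,\rho/4)}$ is compact, convex, and contained in $\G \setminus \{0\}$ (every point of it lies within $\rho/4 < \rho$ of $S$, hence at Euclidean distance $\ge 3\rho/4$ from the origin), so local semiconcavity supplies a constant $C_p \ge 0$ valid on $K = \overline{B_E(p,\rho/4)}$. The open balls $\{B_E(p,\rho/8)\}_{p\in S}$ cover the compact set $S$; extracting a finite subcover centred at $p_1,\dots,p_N$, I set $c := \rho/8$ and $C := 2\max_{1\le i\le N} C_{p_i}$. Given any $p \in S$ and $|v|\le c$, choose $i$ with $p \in B_E(p_i,\rho/8)$; then $|p\pm v - p_i| < \rho/8 + \rho/8 = \rho/4$, so the entire segment $[p-v,p+v]$ lies in the convex set $\overline{B_E(p_i,\rho/4)}$, and applying the symmetric estimate above on $K=\overline{B_E(p_i,\rho/4)}$ gives $d_0^2(p+v)+d_0^2(p-v)-2d_0^2(p) \le 2C_{p_i}|v|^2 \le C|v|^2$, as desired.

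I expect the obstacle here to be conceptual rather than computational: one cannot simply feed the whole sphere into \eqref{lsemiconc}, because $S$ is not Euclidean-convex and its convex hull contains the origin, precisely the point at which semiconcavity of $d_0^2$ fails. The finite-covering by small convex balls uniformly bounded away from $0$ — available because $S$ is compact with $\rho = \mathrm{dist}_E(0,S)>0$ — is exactly what localizes the argument away from the singular point and delivers constants $C$ and $c$ independent of the base point on the sphere.
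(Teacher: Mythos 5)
Your proposal is correct and follows essentially the same route as the paper: both use the local Euclidean semiconcavity of $d_0^2$ on $\G\setminus\{0\}$ from \cite{CR08}, specialize \eqref{lsemiconc} with $\lambda=\tfrac12$ at the points $p\pm v$, and run a finite-covering argument over the compact CC unit sphere with small convex Euclidean balls uniformly bounded away from the origin, taking the maximum of the finitely many local constants. The only cosmetic difference is that you justify compactness of $S$ and its positive Euclidean distance to $0$ via the homogeneous-norm comparison (Proposition \ref{phomo}), while the paper invokes the ball inclusions \eqref{comb}; the radii ($\rho/8,\rho/4$ versus $\eta/4,\eta/2$) also differ immaterially.
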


\begin{proof}
Set $S = \partial B_{CC}(0,1)$ and $\eta = \inf_{q \in S} |q|$, that is,  $\eta$ is the Euclidean distance between the origin and the boundary of the unit CC ball. 
By \eqref{comb} with  $K = \overline{B_{CC}(0,1)}$, 
it is easy to see that $\eta>0$. Thus, for every 
$q \in S$, the compact set $\overline{B_E(q,\eta/2)} \subset \G \setminus \{0\}$;  see Figure \ref{fig2}.

\begin{figure}[htp]
	\centering
  \begin{overpic}[scale = 2.8]{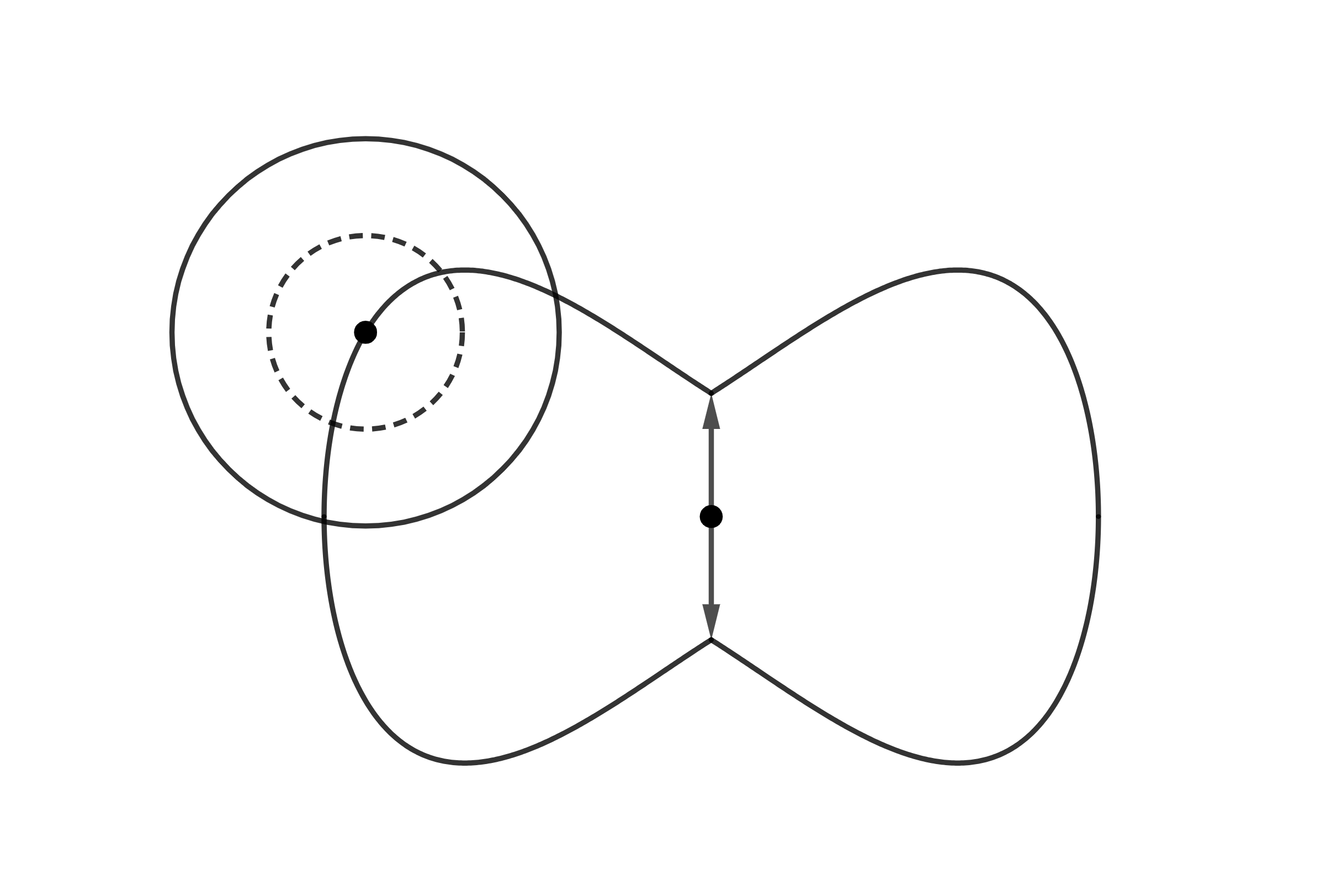}
       \put(19.5,50.5){$B_E(q,\eta/4)$}
       \put(19.5,58){$\overline{B_E(q,\eta/2)}$}
        \put(24.5,44){$q$}
        \put (55, 33){$\eta$}
        \put (55, 27){$0$}
        \put (55, 23){$\eta$}
        \put (63, 50) {$S = \partial B_{CC}(0,1)$}
   \end{overpic}
	\caption[image]{The CC ball $B_{CC}(0, 1)$ and the Euclidean ball $B_E(q, \eta/2)$}\label{fig2}
\end{figure}
It follows from the local (Euclidean) semiconcavity of $d_0^2$ on $\G \setminus \{0\}$ (see \cite[Theorem 1]{CR08}) that there exists a $C(q,\eta) \ge 0$ such that
\begin{equation}
\label{lambdaEq}
\begin{aligned}
\lambda d_0^2(p_+) + (1 - \lambda) d_0^2(p_-) - d_0^2(\lambda p_+ + (1 - \lambda) p_-) \le & \  \lambda (1-\lambda)C(q,\eta) |p_+-p_-|^2, \\ & \quad \forall \, p_+ , p_- \in \overline{B_E(q,\eta/2)}, \lambda \in [0,1]. 
\end{aligned}
\end{equation}
As a result, choosing $p_+= p + v$, $p_-= p - v$,  and $\lambda = \frac{1}{2}$, we apply \eqref{lambdaEq} to deduce 
\[
d_0^2(p + v) + d_0^2(p - v) - 2 d_0^2(p) \le 2 C(q,\eta)  |v|^2, \quad \forall \, p \in B_E(q,\eta/4), |v| \le \eta/4.
\]
See Figure \ref{fig0}.

\begin{figure}[htp]
	\centering
    \begin{overpic}[scale = 0.34]{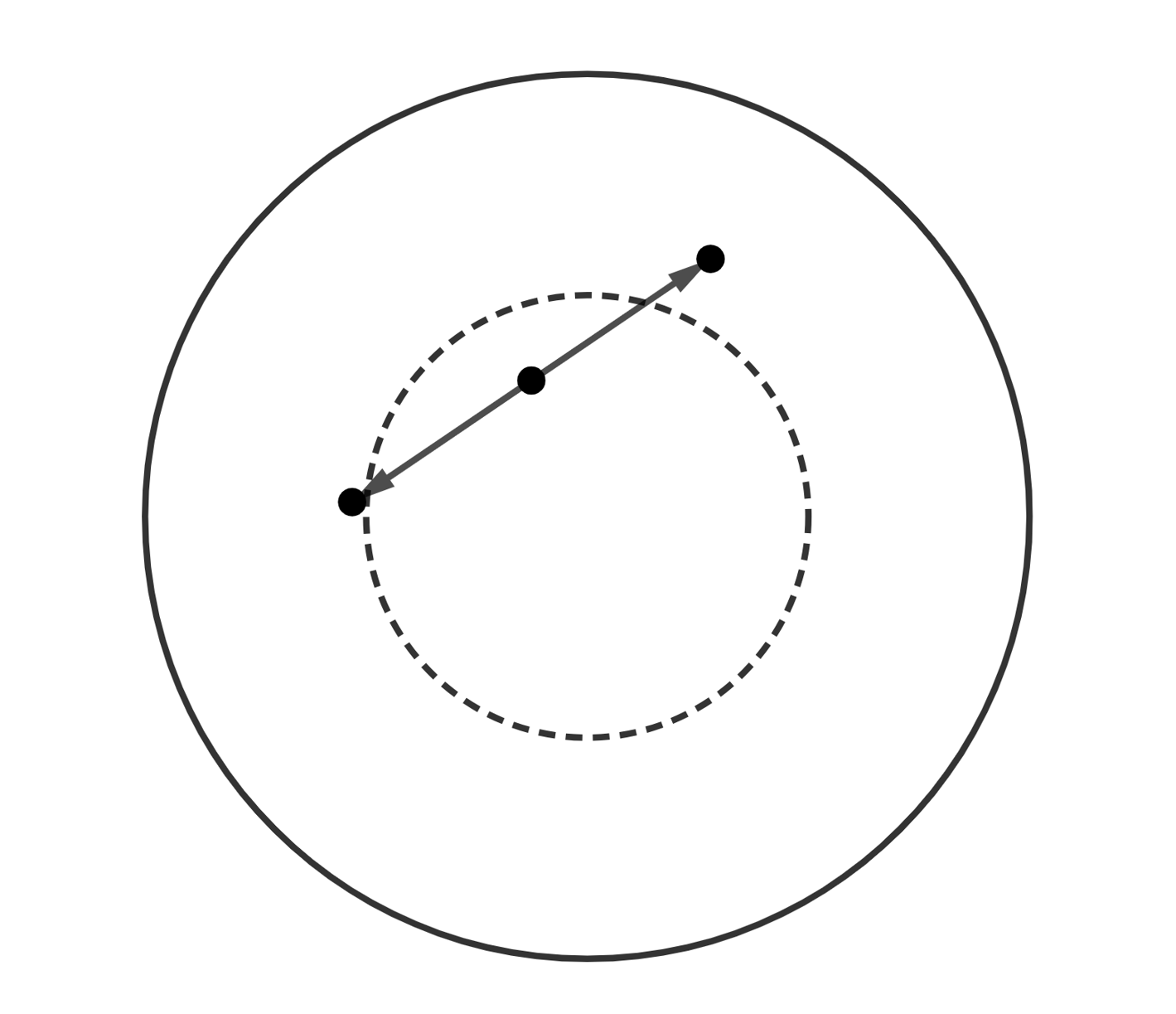}
       \put(40,35){$B_E(q,\eta/4)$}
       \put(40,10){$\overline{B_E(q,\eta/2)}$}
\put(40,55){$p$}
\put(18,45){$p - v$}
\put(60,68){$p + v$}
    \end{overpic}
	\caption[image]{The points $p_\pm$ and the Euclidean balls centered at $q$}\label{fig0}
\end{figure}

Since $\{B_E(q,\eta/4)\}_{q \in S}$ is an open cover of $S$, by compactness, there exists a finite cover $\{B_E(q_i,\eta/4)\}_{1 \le i \le N}$ with $N < +\infty$. Now we introduce
\[
C := 2 \max_{1 \le i \le N} C(q_i,\eta) \ge 0,
\]
then, for every $p \in S$, there exists an $i \in \{1, \ldots, N\}$ such that $p \in B_E(q_i,\eta/4)$, which yields 
\[
d_0^2(p + v) + d_0^2(p - v) - 2 d_0^2(p) \le 2 C(q_i,\eta)  |v|^2 \le C |v|^2, \quad \forall \,  |v| \le \eta/4.
\]
 By taking $c := \eta/4 > 0$, we complete the proof of the lemma.
\end{proof}

We stress that both constants $C, c>0$ in Lemma \ref{l1} depend only on $S = \partial B_{CC}(0,1)$, therefore they are both universal constants.

Let us now prove Theorem \ref{t1}. 
\begin{proof}[Proof of Theorem \ref{t1}]
We continue using $d_0$ to denote the CC distance from the identity. Theorefore we have $d_0^2(p) = d^2(p,0)$ for $p\in \G$. Moreover, since Carnot groups satisfy the H\"ormander condition, meaning the distribution associated to Carnot groups is bracket generating,  the CC distance $d$ is continuous   (see \cite[Theorem 2.2 and Theorem 2.3]{M02}). Therefore it is clear that the condition $d_0^2\in LSC(\G)$. 
 
We first pose the following {\bf claim:}
\begin{equation}\label{locp}
\begin{aligned}
&\text{For every $p \in \G$, there exist $C > 0$  (independent of $p$) and  $c(p) > 0$ such that}\\
& d_0^2(p \cdot h) + d_0^2(p \cdot h^{-1}) - 2 d_0^2(p) \le C |h|^2, \quad \forall \, h \in \HH_0, |h| \le c(p).
\end{aligned}
\end{equation}

Assuming that \eqref{locp} holds, we can use the viscosity characterization for h-semiconcave  functions given in Theorem \ref{tp2} to easily conclude. In fact, take $\varphi \in C^2(\G)$ such that  $d_0^2 - \varphi$ has a local minimum at some $p\in \G$. Without loss of generality, by standard viscosity theory techniques, we can assume that the local minimum is equal to 0, i.e.  $\varphi(p) = d_0^2(p)$ (see e.g. \cite[Proposition 2.2]{K04}). 
Then  claim \eqref{locp} implies that  for $h \in \HH_0$ with $|h|$ small enough,
we get
\begin{align} \label{estth30}
\varphi(p \cdot h) + \varphi(p \cdot h^{-1}) - 2 \varphi(p) \le  d_0^2(p \cdot h) + d_0^2(p \cdot h^{-1}) - 2 d_0^2(p) \le C |h|^2.
\end{align}
However, recalling the horizontal differential operators introduced in  
 \eqref{firt} and \eqref{sect} and  by applying the Taylor expansion in the group (see e.g. \cite[\S~20]{BLU07}), we can write
\begin{equation} \label{estth3}
\begin{aligned}
\varphi(p \cdot h) = \varphi(p) + \langle \nabla_H \varphi(p), h \rangle + \frac{1}{2} \langle  (\nabla_H^2 \varphi(p))^* h, h \rangle + o(|h|^2), \\
\varphi(p \cdot h^{-1}) = \varphi(p) - \langle \nabla_H \varphi(p), h \rangle + \frac{1}{2} \langle  (\nabla_H^2 \varphi(p))^* h, h \rangle + o(|h|^2).
\end{aligned}
\end{equation}

Combining \eqref{estth30} and \eqref{estth3}, we obtain
\[
\langle  (\nabla_H^2 \varphi(p))^* h, h \rangle + o(|h|^2) \le C\, |h|^2,
\]
for all $h \in \HH_0$ with $|h|$ small enough. Dividing this inequality by $|h|^2$ and letting  $|h|\to $, we deduce that $(\nabla_H^2 \varphi(p))^* \le C\, \I_m$ and thus $-(\nabla_H^2 d_0^2)^* \ge -C\; \I_m$ in the viscosity sense, which concludes the result by Theorem \ref{tp2}.\\

Let us now prove {\bf claim \eqref{locp}}. Note that $\G$ is a fat, since it is assumed to be ideal.  We here only consider the case when $G$ is of step 2. The case of step 1 Carnot group can be reduced to the known Euclidean case. Let us fix the constants $C \ge 0$ and $c > 0$ appearing in Lemma \ref{l1}, and split the proof of \eqref{locp} into three cases:


\vspace{0.4cm}
 
\paragraph{Case 1: $p \in S = \{q\in \G \,|\, d(q,0) = 1\}$.} We use Lemma \ref{l1} for this case. 
It follows from from \eqref{BCDH} and  \eqref{BCDH2} that
\begin{align} \label{Lieadd}
p \cdot h = p + h + \cR(p,h), \quad p \cdot h^{-1} = p - h - \cR(p,h), \quad \forall \, p , h \in \G.
\end{align}
Furthermore, we apply inequality  \eqref{C_0}  to $p \in S = \partial B_{CC}(0,1)$ to get $|\cR(p,h)| \le C_1 |h|$, where $C_1 = C_0 \sup_{p \in S}|p| \in (0,+\infty)$. As a result, we have
$$|v_{p,h}| \le |h| + C_1 |h|, \quad\forall \, p \in S, h \in \G,\quad  \textrm{where}\; v_{p,h} := h + \cR(p,h).$$
Then for $c_1 = \frac{c}{1 + C_1}$,  whenever $p \in S$ and $h \in \HH_0$ such that $|h| \le c_1$, we deduce $|v_{p,h}| \le c$. Combining this with Lemma \ref{l1} (with constant $\overline{C}$ in place of $C$) and \eqref{Lieadd}, we obtain
\begin{align*}
&d_0^2(p \cdot h) + d_0^2(p \cdot h^{-1}) - 2 d_0^2(p) 
=  d_0^2(p + v_{p,h}) + d_0^2(p - v_{p,h}) - 2 d_0^2(p) \\
&\le  \overline{C} |v_{p,h}|^2 \le \overline{C} \left(1 + C_1\right)^2 |h|^2,
 \quad \forall \, p \in S, h \in \HH_0, |h| \le c_1.
\end{align*}
This  proves {\bf claim \eqref{locp}} with $C= \overline{C} \left(1 + C_1\right)^2 $. 


\vspace{0.4cm}

\paragraph{Case 2: $p\in \G$ and $p \ne 0$.} In this case, we use the properties of the CC distance as in \eqref{homo}.
In fact,  for every $p\neq 0$ we can define $\widetilde{p}=\delta_{1/r}(p)$ with $r=d_0(p)$ (i.e. $p=\delta_r(\widetilde{p})$) so that $\widetilde{p}\in S$. 
Indeed, we have 
\[
d_0(\widetilde{p})= d_0\left(\delta_{1/r}(p)\right)= \frac{1}{r}\, d_0(p)=\frac{1}{r}\,r=1.
\]
Then we can adopt Case 1 for $\widetilde{p}$ with $\widetilde{h}=\delta_{1/r}(h)$ for all $h\in \HH_0$ such that 
 $|h| \le c_1 r$, where $c_1$ is the constant determined in Case 1. 
It is worth pointing out that, due to the condition $h\in \HH_0$, we have
\[
\delta_{1/r}(h)=\frac{|h|}{r}, \quad |\widetilde{h}|=\frac{|h|}{r}\leq \frac{c_1\,r}{r}=c_1.
\]
Hence by our result in Case 1 we can write
\begin{align*}
&d_0^2(p \cdot h) + d_0^2(p \cdot h^{-1}) - 2 d_0^2(p)
= d_0^2(\delta_r(\widetilde{p}) \cdot h) + d_0^2(\delta_r(\widetilde{p}) \cdot h^{-1}) - 2 d_0^2(\delta_r(\widetilde{p})) \\
=\,&   r^2 [d_0^2(\widetilde{p} \cdot \delta_{1/r} (h)) + d_0^2(\widetilde{p} \cdot \delta_{1/r} (h)^{-1}) - 2 d_0^2(\widetilde{p})] 
= r^2 \left[d_0^2(\widetilde{p} \cdot \widetilde{h}) + d_0^2(\widetilde{p} \cdot \widetilde{h}^{-1}) - 2 d_0^2(\widetilde{p})\right] 
 \\
\le  \,& r^2 C |\,\widetilde{h}\,|^2 
\le\, r^2 C \frac{|h|^2}{r^2}=
 C  |h|^2,
\end{align*}
where we take $C= \overline{C} \left(1 + C_1\right)^2$ as in Case 1 and $c(p)=c_1 d_0(p)$. This proves {\bf claim \eqref{locp}} for the current case with $C= \overline{C} \left(1 + C_1\right)^2 $ and $c(p)=c_1 d_0(p)$. 

\vspace{0.4cm}

\paragraph{Case 3: $p = 0$.} It remains to prove the claim in the case $p = 0$. 
To this end, we use the equivalence of the CC distance and the homogeneous norm introduced in \eqref{homogeneousNorm}. Noticing that, for all $h \in \HH_0$ we have that $|h|_{\G}=|h|$, hence for  $p=0$ we obtain
\[
d_0^2(p \cdot h) + d_0^2(p \cdot h^{-1}) - 2 d_0^2(p)  = d_0^2(h) + d_0^2(-h) \le 2\, C_2^2\, |h|^2,
\]
 where $C_2 \ge 1$ is the constant given in Proposition \ref{phomo}. This proves {\bf claim \eqref{locp}}  for the point $p=0$ with the constants $C=2\, C_2^2$ and $c(p)=1$.

\smallskip 
 
To sum up, considering all of the cases discussed above, we have shown that {\bf claim \eqref{locp}} holds for all $p\in \G$ and $h\in \HH_0$ such that $|h|\le c(p)$, with
\[
C= \max\left(\overline{C} \left(1 + C_1\right)^2 ,2 C_2^2\right) > 0, \quad c(p) = \begin{cases}
c_1 d_0(p), \quad &\mbox{if $p \ne 0$}, \\
1, \quad &\mbox{if $p = 0$}.
\end{cases}
\]
\end{proof}

\begin{figure}[htp]
	\centering
        \begin{overpic}[scale = 0.2]{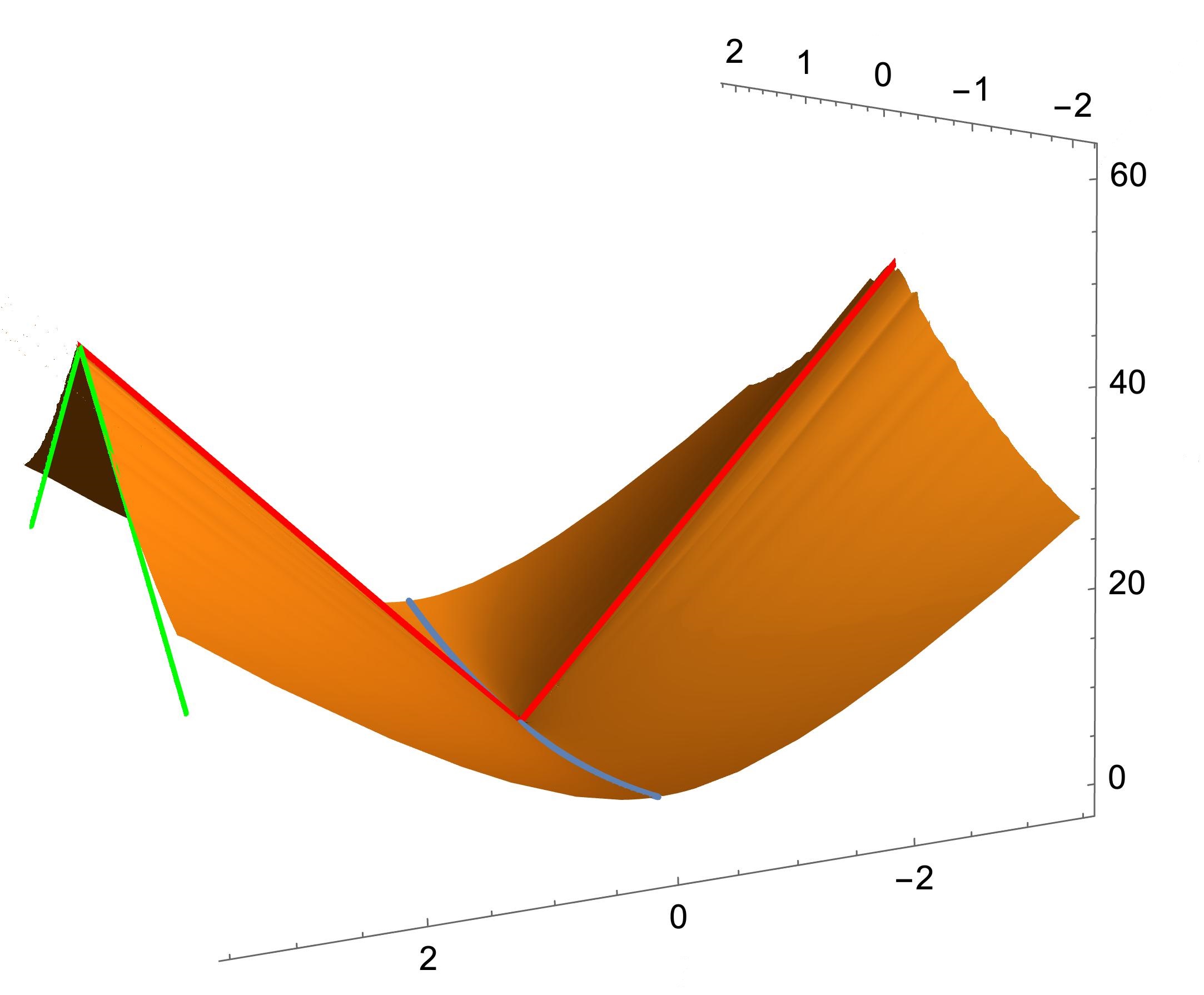}
       \put(95,42){$d_0^2$}
       \put(72,80){$x$}
       \put(55,2){$z$}
    \end{overpic}
	\caption[image]{The graph of $d_0^2=d^2(\cdot ,0)$ in the Heisenberg group $\H$}\label{fig1}
\end{figure}
\begin{remark}
We include Figure \ref{fig1} to illustrate the significant difference between the h-semiconcavity on Heisenberg group $\H$ and the usual Euclidean semiconcavity. The following explicit expression of the CC distance in $\H$ is obtained in \cite[Theorem 1.36]{BGG00}:
\begin{align}\label{expd2}
d_0^2(x,y,z) = 
\begin{dcases}
\left( \frac{\theta}{\sin{\theta}}\right)^2 (x^2 + y^2), \quad & \mbox{if $(x,y) \ne (0,0)$ and $\theta = \mu^{-1}\left(\frac{4|z|}{x^2 + y^2}\right),$} \\
4\pi |z|, \quad & \mbox{if $(x,y) = (0,0),$}
\end{dcases}
\end{align}
where $\mu: (-\pi,\pi)\to \R$  given by 
\begin{align}\label{mu}
\mu(s) := \frac{2s - \sin(2s)}{2 \sin^2{s}}
\end{align}
is an increasing diffeomorphism (cf. \cite[Lemme 3, p. 112]{G77}). Since $d_0^2$ is rotational symmetric in the coordinates $x$ and $y$, we only draw the graph of $d_0^2$ on the set $\{(x,0,z): \, x,z \in \R\}$. It can be seen from the red curve that a corner-like singularity occurs at the identity $0$ in the direction $z$ (i.e.  the forbidden direction). As a result, it is not Euclidean semiconcave at $0$, which by definition means that $d_0^2$ is not locally Euclidean semiconcave in any neighborhood of the identity. This observation might explain why the result \cite[Theorem 1]{CR08} or \cite[Theorem 5.9]{FR10} did not touch the identity on $\H$. 

Our result confirms that such singularity actually does not affect the horizontal semiconcavity of $d_0^2$ at the identity $0$. As a matter of fact, to investigate the definition of h-semiconcavity, we should restrict the function to the horizontal plane, which gives a smooth function
\[
d_0^2(x, y, 0)=  x^2 + y^2.
\]
The graph of this function is plotted as the blue curve in Figure \ref{fig1}. \end{remark}

While we have proved in Theorem \ref{t1} that the squared CC distance $d_0^2$ in an ideal Carnot group is h-semiconcave, it however fails to be an h-semiconvex function even in the Heisenberg group $\H$, the simplest example of ideal Carnot groups. To see this, we present  the following result, which suggests a corner-like singularity at every nonzero point in the center; see the green curve in Figure \ref{fig1}. See the forthcoming paper \cite{LZZ23} for more discussions about this property. Since h-semiconvexity is weaker than Euclidean semiconvexity, the following result also implies \cite[Corollary 30]{BR19} on the Heisenberg group $\H$.
\begin{proposition}
\label{cp1}
Let $d$ be the CC distance of the Heisenberg group $\H$ and $d_0=d(\cdot, 0)$ in $\H$. For every $p=(0,0,\tau)\in \H$ with $\tau\neq 0$, the following holds:
\begin{align}\label{tar1}
 \lim_{h \in \HH_0 \setminus \{0\}, h \to 0} \frac{d_0^2(p \cdot h) + d_0^2(p \cdot h^{-1}) - 2 d_0^2(p)}{|h|} = - 4 d_0(p) .
\end{align}
In particular, 
\[
 \lim_{h \in \HH_0 \setminus \{0\}, h \to 0} \frac{d_0^2(p \cdot h) + d_0^2(p \cdot h^{-1}) - 2 d_0^2(p)}{|h|^2} = - \infty.
\]

\end{proposition}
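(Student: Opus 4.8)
The plan is to reduce the limit to a one–dimensional asymptotic analysis based on the explicit formula \eqref{expd2}. First I would compute the two group products in the numerator. Since $p=(0,0,\tau)$ and any $h\in\HH_0$ has the form $h=(h_1,h_2,0)$, the Heisenberg group law from Example \ref{e1} gives $p\cdot h=(h_1,h_2,\tau)$ and $p\cdot h^{-1}=(-h_1,-h_2,\tau)$, because the cross term $\tfrac12(x\tilde y-\tilde x y)$ vanishes when the first factor has zero horizontal part. The formula \eqref{expd2} shows that $d_0^2$ depends only on $x^2+y^2$ and $|z|$, hence it is invariant under $(x,y)\mapsto(-x,-y)$; therefore $d_0^2(p\cdot h)=d_0^2(p\cdot h^{-1})$ and the numerator collapses to $2\,[\,d_0^2(h_1,h_2,\tau)-d_0^2(0,0,\tau)\,]$. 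Writing $\rho:=|h|=\sqrt{h_1^2+h_2^2}$, the problem reduces to showing that $\rho^{-1}[\,d_0^2(h_1,h_2,\tau)-d_0^2(0,0,\tau)\,]\to -2\,d_0(p)$ as $\rho\to 0$.

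The heart of the argument is an asymptotic expansion of $d_0^2(h_1,h_2,\tau)$ in $\rho$. By \eqref{expd2}, $d_0^2(0,0,\tau)=4\pi|\tau|$ and, for $\rho>0$,
\[
d_0^2(h_1,h_2,\tau)=\left(\frac{\theta}{\sin\theta}\right)^2\rho^2,\qquad \theta=\mu^{-1}\!\left(\frac{4|\tau|}{\rho^2}\right).
\]
As $\rho\to 0^+$ the argument $4|\tau|/\rho^2\to+\infty$, and since $\mu$ is an increasing diffeomorphism with $\mu(s)\to+\infty$ only as $s\to\pi^-$, we must have $\theta\to\pi^-$. I would then set $\epsilon:=\pi-\theta\to 0^+$ and expand $\mu$ near $\pi$ using \eqref{mu}. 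A Taylor computation yields $\mu(\pi-\epsilon)=\pi/\epsilon^2+\pi/3+O(\epsilon)$, so inverting the relation $\mu(\pi-\epsilon)=4|\tau|/\rho^2$ gives $\epsilon^2=\tfrac{\pi\rho^2}{4|\tau|}\,(1+O(\rho^2))$, i.e.\ $\epsilon=\tfrac{\rho}{2}\sqrt{\pi/|\tau|}\,(1+O(\rho^2))$.

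Substituting this into $(\theta/\sin\theta)^2=\big(\tfrac{\pi-\epsilon}{\sin\epsilon}\big)^2$ and expanding in $\epsilon$, the leading term $\pi^2/\epsilon^2$ combines with the factor $\rho^2$ (using $\epsilon^2\sim \pi\rho^2/(4|\tau|)$) to reproduce exactly $4\pi|\tau|=d_0^2(0,0,\tau)$, so the $O(1)$ contributions cancel in the difference; the next term $-2\pi/\epsilon$ supplies the crucial linear contribution, giving $d_0^2(h_1,h_2,\tau)-d_0^2(0,0,\tau)=-4\rho\sqrt{\pi|\tau|}+O(\rho^2)$. Since $d_0(p)=\sqrt{4\pi|\tau|}=2\sqrt{\pi|\tau|}$, dividing by $\rho$ and letting $\rho\to 0$ gives $-2\,d_0(p)$, whence the full numerator divided by $|h|$ tends to $-4\,d_0(p)$, proving \eqref{tar1}. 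The second limit is then immediate: because the numerator is asymptotic to $-4\,d_0(p)\,|h|$ with $d_0(p)>0$, dividing instead by $|h|^2$ produces a quantity asymptotic to $-4\,d_0(p)/|h|\to-\infty$.

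The main obstacle is the inversion and expansion of $\mu$ at its blow-up point $s=\pi$: one must carry the expansion of $\mu(\pi-\epsilon)$ to high enough order to control $\epsilon$ as a function of $\rho$ up to relative error $O(\rho^2)$, and then track the first two terms of $(\theta/\sin\theta)^2\rho^2$ with enough precision to see the $O(1)$ parts cancel and to isolate the genuine $O(\rho)$ coefficient. Care is also needed to confirm that all error terms are uniformly $O(\rho^2)$ independently of the direction of $h$ within $\HH_0$, so that the limit exists and equals the stated value no matter how $h\to 0$ in $\HH_0\setminus\{0\}$.
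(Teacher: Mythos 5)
Your proof is correct, and it follows the same overall strategy as the paper: both rest on the explicit Gaveau formula \eqref{expd2}, a symmetry reduction of the numerator to $2\bigl[d_0^2(h_1,h_2,\tau)-d_0^2(0,0,\tau)\bigr]$, and an analysis of $\theta\to\pi^-$. The differences are in execution, and they are worth noting. You work with general $\tau\neq 0$ and arbitrary horizontal directions at once, using the rotational invariance of $d_0^2$ (it depends only on $x^2+y^2$ and $|z|$); this makes the direction-uniformity you worry about in your last paragraph automatic, since your quantity depends on $h$ only through $\rho=|h|$, so that concern is already resolved by your own reduction. The paper instead first reduces to $\tau=1$ by dilation homogeneity and to $h=(h_1,0,0)$ by symmetry, and then avoids your two-term asymptotic inversion of $\mu$ entirely by an algebraic substitution: writing $2d_0^2(0,0,1)=8\pi=2\pi h_1^2\mu(\theta)$ via the defining relation $\mu(\theta)=4/h_1^2$, the difference quotient becomes $2h_1\bigl[(\theta/\sin\theta)^2-\pi\mu(\theta)\bigr]$, in which the $O(\epsilon^{-2})$ singularities cancel exactly rather than asymptotically, so only the leading-order relation $h_1/(\pi-\theta)\to 2/\sqrt{\pi}$ (the paper's \eqref{limit1}) is needed to finish. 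Your route costs more bookkeeping — you must expand $\mu(\pi-\epsilon)=\pi/\epsilon^2+\pi/3+O(\epsilon)$, invert it with controlled error $\epsilon=\tfrac{\rho}{2}\sqrt{\pi/|\tau|}\,(1+O(\rho^2))$, and track two terms of $(\theta/\sin\theta)^2\rho^2$ (all of which you do correctly) — but it buys a self-contained statement for every $\tau$ without the dilation step, and it exhibits the full first-order expansion $d_0^2(h_1,h_2,\tau)-4\pi|\tau|=-4\rho\sqrt{\pi|\tau|}+O(\rho^2)$, which is slightly more information than the bare limit \eqref{tar1}.
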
 

\begin{proof}
The key to the proof is the expression of the squared CC distance given in \eqref{expd2}. Since $d_0^2$ is symmetric with respect to the $xy$-plane and rotationally symmetric about the $z$-axis, without loss of generality, we may assume that $h_1 > 0$, $h_2 = 0$, and $z > 0$.
Moreover, in view of the $1$-homogeneity of the CC distance with respect to the group dilation as shown in \eqref{homo}, it suffices to prove \eqref{tar1} for $e_3=(0,0,1)$. In fact, for a generic $p=(0,0,\tau)=\delta_\tau(e_3)$, we have, as $\HH_0\ni h\to 0$,  
 \begin{align*}
 &\frac{d_0^2(p \cdot h) + d_0^2(p \cdot h^{-1}) - 2 d_0^2(p)}{|h|}  =  \frac{\tau^2}{|h|}\left(
d_0^2\big(e_3 \cdot \delta_{1/\tau} (h)\big)  + d_0^2\big(
e_3\cdot \delta_{1/\tau} (h)^{-1}\big) - 2 d_0^2(e_3) \right)
\\
= & \frac{\tau}{|\widetilde{h}|}
\left(
d_0^2\big(e_3 \cdot\widetilde{h}\big)  + d_0^2\big(e_3\cdot  \widetilde{h}^{-1}\big) - 2 d_0^2(e_3) \right)
\to  \, -4 \tau d_0(e_3)=-4d_0(p),
\end{align*}
where we applied \eqref{tar1} at $p=e_3$ with $\widetilde{h}=\delta_{1/\tau}(h)=h/\tau$.

Let us now prove \eqref{tar1} at $p=e_3$. By symmetry, we may further take $h = (h_1,0,0)$ with $h_1 > 0$. In this case, we have $p\cdot h=(h_1,0,1)$ and  $p\cdot h^{-1}=(-h_1,0,1)$. Our goal is then to show 
\begin{align}\label{tar2}
\lim_{h_1 \to 0+} \frac{1}{h_1}\left(d_0^2(h_1,0,1) + d_0^2(-h_1,0,1)- 2 d_0^2(0,0,1)\right) = - 4d_0(0, 0, 1). 
\end{align}
We use the expression of the squared CC distance given by \eqref{expd2}, which yields
\[
d_0^2(0,0,1)= 4\pi, \qquad d_0^2(h_1,0,1) = d_0^2(-h_1,0,1) = \left( \frac{\theta}{\sin{\theta}}\right)^2 h_1^2,
\]
where $\theta = \theta(h_1) = \mu^{-1}\left( \frac{4}{h_1^2} \right) \to \pi{-}$ as $h_1\to 0+$. 
The equation for $\theta$ also gives
\begin{align}\label{asyh}
\frac{4}{h_1^2} =  \mu(\theta) =\frac{2\theta-\sin(2\theta)}{2\sin^2(\theta)},
\end{align}
which yields
\begin{equation}\label{limit1}
\frac{ h_1}{\pi-\theta}\to \frac{2}{\sqrt{\pi}},\quad \text{as $h_1 \to 0+$.}
\end{equation}
 Then using \eqref{asyh}, combined with \eqref{mu},
we get 
\begin{align*}
&\frac{d_0^2(h_1,0,1)+ d_0^2(-h_1,0,1)- 2 d_0^2(0,0,1)}{h_1} = 2h_1 \frac{d_0^2( h_1,0,1) - d_0^2(0,0,1)}{h_1^2} \\   = & \,  2 h_1 \left[ \left( \frac{\theta}{\sin{\theta}}\right)^2 - \pi \mu(\theta)\right] 
=2 h_1  \frac{\theta(\theta - \pi) + \pi \sin{\theta} \cos{\theta} }{\sin^2{\theta}} .
\end{align*}
By \eqref{limit1}, we can pass to the limit of the relation above as $h_1\to 0+$ to obtain
\[
\lim_{h_1\to 0+} \frac{d_0^2(h_1,0,1)+ d_0^2(-h_1,0,1)- 2 d_0^2(0,0,1)}{h_1}=\lim_{h_1\to 0+} -\frac{2h_1(\theta+\pi)}{\pi-\theta}=-8\sqrt{\pi}. 
\]
Our proof of \eqref{tar2} is complete, since $-8\sqrt{\pi}=-4 d_0(0,0,1)$. 
\end{proof}

\section{Generalization to step 2 Carnot groups}\label{sec:step2}
\setcounter{equation}{0}

In this section we drop the assumption that $G$ is ideal by using the results in \cite{BR18} instead of those by Cannarsa-Rifford \cite{CR08} and Figalli-Rifford \cite{FR10}. Here the main difficulty is to investigate the relation between the notion of $C$-nearly horizontally semiconcavity introduced in  \cite{BR18} and our h-semiconcavity. First we recall the  \cite[Definition 2.7]{BR18} but in a slightly simplified form. Although the definition given below is somewhat different from the original one in \cite{BR18}, it is indeed equivalent for the canonical left-invariant sub-Riemannian structure $(\D,g)$. 

In what follows we will use $D$ to denote the differential of a map between Euclidean spaces. When the target space is the real line $\R$, it is the usual gradient $\nabla$  and in such case we also use  $\nabla^2$ to denote the Hessian matrix.  Furthermore, for multi-index $\alpha = (\alpha_1 , \ldots, \alpha_m) \in \N^m$ and a vector $h \in \R^m$, we define
\begin{equation} \label{multi-index}
\alpha! := \prod_{\ell = 1}^m (\alpha_\ell)!, \quad 
|\alpha| := \sum_{\ell = 1}^m \alpha_\ell, \quad 
D^\alpha := \prod_{\ell = 1}^m \partial_\ell^{\alpha_\ell}, \quad 
h^\alpha := \prod_{\ell = 1}^m h_{\ell}^{\alpha_\ell}.
\end{equation}

\begin{definition}[$C$-nearly horizontal semiconcavity]\label{Cnhs}
Let $C > 0$ and $\Omega$ be an open subset of $\G \cong \R^n$, a function $f : \Omega \to \R$ is said to be {\it $C$-nearly horizontally semiconcave with respect to $(\D,g)$} if for every $p \in \Omega$, there
are an open neighbourhood $V_p$ of $0$ in $\HH_0 \cong \R^m$, a function $\phi_p : V_p \subset \HH_0 \to \Omega$ of class $C^2$, a function $\psi_p: V_p \subset \HH_0 \to \R$ of class $C^2$, and an $m \times m$ orthogonal matrix $O_p \in \mathrm{O}_m$ such that
\begin{align}\label{assCnh}
\phi_p(0) = p, \quad \psi_p(0) = f(p), \quad f (\phi_p (h)) \le \psi_p(h), \quad \forall \, h \in V_p, \quad 
D \phi_p (0) = D L_p(0) O_p
\end{align}
with
\[
\|\phi_p\|_{C^2} \le C, \quad \|\psi_p\|_{C^2} \le C.
\]
Here $L_p$ denotes the restriction of the left multiplication by $p$ on $\G$ on the horizontal plane $\HH_0$ and
\[
\|\varphi\|_{C^2} := \max_{\beta \in \N^m, \  |\beta| \le 2} \sup |D^\beta \varphi|.
\]
\end{definition}

Roughly speaking, the definition tells that at every point $p \in \Omega$, the function $f$ can be touching from above uniformly along some $m$-dimensional $C^2$ submanifold 
$M_p$, which is tangent to the distribution $\D_p$.
As a result, we will show that the notion of h-semiconcavity is generally stronger than this $C$-nearly horizontal semiconcavity (at least locally) since we can just choose the manifold $M_p$ to be $p \cdot \HH_0$. To show the converse, we need some additional regularity of $f$, as shown in the following proposition.

\begin{proposition}\label{propCnhs}
Let $\G$ be a step 2 Carnot group and $\Omega \subset \G$ open. Then the following results hold. 
\begin{enumerate}
\item If $f$ is h-semiconcave in $\Omega$, then for each open bounded set $\Omega'$ such that $\overline{\Omega'}\subset \Omega$, $f$ is $C$-nearly horizontally semiconcave in $\Omega'$ with some $C = C(\Omega')>0$. 
\item If $f$ is $C$-nearly horizontally semiconcave for some $C>0$ and Lipschitz with respect to the Euclidean norm, then $f$ is also h-semiconcave in $\Omega$. 
\end{enumerate}
\end{proposition}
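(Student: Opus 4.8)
The two implications are handled separately. For (2) I would transport the touching datum $(\phi_p,\psi_p,O_p)$ onto the flat plane $p\cdot\HH_0$ and use the Euclidean Lipschitz bound to pay for the discrepancy; for (1) I would take the plane $p\cdot\HH_0$ itself as the touching manifold and reduce matters to the Euclidean semiconcavity of the restriction of $f$ to that plane.

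For (2), fix $p\in\Omega$ and let $\phi_p,\psi_p,O_p,V_p$ be as in Definition \ref{Cnhs}. The starting observation is that, because $\G$ has step $2$, \eqref{BCDH2} makes $h\mapsto p\cdot h$ affine on $\HH_0$, so that $p\cdot h = p+DL_p(0)h$ exactly. Given $h\in\HH_0$ with $|h|$ small, put $k:=O_p^{-1}h\in V_p$. Using $D\phi_p(0)=DL_p(0)O_p$, a second order Taylor expansion of $\phi_p$ and $\|\phi_p\|_{C^2}\le C$, one gets
\[
\phi_p(k)=p+DL_p(0)(O_pk)+r(k)=p\cdot h + r(k),\qquad |r(k)|\le C'|h|^2 ,
\]
with $C'$ depending only on $C$ and $m$. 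Now I invoke the Lipschitz hypothesis: since $\phi_p(k)$ and $p\cdot h$ are within $C'|h|^2$ in the Euclidean norm, $f(p\cdot h)\le f(\phi_p(k))+LC'|h|^2\le\psi_p(k)+LC'|h|^2$; applying the same to $-h$ (i.e. to $-k=O_p^{-1}h^{-1}$) gives $f(p\cdot h^{-1})\le\psi_p(-k)+LC'|h|^2$. Since $\psi_p(0)=f(p)$ and $\|\psi_p\|_{C^2}\le C$ forces $\psi_p(k)+\psi_p(-k)-2\psi_p(0)\le C''|h|^2$, adding the two estimates yields
\[
f(p\cdot h)+f(p\cdot h^{-1})-2f(p)\le (C''+2LC')\,|h|^2
\]
for all small $h\in\HH_0$. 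Finally, the same Taylor-in-the-group argument used for claim \eqref{locp} in the proof of Theorem \ref{t1} turns this pointwise local inequality into $-(\nabla_H^2 f)^*\ge-(C''+2LC')\,\I_m$ in the viscosity sense, so $f$ is h-semiconcave by Theorem \ref{tp2}. The Lipschitz assumption is essential precisely here: the Euclidean error $r(k)$ is only $O(|h|^2)$ but is generically non-horizontal, hence CC-large, so without Euclidean Lipschitz continuity it could not be absorbed.

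For (1), for $p\in\Omega'$ I take $\phi_p(h):=L_p(h)=p\cdot h$ and $O_p:=\I_m$. By \eqref{BCDH2} and \eqref{C_0} this $\phi_p$ is affine with linear part bounded by $1+C_0|p^{(1)}|$, so $\|\phi_p\|_{C^2}\le C(\Omega')$ uniformly for $p\in\Omega'$, and $D\phi_p(0)=DL_p(0)=DL_p(0)O_p$. It then suffices to construct the upper barrier $\psi_p$, and for this the key claim is that $g_p(h):=f(p\cdot h)$ is Euclidean semiconcave in a neighbourhood of $0$ with a semiconcavity constant uniform over $p\in\Omega'$; granting it, the usual semiconcavity paraboloid provides a $C^2$ function $\psi_p\ge g_p$ with $\psi_p(0)=g_p(0)=f(p)$ and $\|\psi_p\|_{C^2}\le C(\Omega')$, which is exactly what Definition \ref{Cnhs} requires.

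The key claim is where the real work lies. Since $h\mapsto p\cdot h$ is affine, $g_p$ is Euclidean semiconcave iff $f$ is Euclidean semiconcave along the plane $p\cdot\HH_0$. To establish this I would write the second difference of $g_p$ at $h_0$ in a direction $k$ as $f(a)+f(b)-2f(q)$, where $a=p\cdot(h_0-k)$, $b=p\cdot(h_0+k)$ and $q=p\cdot h_0$ is their Euclidean midpoint, and compare it with the horizontal second difference $f(q\cdot k)+f(q\cdot k^{-1})-2f(q)\le C|k|^2$ furnished by the h-semiconcavity of $f$ at $q$. A direct computation with \eqref{BCDH2} gives $q\cdot k=b+(0,\mathcal{B}(h_0,k))$ and $q\cdot k^{-1}=a-(0,\mathcal{B}(h_0,k))$, i.e. $q\cdot k^{\pm1}$ and $b,a$ differ by \emph{opposite} vertical shifts of size $\le C_0|h_0||k|$. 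The main obstacle is to show that these two vertical discrepancies cancel to first order, leaving an error $O(|h_0||k|^2)$, so that the second difference of $g_p$ is $\le C(\Omega')|k|^2$. I expect this cancellation to be the hard part, because for a merely h-semiconcave (a priori only lower semicontinuous) function the needed control of the vertical variation of $f$ is not immediate. I would handle it by first reducing h-semiconcavity of $f$ to h-concavity of $w:=f-\tfrac{C}{2}\Phi$, where $\Phi(p):=|p^{(1)}|^2$ satisfies $\Phi(p\cdot h)+\Phi(p\cdot h^{-1})-2\Phi(p)=2|h|^2$ on $\HH_0$, and then using the local Lipschitz regularity of h-concave functions (or a sup-convolution regularization, where the cancellation is simply the first order cancellation of the vertical derivatives of $f$) to bound the error term uniformly on $\Omega'$ before passing to the limit.
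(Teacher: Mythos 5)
Your part (2) is correct and is essentially the paper's own proof: the exact identity $p\cdot h=p+DL_p(0)h$ valid in step 2 groups, the second-order Taylor expansion of $\phi_p$ with remainder controlled by $\|\phi_p\|_{C^2}\le C$, the Euclidean Lipschitz bound to trade $\phi_p(k)$ for $p\cdot h$, the Taylor bound on $\psi_p$, and finally the viscosity characterization of Theorem \ref{tp2} applied exactly as in the proof of Theorem \ref{t1}.

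Part (1), however, has a genuine gap, and it is precisely the step you flag as ``the hard part.'' Your plan requires the restriction $g_p(h)=f(p\cdot h)$ to be Euclidean semiconcave on a neighbourhood of $0$ with a constant uniform over $p\in\Omega'$. That claim is strictly stronger than what Definition \ref{Cnhs} asks for, and in the uniform form you state it is false. Take $\G=\H$ and $f=d_0^2$, which is h-semiconcave by Theorem \ref{t1}. For $p_\delta=(\delta,0,0)$ the horizontal plane $p_\delta\cdot\HH_0=\{(\delta+a,\,b,\,\delta b/2)\}$ contains the group identity at plane coordinates $(a,b)=(-\delta,0)$, and near that point, along the $b$-direction, formula \eqref{expd2} gives $d_0^2(0,b,\delta b/2)=2\pi\delta|b|\,(1+o(1))$: an \emph{upward} corner, namely the vertical corner of Figure \ref{fig1} sliced by the plane, since the direction $(0,1,\delta/2)$ is not horizontal at the identity. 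Hence on any neighbourhood of $0$ of size independent of $p$, no semiconcavity constant for $g_{p_\delta}$ survives as $\delta\to0+$. The same computation shows why your proposed repair cannot work: your error term is a sum of two vertical increments of $f$ with opposite displacements $\mp(0,\mathcal{B}(h_0,k))$, and you need them to cancel to first order; that is a statement about vertical differentiability of $f$, which an h-semiconcave function need not have (for $d_0^2$ at the identity, with $h_0=(-\delta,0)$ and $k=(0,k_2)$, the two increments add up to approximately $8\pi|\mathcal{B}(h_0,k)|$, first order, with no cancellation at all). The CC-Lipschitz regularity of h-concave functions is of no help here, since it controls a vertical increment only by the square root of its size, and a Euclidean sup-convolution does not preserve h-semiconcavity, for the same twisting reason.

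The paper avoids off-centre points of the plane entirely, because Definition \ref{Cnhs} only asks for an upper touching function at the base point. It writes $f_{C_0}=f-C_0|p^{(1)}|^2$, which is h-concave, and uses that such a function admits a horizontal supergradient at every point: a vector $v_p$ with $f_{C_0}(p\cdot h)\le f_{C_0}(p)+\langle v_p,h\rangle$ for admissible $h\in\HH_0$. This is a one-point property: every line through the origin of $\HH_0$ is a horizontal line through $p$, so only concavity of $\tau\mapsto f_{C_0}(p\cdot\tau h)$ enters (and $v_p$, with a bound uniform over $\Omega'$, can be produced for instance by left group mollification, which preserves h-concavity, together with the uniform CC-Lipschitz bound). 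Then $\psi_p(h):=f(p)+\langle v_p+2C_0p^{(1)},h\rangle+C_0|h|^2$ dominates $f\circ L_p$ outright and has uniformly bounded $C^2$ norm. In short, your second differences at $h_0\ne 0$ probe $f$ along non-horizontal increments, where h-semiconcavity carries no information; the paper's argument never leaves the base point, where all increments are horizontal.
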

\begin{proof}
We first prove the first assertion. In fact, since $f$ is h-semiconcave, by Theorem 
we see that the function $f_{C_0}(p) := f(p) - C_0 |p^{(1)}|^2$ is h-concave for some  constant $C_0 > 0$. It follows from the definition of 
h-concave functions that for every $p \in \Omega$, there exists a $v_p \in \R^m$ such that
\[
f_{C_0}(p \cdot h) \le f_{C_0}(p) + \langle v_p, h \rangle, \quad \forall \, h \in \HH_0,
\]
which implies
\[
f(p \cdot h) \le f(p) + \langle v_p + 2C_0 p^{(1)}, h \rangle + C_0 |h|^2, \quad \forall \, h \in \HH_0.
\]
Recalling that, for Carnot groups of step 2, we have $p \cdot h = (p^{(1)}+h, p^{(2)} + \mathcal{B}(p^{(1)},h))$ whenever  $h \in \HH_0$ it is sufficient  to choose $\phi_p(h) := L_p(h) = (p^{(1)}+h, p^{(2)} + \mathcal{B}(p^{(1)},h))$ and $\psi_p(h) := f(p) + \langle v_p + 2C_0 p^{(1)}, h \rangle + C_0 |h|^2$, to get  Definition \ref{Cnhs}.

Now we prove the second assertion. By the viscosity theory technique, namely using a similar argument in the proof of Theorem \ref{t1}, it suffices to prove the following estimate: for every $p$, there exists a constant $c(p) > 0$ such that 
\begin{align}\label{tarp}
f(p \cdot h) + f(p \cdot h^{-1}) - 2 f(p) \le 2(L + 1) C m^2 |h|^2, \qquad \forall \, h \in \HH_0, |h| \le c(p),
\end{align}
where $C$ is the constant in the definition of $C$-nearly horizontally semiconcavity, and $L> 0$ is the Lipschitz constant of  $f$, i.e.
\begin{align}\label{Lipf}
|f(\b{p}) - f(\t{p})| \le L |\b{p} - \t{p}|, \qquad \forall \, \b{p}, \t{p} \in \Omega.
\end{align}

We first need to estimate the ``distance'' between $M_p$ and $p \cdot \HH_0$. By Taylor expansion, for every $\wt{h} \in \HH_0$ with $|\,\wt{h} \, |$ small enough, we have $\phi_p(\wt{h}) = p  + D \phi_p(0) \wt{h} + r_{p,\wt{h}}$, where the remainder is
\[
r_{p,\wt{h}} = \sum_{\beta \in \N^m, \ |\beta| = 2} \frac{2}{\beta !} \left( \int_0^1 (1 - t) D^\beta \phi_p(t \wt{h}) dt \right) (\wt{h})^\beta.
\]
Here, we recall from \eqref{multi-index} the notations related to the multi-index $\beta$. 
From the assumption $\|\phi_p\|_{C^2} \le C$, we have 
\[
\left|r_{p,\wt{h}}\right| \le \sum_{\beta \in \N^m,\ |\beta| = 2} 2C |\,\wt{h} \,|^2 \int_0^1 (1 - t) dt  \le C m^2|\,\wt{h} \,|^2,
\]
and, choosing $\b{p} = p + D \phi_p(0) \wt{h}$ 
and $\t{p} = \phi_p(\wt{h})$  in \eqref{Lipf}, we obtain
\begin{align}\label{Lipf2}
f(p + D \phi_p(0) \wt{h}) - f(\phi_p(\wt{h})) \le |f(p + D \phi_p(0) \wt{h}) - f(\phi_p(\wt{h}))| \le L \left|r_{p,\wt{h}}\right| \le L C  m^2|\,\wt{h} \,|^2.
\end{align}

Recalling that $L_p(h) = p \cdot h = (p^{(1)} + h, p^{(2)} + \mathcal{B}(p^{(1)},h)), \forall \, p \in \G, h \in \HH_0$ with $\mathcal{B}$ bilinear, we have
\[
p + D L_p(0) h = (p^{(1)} , p^{(2)}) + (h, \mathcal{B}(p^{(1)},h)) = p \cdot h.
\]
Plugging the previous equation into \eqref{Lipf} with $h = O_p \wt{h}$ and using \eqref{assCnh}, we deduce
\begin{align}\label{Lipf3}
f(p \cdot h) = f(p + D L_p(0) h) = f(p + D L_p(0) O_p \wt{h}) = f(p + D \phi_p(0) \wt{h}) \le f(\phi_p(\wt{h})) +  L C  m^2|\,\wt{h} \,|^2.
\end{align}
Similarly, by Taylor expansion we deduce
\begin{align}\label{Tay1}
\psi_p(\wt{h}) \le   f(p) + \langle D \psi_p(0), \wt{h} \rangle + C m^2 | \, \wt{h} \, |^2 = f(p) +  \langle O_p D \psi_p(0), O_p \wt{h} \rangle + C m^2 |\,\wt{h} \,|^2,
\end{align}
where we used the fact that $O_p$ is orthonormal. 
Plugging \eqref{Lipf3} and \eqref{Tay1} into \eqref{assCnh} and for $h: = O_p \wt{h}$ (note that by orthonormality $|h|=|\wt{h}|$), we obtain
\begin{equation}
\label{cesare}
f(p \cdot h) \le f(p) + \langle O_p D \psi_p(0), h \rangle + (L + 1) C m^2 |\,h \,|^2, \qquad \forall \, h \in \HH_0 \mbox{ with $| \, h \, |$ small enough}.
\end{equation}

Writing \eqref{cesare}   for $h$ and  for $h^{-1}=-h$, 
and adding two inequalities together, we can deduce, for all $h\in \HH_0$ small enough
\[
f(p \cdot h) + f(p \cdot h^{-1}) - 2f(p) \le  2(L + 1) C m^2 |h|^2,
\]
which implies h-semiconcavity in view of the viscosity interpretation as in the proof of Theorem~\ref{t1}. 
\end{proof}




The assumption of Euclidean Lipschitz continuity is essential in order to control all the terms coming from the second order Taylor expansion in the group and in particular the first derivatives in the forbidden directions appearing there. In fact, while h-semiconcavity implies local Lipschitz continuity w.r.t. to the CC distance that allows to bound the horizontal gradient  (cf. \cite{DGN03, M06, M062, SY06}), in general the total gradient may be unbounded. It turns out that such an assumption is necessary for the equivalence to hold true, as one can see in the following example.


\begin{example}\label{ce}
On the Heisenberg group $\H$ in Example \ref{e1}, the CC distance from the origin $d_0$ is not h-semiconcave but $C$-nearly horizontally semiconcave on $\Omega_{a,b,R} := \{(x,y,z) \in \H \,|\, a (x^2 + y^2) < z < b (x^2 + y^2), |z| < R\}$ for some suitable $a,b,R,C > 0$. Now we choose these constants. We refer the reader to Appendix \ref{sec:Ab} for the detailed calculation. Considering a point $p_* = (x_*,y_*,z_*)$ with $z_* > 0$ and $(x_*,y_*) \ne (0,0)$ and letting $z_* \to 0+$ with $(x_*,y_*)$ fixed, the auxiliary function $\theta = \theta(p_*) \to 0+$ since $\mu^{-1}(s) \to 0+$ as $s \to 0+$. Then it follows from \eqref{pd2d2}-\eqref{pd2d4} and \eqref{HeHe}-\eqref{HeHe2} that
\[
(\nabla_H^2 [d_0^2](p_*))^* \to 2 \, \I_2 + \frac{4}{\mu'(0)} \frac{1}{x_*^2 + y_*^2} 
\begin{pmatrix}
y_*^2 & - x_* y_* \\
- x_* y_* & x_*^2
\end{pmatrix} > 0
\]
since $\mu(s) \to 0+$ as $s \to 0+$. As a result, for $z_*$ small enough there exists a point $p_*$ such that  $ (\nabla_H^2 [d_0^2](p_*))^*$ is positive definite. Now fix the point $p_*$ and we choose $a,b,R$ in such a way that the $p_* \in \Omega_{a,b,R}$. By direct computation, we have 
\[
 (\nabla_H^2 [d_0](p_*))^* = \frac{1}{2 d_0(p_*)} [(\nabla_H^2 [d_0^2](p_*))^* - 2 (\nabla_H [d_0](p_*))\otimes ( \nabla_H [d_0](p_*))].
\]
Note that $(\nabla_H^2 [d_0](p_*))^*$ has at least positive eigenvalue, since if we pick a nonzero vector $\nu$ orthogonal to $\nabla_H [d_0](p_*)$, we have $\langle (\nabla_H^2 [d_0](p_*))^* \nu, \nu \rangle > 0$. Consequently,  $(\nabla_H^2 [d_0])^*$ cannot be bounded from above in $\Omega_{a,b,R}$. In fact, by definition we have $\delta_r (p_*) \in \Omega_{a,b,R}$ for every $r \in (0,1)$, and the dilation property \eqref{homo} yields
\[
(\nabla_H^2 [d_0](\delta_r (p_*)))^* = \frac{1}{r} (\nabla_H^2 [d_0](p_*))^*.
\]
This shows that $d_0$ is not h-semiconcave in $\Omega_{a,b,R}$. 

Let us now prove that in $\Omega_{a,b,R}$, $d_0$ is $C$-nearly horizontally semiconcave for some $C > 0$. For every $p_0 = (x_0,y_0,z_0) \in \Omega_{a,b,R}$, we define the following two functions 
\[
\begin{aligned}
    &\phi_{p_0}(x,y) := \left(x + x_0, y + y_0, z_0 + \frac{x_0 y - y_0 x}{2} - C (x^2 + y^2)\right),\\
    &\quad \psi_{p_0}(x,y) := d_0(p_0) + \langle \nabla_H [d_0](p_0), (x,y) \rangle. 
\end{aligned}
\]
From definition it is clear that 
\[
\phi_{p_0}(0) = p_0, \quad \psi_{p_0}(0) = d_0(p_0),  \quad 
D \phi_{p_0} (0) = D L_{p_0}(0)
\]
and the $C^2$ norms are uniformly bounded (if the parameter $C$ is given).
If we can show $\nabla^2 [d_0 \circ \phi_{p_0}](0) < 0$, then in a small neighbourhood of $0$ we have $d_0(\phi_{p_0}(x,y)) \le d_0(p_0) + \langle \nabla_H [d_0](p_0), (x,y) \rangle$ since $\nabla [d_0 \circ \phi_{p_0}](0) = \nabla_H [d_0](p_0)$. Noticing that 
\[
\nabla^2 [d_0 \circ \phi_{p_0}](0) = \frac{1}{2 d_0(p_0)} [\nabla^2 [d_0^2 \circ \phi_{p_0}](0)- 2 (\nabla_H [d_0](p_0))(\nabla_H [d_0](p_0))^\T]
\]
we only need to prove $\nabla^2 [d_0^2 \circ \phi_{p_0}](0) < 0$. From \eqref{D2t} we have
\[
\nabla^2 [d_0^2 \circ \phi_{p_0}](0) = (\nabla_H [d_0^2](p_0))^* +  (\partial_z d_0^2) \, \nabla^2 \phi_{p_0}(0) = (\nabla_H [d_0^2](p_0))^* - 8 \theta(p_0) C  \, \I_2.
\]
Since $(\nabla_H [d_0^2](p_0))^*$ is bounded from above and 
\[
0 < \mu^{-1}(4a) < \theta(p_0) < \mu^{-1}(4b), \qquad \forall \, p_0 \in \Omega_{a,b,R},
\]
we can always choose a constant $C > 0$ such that $\nabla^2 [d_0^2 \circ \phi_{p_0}](0) < 0$. We have verified that $d_0$ satisfies Definition \ref{Cnhs}.
\end{example}

Now we prove Theorem \ref{t2} for general step 2 Carnot groups. To this end, we first establish the following lemma. Since $T_0^* \G \cong (T_0 \G)^* \cong \g^* \cong \R^n$, there is a natural norm $|\cdot|$ on $T_0^* \G$.

\begin{lemma}\label{lp29}
Let $\G$ be a step 2 Carnot group with CC distance $d$.  Given a fixed (nonempty) compact set $K \subset \G$, there exists a constant $A > 0$ such that for every $p \in K$ and every length minimizing geodesic $\gamma : [0,1] \to \G$ from $0$ to $p$, there is $\xi \in T_0^* \G$ with $|\xi| \le A$ such that $\gamma$ is the projection of some normal extremal with the initial covector $\xi$. 
\end{lemma}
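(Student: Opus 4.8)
The plan is to lift each minimizer to a normal extremal, split its initial covector $\xi\in T_0^*\G$ into a horizontal and a vertical part, bound the horizontal part directly by the distance (hence by the size of $K$), and then bound the vertical part by exploiting the explicit form of the Hamiltonian flow in step $2$ together with the minimality of $\gamma$. Since $\G$ is step $2$, every minimizing geodesic $\gamma$ from $0$ to $p$ is normal (see \cite[Theorem 2.22]{R14}), so $\gamma=\pi\circ\mu$ for a solution $\mu$ of the Hamilton equation \eqref{sRH} with $\mu(0)=:\xi$. Identifying $T_0^*\G\cong\R^m\times\R^{n_2}$ in the exponential coordinates, I would write $\xi=(\xi^{(1)},w)$ and treat the two blocks separately.

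The horizontal bound is immediate. Because $\X_j(0)=\partial_{p_j}$, the sub-Riemannian Hamiltonian reduces at $\xi$ to $H(\xi)=\tfrac12|\xi^{(1)}|^2$, and the control of $\gamma$ satisfies $u(0)=\xi^{(1)}$. As $H$ is constant along a normal extremal and equals $\tfrac12\ell(\gamma)^2$, I obtain $|\xi^{(1)}|=\ell(\gamma)=d_0(p)\le R$, where $R:=\max_{q\in K}d_0(q)<\infty$ by continuity of $d$ and compactness of $K$. Thus no further work is needed for the horizontal part.

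The real content is bounding $w$. In step $2$ the Hamiltonian system integrates explicitly: a direct computation shows that the vertical momentum $w$ is conserved and the control obeys $\dot u(t)=A_w\,u(t)$, where $A_w$ is the skew-symmetric endomorphism of $\R^m$ determined by $\langle A_w a,b\rangle=\langle w,\mathcal B(a,b)\rangle$, with $\mathcal B$ the form in \eqref{BCDH2}. Hence $u(t)=e^{tA_w}\xi^{(1)}$, and the entire geodesic depends on $w$ only through $A_w$. This already exhibits the non-uniqueness of the lift: setting $N:=\{w:A_w=0\}$, subtracting from $w$ its component in $N$ leaves both $\gamma$ and the property of being a normal extremal unchanged, so I may always take $w\in N^\perp$. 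On $N^\perp$ the linear map $w\mapsto A_w$ is injective, whence $|w|\le c_0^{-1}\|A_w\|$ for a constant $c_0>0$ depending only on $\mathcal B$. It therefore remains to bound $\|A_w\|$.

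The hard part is this last bound, and it is where minimality is essential. The nonzero eigenvalues of $A_w$ come in conjugate pairs $\pm i\alpha_j$, and $u(t)=e^{tA_w}\xi^{(1)}$ rotates in the corresponding $2$-planes with these frequencies. The key claim is that a normal geodesic which rotates too fast cannot minimize on $[0,1]$: once $\alpha_{\max}=\|A_w\|$ exceeds a universal constant $\kappa$ (namely $\kappa=2\pi$, as the explicit Heisenberg formula \eqref{expd2} already suggests), a conjugate point occurs before time $1$ and $\gamma$ fails to be minimizing. I would establish this by analyzing the Jacobi equation, i.e. the second variation of energy, for the explicit flow above; a complementary heuristic is the isoperimetric one, since in step $2$ the vertical displacement is the $\mathcal B$-area swept by the horizontal projection, and a projection whose fastest mode turns by more than $2\pi$ admits a strictly shorter competitor enclosing the same area and reaching the same endpoint. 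Granting $\|A_w\|\le\kappa$, I conclude $|w|\le c_0^{-1}\kappa$, and therefore $|\xi|\le\sqrt{R^2+c_0^{-2}\kappa^2}=:A$, a constant depending only on $K$ and on $\G$. The main obstacle is thus the conjugate-time estimate $\|A_w\|\le\kappa$; the rest is bookkeeping of the split covector and of the non-uniqueness coming from $N$.
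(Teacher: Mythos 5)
Your structural reductions are fine: normality of all minimizers in step 2, conservation of the vertical momentum, the explicit form $u(t)=e^{tA_w}\xi^{(1)}$ of the control, and the bound $|\xi^{(1)}|=\ell(\gamma)=d_0(p)\le R$ on the horizontal part. The gap is the step you yourself flag as ``the hard part'': the claim that minimality on $[0,1]$ forces $\|A_w\|\le\kappa$ for a universal $\kappa$. First, you do not prove it — you defer to ``analyzing the Jacobi equation'' plus an isoperimetric heuristic, so the core of the lemma is missing. Second, and more seriously, the claim is \emph{false} as stated, because your treatment of the non-uniqueness of lifts is incomplete: the projected geodesic determines $w$ only through the curve $t\mapsto e^{tA_w}\xi^{(1)}$, i.e. through the action of $A_w$ on the cyclic subspace generated by $\xi^{(1)}$, not through $A_w$ itself, so quotienting by $N=\{w: A_w=0\}$ removes only part of the redundancy. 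Concretely, take $\G=\R\times\H$ and the straight line $\gamma(\tau)=(\tau,0,0,0)$, which the paper shows in Example \ref{e2} is a minimizing geodesic (for all time) that is both normal and abnormal. Here $\g_2$ is one-dimensional, $N=\{0\}$, and for every vertical covector $c\in\R$ one has $\xi^{(1)}=(1,0,0)\in\ker A_c$; hence $\gamma$ is the projection of normal extremals with initial covectors $((1,0,0),c)$ of arbitrarily large norm, it never rotates, and no conjugate point ever occurs, no matter how large $\|A_c\|$ is. So any correct version of your claim must be restricted to the frequencies of $A_w$ actually excited by $\xi^{(1)}$, and you then face the genuine remaining problem: given an arbitrary lift, produce a \emph{new} vertical covector $w'$ with $e^{tA_{w'}}\xi^{(1)}=e^{tA_w}\xi^{(1)}$ for all $t$ and $|w'|$ uniformly bounded. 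This is delicate (the relevant cyclic subspace depends on $w$), and nothing in your write-up addresses it.

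For comparison, the paper's proof avoids conjugate-point analysis entirely. Off the cut locus the minimizer from $0$ to $p$ is unique and its \emph{terminal} covector is $\mu(1)=D[d_0^2](p)/2$, which is bounded by the local Euclidean Lipschitz constant of $d_0^2$; the step-2 structure gives $|\mu(0)|=|\mu(1)|$ (cf.\ \cite[(13.9)]{ABB20}), so the initial covector is bounded. Points in the cut locus are handled by approximation, since the cut locus is negligible and limits of minimizers inherit the bound by \cite{RT05}. Finally — and this is exactly the device you are missing — non-uniqueness is resolved by non-branching of analytic geodesics \cite{MR20}: for any minimizer $\gamma$, the restriction $\gamma|_{[0,1/2]}$ is the \emph{unique} minimizer to $\gamma(1/2)$, hence admits a lift with covector bounded by $A$, and the normal extremal with initial covector $2\xi$ then coincides with $\gamma$ on all of $[0,1]$, giving the bound $2A$. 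If you want to salvage your strategy, this restriction-to-a-subsegment argument is the clean way to select one good lift, rather than attempting to bound all lifts of $\gamma$ at once.
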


\begin{proof}
Without loss of generality we can assume $K$ is the CC ball $\overline{B_{CC}(0,M)}$, otherwise just enlarge the set $K$ since it is bounded.

First, note that each length minimizing geodesic in a step 2 Carnot group is normal (cf. \cite[Theorem 2.22]{R14}). In particular, it is the projection of some normal extremal and thus real analytic. 

Second, for any $p \in \overline{B_{CC}(0,M)} \setminus \mathrm{Cut}_0$, where $\mathrm{Cut}_0$ denotes the cut locus of the identity $0$ defined by
\[
\mathrm{Cut}_0 := \{p \in \G\,| \,  d_0^2 \mbox{ is not smooth in a neighborhood of $p$}\},
\]
we know (\cite[Lemma 2.15]{R14}, \cite[Proposition 2]{RT05})  that there exists a unique length minimizing geodesic $\gamma : [0,1] \to \G$ from $0$ to $p$. It is thus a projection of some normal extremal $\mu: [0,1] \to T^* \G$ with $\mu(1) = D[d_0^2](p)/2 \in T_p^* \G$. Here we recall that $D$ denotes the differential. In fact, in the setting of step 2 Carnot groups, after choosing suitable coordinates, we can identify $T^* \G$ with $\R^n \times \R^n$ and write down the Hamilton equation \eqref{sRH} explicitly. See \cite[\S~13.1]{ABB20} for more details.  Then it follows from \cite[(13.9)]{ABB20} that $|\mu(1)| = |\mu(0)|$. Hence, to prove $|\mu(0)|$ is uniformly bounded, it is sufficient to show that $|\mu(1)|$ is uniformly bounded. In fact, this boundedness comes from the local Lipschitz continuity of $d_0^2$ with respect to Euclidean norm (cf. \cite[Corollary 12.14]{ABB20}).

Concerning points $p\in \overline{B_{CC}(0,M)} \cap \mathrm{Cut}_0$, since the cut locus has measure zero (\cite[Proposition 15]{R13}), we can always approximate $p$ by points in $\overline{B_{CC}(0,M)} \setminus \mathrm{Cut}_0$. Since $d_0^2$ is locally Lipschitz, up to a subsequence, the differentials also converge. It is guaranteed by \cite[Proposition 4]{RT05} that the limiting length minimizing geodesic has the same bound as before.

In particular, what we have proved so far implies that if the length minimizing geodesic from $0$ to $p$ is unique, then we can find such a $A$ in the assertion. 

Finally we show that the bound $2A$ works for all remaining points.  Let $\gamma$ be an arbitrary length minimizing geodesic. By real analyticity, length minimizing geodesics cannot branch (cf. \cite[Proposition 10]{MR20}) and thus for every $s \in (0,1)$, the restriction $\gamma|_{[0,s]}$ (aftering reparameterization to $[0,1]$) is a unique length minimizing geodesic from $0$ to $\gamma(s)$. In particular, we choose $s = 1/2$ and by the result above, $\gamma|_{[0,1/2]}$ is the projection of some normal extremal with the initial covector $\xi$ with $|\xi| \le A$. From \cite[Lemma 8.35]{ABB20}, the projection of the normal extremal with the initial covector $2\xi$ is the same as $\gamma$ on $[0,1/2]$. By real analyticity again they coincide on the whole $[0,1]$. This ends the proof with the new bound $2A$.
\end{proof}

Let us now prove Theorem \ref{t2}, using the $C$-nearly horizontal semiconcavity result in \cite{BR18}. 


\begin{proof}[Proof of Theorem \ref{t2}]
Similar to the proof of Theorem \ref{t1}, we need only to prove the local estimates corresponding to the ones in Lemma \ref{l1}. 
It follows from \cite[Proposition 2.9]{BR18} and Lemma \ref{lp29} that $d_0^2$ on step 2 Carnot group $\G$ is $C$-nearly horizontally semiconcave and locally  Lipschitz w.r.t. the Euclidean norm in $B_{CC}(0,2)$ (see \cite[Corollary 12.14]{ABB20}), thus h-semiconcave in $B_{CC}(0,2)$ by Proposition \ref{propCnhs}. Then for any $p \in S =  \partial B_{CC}(0,1)$ and $h \in \HH_0$ with $|h| < 1/2$, we have $[p \cdot h^{-1}, p \cdot h] \subset B_{CC}(0,2) $ since
\[
|d_0(p \cdot th) - d_0(p)| \le d(p \cdot th,p) = d_0(th) = t|h| < 1, \qquad \forall \, t \in [-1,1].
\]
From the h-semiconcavity, there exists a $C_\#$ such that
\[
d_0^2(p \cdot h) + d_0^2(p \cdot h^{-1}) - 2 d_0^2(p) \le C_\# |h|^2, \qquad \forall \, p \in S, h \in \HH_0, |h| \le 1/2.
\]
This gives the desired estimate and completes the proof of Theorem \ref{t2}.
\end{proof}


The assumption of step 2 in Theorem \ref{t2} is essential. The h-semiconcavity of the square of the CC distance actually fails to hold on the Engel group introduced in Example~\ref{e3}. See \cite[\S~4]{MM16} for details about this observation. 
More precisely, the following result holds. 


\begin{proposition}\label{cp2}
On the Engel group $\E$, there exists a nonzero element $p$ in $\E$ (in the abnormal set) such that the following limit holds:
\begin{equation}\label{cp2 eq1}
 \lim_{h \in \R \setminus \{0\},\ h \to 0} \frac{d_0(p \cdot h e_1) - d_0(p)}{h^2} =+\infty,
\end{equation}
where $e_1 := (1,0,0,0) \in \HH_0 \subset \E$. In particular, for every $r > 0$, the following holds:
\begin{equation}\label{cp2 eq2}
 \lim_{h \to 0+} \frac{d_0^r(p\cdot h e_1) + d_0^r(p \cdot (h e_1)^{-1}) - 2 d_0^r(p)}{h^2} =+\infty.
\end{equation}
\end{proposition}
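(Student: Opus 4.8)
The plan is to exploit the one feature that distinguishes the Engel group from the step~2 case: it carries a non-trivial abnormal minimizing geodesic, and moving off it transversally in the bracket-generating direction $e_1$ raises the distance at a strictly sub-quadratic rate. First I would pin down the abnormal geodesic. Using the bracket relations $[\X_1,\X_2]=\Y$, $[\X_1,\Y]=\Z$, $[\X_2,\Y]=0$ from Example~\ref{e3} together with the Goh condition, one sees that a covector annihilating $\X_1,\X_2$ along an abnormal extremal must also annihilate $\Y=[\X_1,\X_2]$; were the control $u_1$ nonzero it would be forced to annihilate $\Z=[\X_1,\Y]$ as well, hence all of $\g$, a contradiction. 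Thus every non-trivial abnormal minimizer issuing from $0$ has $u_1\equiv 0$, i.e.\ is an integral curve of $\X_2$; since $\X_2(0)=\partial_y$ and $x\equiv 0$ along it, these are exactly the segments of the $y$-axis $t\mapsto(0,t,0,0)$. For $y_0\neq 0$ small enough this segment is length minimizing (this is the content of \cite[\S 4]{MM16}), so I take $p=(0,y_0,0,0)$, a nonzero point of the abnormal set with $d_0(p)=|y_0|>0$.

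Next I would record the perturbed points. A direct computation with the Engel group law of Example~\ref{e3}, using $(he_1)^{-1}=-he_1$, gives
\[
p\cdot h e_1=\Big(h,\ y_0,\ -\tfrac{h y_0}{2},\ \tfrac{h^2 y_0}{12}\Big),\qquad p\cdot (he_1)^{-1}=\Big(-h,\ y_0,\ \tfrac{h y_0}{2},\ \tfrac{h^2 y_0}{12}\Big),
\]
so the single formula $p\cdot h e_1=(h,y_0,-\tfrac{hy_0}{2},\tfrac{h^2y_0}{12})$ covers both signs of $h$. This displaces $p$ transversally to the abnormal geodesic, with an order-$h$ motion in $x$ and $z$ and an order-$h^2$ motion in the top coordinate $s$. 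The crux of the proof is then the estimate that there exist $c>0$ and an exponent $\beta\in(1,2)$ (the precise value, of order $4/3$, being computed in \cite[\S 4]{MM16}) such that
\[
d_0(p\cdot h e_1)-d_0(p)\ge c\,|h|^{\beta}\qquad\text{for all sufficiently small }h\neq 0.
\]
Heuristically, any horizontal competitor reaching $p\cdot he_1$ must produce the net displacement $x(1)=h$, which by itself is cheap, but doing so while also hitting the prescribed value of the highest-order coordinate $s$ forces a length excess over the rigid abnormal minimizer that decays strictly more slowly than $h^2$; its positivity reflects the minimality of the $y$-axis segment to $p$. I would either invoke \cite{MM16} directly or reprove this lower bound by comparing the energy of an arbitrary control pair $(u_1,u_2)$, through the evolution equations for $(x,y,z,s)$, against that of the abnormal minimizer.

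With this estimate in hand, \eqref{cp2 eq1} is immediate: dividing by $h^2$ yields $\frac{d_0(p\cdot he_1)-d_0(p)}{h^2}\ge c\,|h|^{\beta-2}\to+\infty$ as $h\to 0$ from either side. For \eqref{cp2 eq2}, since $d_0(p)>0$ the map $t\mapsto t^{r}$ is smooth near $t=d_0(p)$; writing $a=d_0(p\cdot he_1)-d_0(p)$ and $b=d_0(p\cdot(he_1)^{-1})-d_0(p)$ and Taylor expanding gives
\[
d_0^{r}(p\cdot he_1)+d_0^{r}\big(p\cdot(he_1)^{-1}\big)-2d_0^{r}(p)=r\,d_0(p)^{r-1}(a+b)+O(a^2+b^2).
\]
Applying the estimate to $\pm h$ gives $a,b\ge c|h|^{\beta}>0$ and $a^2+b^2=O(|h|^{2\beta})$; dividing by $h^2$, the leading term grows like $2rc\,d_0(p)^{r-1}|h|^{\beta-2}\to+\infty$ and dominates the remainder because $2\beta-2>\beta-2$. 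This proves \eqref{cp2 eq2}.

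The only substantial difficulty is the sub-quadratic lower bound of the previous paragraph: establishing both the positivity and the strictly-slower-than-quadratic growth of $d_0$ as one leaves the abnormal geodesic. This is a genuinely sub-Riemannian phenomenon with no Euclidean counterpart, and it is exactly where the failure of the step~2 hypothesis enters; all remaining steps are bookkeeping with the group law and a one-variable Taylor expansion.
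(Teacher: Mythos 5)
Your proposal is correct and takes essentially the same route as the paper: the paper's entire proof consists of citing \cite[Theorem 1.2]{MM16} for \eqref{cp2 eq1} and then deducing \eqref{cp2 eq2} by the mean value theorem, just as you defer the hard sub-quadratic lower bound to \cite{MM16} (your explicit identification of the abnormal set and the group-law computation are correct but are elaboration the paper skips) and obtain \eqref{cp2 eq2} by a Taylor expansion of $t\mapsto t^r$ at $d_0(p)>0$. One minor repair: the remainder bound $a^2+b^2=O(|h|^{2\beta})$ does not follow from the lower bounds $a,b\ge c|h|^\beta$; instead use $a,b\le d\bigl(p\cdot(\pm h)e_1,p\bigr)=|h|$ by the triangle inequality, so the remainder is $O(h^2)$, which after dividing by $h^2$ stays bounded and is still dominated by the divergent leading term $r\,d_0(p)^{r-1}(a+b)/h^2\ge 2rc\,d_0(p)^{r-1}|h|^{\beta-2}$.
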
 

We refer to \cite[Theorem 1.2]{MM16} for the proof of \eqref{cp2 eq1}. As an immediate consequence, one can obtain \eqref{cp2 eq2} by the mean value theorem.




\section{Further consequences and generalization}\label{sec:rc}
\setcounter{equation}{0}

Applying Theorem \ref{tp2} to the function $d_0^2=d^2(\cdot, 0)$, a direct consequence of Theorem \ref{t2} is the following corollary.

\begin{corollary} \label{c1}
Let $d$ be the CC distance of a step 2 Carnot group $\G$. For $d_0=d(\cdot, 0)$,   there exists a constant $C > 0$ such that 
$$- (\nabla^2_H [d_0^2])^* \ge -C \, \I_m 
\ \  
 \textrm{in $\G$ holds in the viscosity sense.}
$$ 
In particular, $- \Delta_H [d_0^2] \geq -mC$ in $\G$ holds in the viscosity sense, where $m$ denotes the dimension of the first layer of the Lie algebra of $\G$. Here, the viscosity inequalities mean that $- (\nabla^2_H \varphi(p))^* \ge -C \, \I_m$ and $-\Delta_H \varphi(p) \ge -mC$  hold for any  $\varphi \in C^2(\G)$ and $p \in \G$ such that $d_0^2 - \varphi$ attains a local minimum at $p$. 
\end{corollary}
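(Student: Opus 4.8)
The plan is to read off both inequalities directly from the h-semiconcavity of $d_0^2$ established in Theorem \ref{t2}, using the viscosity characterization of Theorem \ref{tp2} and then passing to the trace. For the matrix inequality there is essentially nothing to do beyond combining the two results: Theorem \ref{t2} gives that $d_0^2$ is h-semiconcave in $\G$ with some h-semiconcavity constant $C \ge 0$, and Theorem \ref{tp2} asserts that this is equivalent to $-(\nabla_H^2 d_0^2)^* \ge -C\,\I_m$ in the viscosity sense, i.e. $-(\nabla_H^2\varphi(p))^* \ge -C\,\I_m$ for every $\varphi \in C^2(\G)$ and $p \in \G$ such that $d_0^2 - \varphi$ attains a local minimum at $p$. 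If the constant produced is $C = 0$, I would simply replace it by any positive number, since enlarging $C$ only weakens the bound and yields the claimed $C>0$.

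For the ``in particular'' statement I would take the trace of the matrix inequality. Two elementary observations make this work. First, by the definition of the symmetrized horizontal Hessian in \eqref{sect}, its diagonal entries are exactly $\X_i^2\varphi$ (the symmetrization affects only the off-diagonal terms), so that $\mathrm{tr}\big((\nabla_H^2\varphi(p))^*\big) = \sum_{i=1}^m \X_i^2\varphi(p) = \Delta_H\varphi(p)$. Second, the trace is monotone with respect to the Loewner order: if $A \le B$ as symmetric matrices, then $B-A$ is positive semidefinite, whence $\mathrm{tr}(A) \le \mathrm{tr}(B)$. Now fix any admissible test pair $(\varphi,p)$, that is $\varphi \in C^2(\G)$ with $d_0^2 - \varphi$ attaining a local minimum at $p$. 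The matrix inequality gives $(\nabla_H^2\varphi(p))^* \le C\,\I_m$, and taking traces yields $\Delta_H\varphi(p) \le mC$, equivalently $-\Delta_H\varphi(p) \ge -mC$. Since the class of test functions entering the two viscosity inequalities is identical, this is precisely the scalar inequality $-\Delta_H[d_0^2] \ge -mC$ in the viscosity sense, with the same constant $C$.

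I do not expect any genuine obstacle here, as the corollary is a formal consequence of Theorem \ref{t2}. The only points requiring (routine) verification are the identity $\mathrm{tr}\big((\nabla_H^2\varphi)^*\big) = \Delta_H\varphi$, which follows from the definitions of $(\nabla_H^2\varphi)^*$ in \eqref{sect} and of the sub-Laplacian $\Delta_H\varphi = \sum_{i=1}^m \X_i^2\varphi$, and the fact that passing to the trace does not alter the set of admissible test functions, so that the viscosity interpretation is preserved verbatim.
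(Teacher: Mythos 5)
Your proposal is correct and follows essentially the same route as the paper: invoke Theorem \ref{t2} together with the viscosity characterization in Theorem \ref{tp2} to get the matrix inequality, then take the trace (using $\mathrm{tr}\bigl((\nabla_H^2\varphi)^*\bigr)=\Delta_H\varphi$ and monotonicity of the trace under the Loewner order) to obtain the sub-Laplacian bound with the same test functions. Your additional remarks on enlarging $C\ge 0$ to a strictly positive constant and on verifying that the diagonal entries of the symmetrized horizontal Hessian are $\X_i^2\varphi$ are correct routine details that the paper leaves implicit.
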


\begin{proof}
From Theorem \ref{t2} we know that $d_0^2$ is h-semiconcave. Then by (ii) of Theorem \ref{tp2}, there exists a constant $C \ge 0$ such that for any $\varphi \in C^2(\G)$ and $p \in \G$ with the property that $d_0^2 - \varphi$ has a local minimum at $p$, we have $-(\nabla_H^2 \varphi(p))^* \geq - C \, \I_m$. Taking the trace on both sides, we obtain $- \Delta_H \varphi(p) + m C \ge 0$, which concludes the proof. 
\end{proof}

\begin{corollary} \label{c2}
Let $d$ be the CC distance of a step 2 Carnot group $\G$. For $d_0=d(\cdot, 0)$, there exists a constant $C > 0$ such that 
\[
(\nabla^2_H [d_0^2] )^*\le C \, \I_m,
\]
and 
\[
\Delta_H [d_0^2] \le mC
\]
hold almost everywhere in $\G$. 
\end{corollary}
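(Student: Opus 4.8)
The plan is to upgrade the one-sided viscosity bound of Corollary \ref{c1} to a genuine pointwise bound holding at almost every point, by exploiting the fact that $d_0^2$ is twice differentiable (in the Euclidean, i.e. Alexandrov, sense) almost everywhere.

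First I would record the two ingredients already available. By Theorem \ref{t2} the function $d_0^2$ is h-semiconcave with some constant $C>0$, so that
\[
d_0^2(p\cdot h) + d_0^2(p\cdot h^{-1}) - 2d_0^2(p) \le C|h|^2
\]
for all $p\in\G$ and $h\in\HH_0$; and by \cite[Corollary 12.14]{ABB20} the function $d_0^2$ is locally Lipschitz with respect to the Euclidean norm. The next point is that $d_0^2$ is twice Euclidean differentiable at almost every $p\in\G$: indeed \cite{CR08, FR10} show that $d_0^2$ is locally Euclidean semiconcave on the open set $\G\setminus\mathrm{Abn}_0$, where $\mathrm{Abn}_0$ denotes the abnormal set of the identity, so Alexandrov's theorem yields twice differentiability almost everywhere there, while $\mathrm{Abn}_0$ has Lebesgue measure zero in a step 2 group.

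Next, at any point $p$ of twice Euclidean differentiability I would feed the second-order Taylor expansion of $d_0^2$ at $p$ into the group expansion. Since $\tau\mapsto p\cdot\tau h = (p^{(1)}+\tau h,\, p^{(2)}+\tau\mathcal{B}(p^{(1)},h))$ is a polynomial curve, the computation in \eqref{estth3} remains valid with a true $o(|h|^2)$ remainder, and here $(\nabla_H^2[d_0^2](p))^*$ is defined pointwise through \eqref{sect} from the Euclidean gradient and Hessian of $d_0^2$ at $p$. Adding the expansions at $h$ and $h^{-1}$, inserting $\tau h$ in place of $h$ in the h-semiconcavity inequality, dividing by $\tau^2$ and letting $\tau\to0$ gives $\langle(\nabla_H^2[d_0^2](p))^*h,h\rangle\le C|h|^2$ for every $h\in\HH_0$, that is $(\nabla_H^2[d_0^2](p))^*\le C\,\I_m$. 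Taking the trace yields $\Delta_H[d_0^2](p)\le mC$. As both estimates hold at almost every $p$, the corollary follows.

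I expect the main obstacle to be the second step, namely guaranteeing twice Euclidean differentiability almost everywhere in spite of the breakdown of Euclidean semiconcavity near the identity (as illustrated in Figure \ref{fig1}) and on the abnormal set. This reduces to the fact that $\mathrm{Abn}_0$ is Lebesgue-null in step 2 Carnot groups; once that is in hand, Alexandrov's theorem applies off a null set, the identity being a single and hence negligible point, and the pointwise argument above runs unchanged.
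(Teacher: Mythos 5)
Your core mechanism is sound, and it is genuinely different from the paper's: at a point where $d_0^2$ admits a Euclidean second-order (Alexandrov) expansion, the step 2 identity $p\cdot\tau h=p+\tau\,(h,\mathcal{B}(p^{(1)},h))$ makes $\tau\mapsto p\cdot\tau h$ a straight line, so your Taylor argument combined with the h-semiconcavity inequality of Theorem \ref{t2} does yield $(\nabla_H^2[d_0^2](p))^*\le C\,\I_m$ at every such point, and the trace gives the sub-Laplacian bound. The genuine gap is the measure-theoretic input: you assert, without proof or citation, that the abnormal set $\mathrm{Abn}_0$ is Lebesgue-null in a step 2 Carnot group. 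This is not a formality; it is a Sard-type theorem for the endpoint map (true for step 2 groups, where the abnormal set is in fact contained in a proper algebraic subvariety, but nontrivial and nowhere established in this paper, which only identifies $\mathrm{Abn}_0=\{0\}$ in the ideal case). A second, smaller unaddressed point: to invoke \cite{CR08, FR10} on $\G\setminus\mathrm{Abn}_0$ you need this set to be open, i.e.\ $\mathrm{Abn}_0$ closed, which also requires an argument (limits of singular minimizing controls are singular minimizing controls).

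The paper's own proof avoids both issues by working with the cut locus $\mathrm{Cut}_0:=\{p\in\G \mid d_0^2 \mbox{ is not smooth in a neighbourhood of } p\}$ instead of the abnormal set: $\mathrm{Cut}_0$ has measure zero in step 2 by \cite[Proposition 15]{R13} (a fact already used in the proof of Lemma \ref{lp29}), and off $\mathrm{Cut}_0$ the function $d_0^2$ is smooth by definition, so the viscosity inequality of Corollary \ref{c1} holds classically there --- one simply tests with $\varphi=d_0^2$ itself, which touches $d_0^2$ from below. No Alexandrov theorem, no Euclidean semiconcavity off the abnormal set, and no Sard property are needed. Your argument can be repaired in the same spirit: replace $\mathrm{Abn}_0$ by $\mathrm{Cut}_0$ and cite \cite[Proposition 15]{R13}; at smooth points your two-sided expansion is classical and the rest of your pointwise computation runs verbatim. (Alternatively one can check $\mathrm{Abn}_0\subset\mathrm{Cut}_0$ --- smoothness of $d_0^2$ near $p$ produces a smooth local right inverse of the endpoint map, forcing the minimizer to $p$ to be non-singular --- but this again routes the nullity of $\mathrm{Abn}_0$ through \cite{R13}, so nothing is gained over the cut-locus version.)
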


\begin{proof}
We first define the {\it cut locus (of the identity $0$)} as follows:
\[
\mathrm{Cut}_0 := \{p \in \G\,| \,  d_0^2 \mbox{ is not smooth in a neighbourhood of $p$}\}.
\]
It is known \cite[Proposition 15]{R13} that the cut locus has measure zero if the step of a Carnot group is two. 
From Corollary \ref{c1} we know that $ - (\nabla^2_H [d_0^2])^* \ge -C \, \I_m$ and 
$- \Delta_H [d_0^2] \ge -mC$ hold in the viscosity sense. Therefore, by standard techniques of viscosity solution theory,   $ - (\nabla^2_H [d_0^2](p))^* \ge -C \, \I_m$ and $- \Delta_H [d_0^2](p) \ge -mC$ hold at all points $p$ where $d_0^2$ is smooth including all $p \in \G\setminus \mathrm{Cut}_0$. Then the conclusion of the corollary follows.
\end{proof}

It is possible to generalize our result in Theorem \ref{t2} for other functions related to the CC distance. We present the following generalization in a bounded open set of a step 2 Carnot group. 

\begin{corollary} \label{compof}
Let $\G$ be a step 2 Carnot group with CC distance $d$. Let $\Omega$ be a bounded open set of $\G$. Assume that $\Psi:[0,+\infty)\to [0,+\infty)$ is an increasing function such that its even extension $\widetilde{\Psi}: \R \to [0,+\infty)$ is a locally semiconcave function in $\R$, as defined in \eqref{lsemiconc}. Then, $\Psi(d(\cdot, 0))$ is h-semiconcave in $\Omega$.

\end{corollary}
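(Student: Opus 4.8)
The plan is to reduce the statement to a single local second-difference estimate and then combine the h-semiconcavity of $d_0^2$ (Theorem~\ref{t2}) with the one-dimensional semiconcavity of $\widetilde{\Psi}$. Throughout I fix $p\in\Omega$ and $h\in\HH_0$ and abbreviate $a:=d_0(p\cdot h)$, $b:=d_0(p\cdot h^{-1})$, $c:=d_0(p)$. As in the proof of Theorem~\ref{t1}, by the viscosity characterisation of h-semiconcavity in Theorem~\ref{tp2} it suffices to produce a constant $C'>0$, independent of $p$, with
\[
\Psi(a)+\Psi(b)-2\Psi(c)\le C'|h|^2
\]
for all $h$ with $|h|$ small enough. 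Since $\Omega$ is bounded and $d_0$ is continuous, $d_0(\Omega)$ is contained in some interval $[0,M]$; the function $\Psi\circ d_0$ is continuous (as $\widetilde{\Psi}$ is locally semiconcave, hence continuous, and $\Psi$ is increasing), so in particular it lies in $LSC(\Omega)$, and $\widetilde{\Psi}$ is semiconcave on $[-(M+1),M+1]$ with a single constant $K$.

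Next I would record two inputs. First, Theorem~\ref{t2} gives a constant $C>0$ with $a^2+b^2-2c^2\le C|h|^2$, while the reverse triangle inequality together with $d_0(h)\le|h|$ for $h\in\HH_0$ yields $|a-c|\le|h|$ and $|b-c|\le|h|$. Second --- and this is where the evenness of the extension is used --- the hypothesis that $\widetilde{\Psi}$ is even and semiconcave forces its one-sided derivative at the origin to vanish: evenness makes the superdifferential at $0$ symmetric about $0$, semiconcavity gives $D^-\widetilde{\Psi}(0)\ge D^+\widetilde{\Psi}(0)$, and monotonicity gives $\Psi'(0+)\ge 0$, so altogether $\Psi'(0+)=0$. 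Since $\widetilde{\Psi}-\tfrac{K}{2}t^2$ is concave and hence has non-increasing slope, every superdifferential element $p_c$ of $\widetilde{\Psi}$ at a point $c\in[0,M]$ then satisfies $0\le p_c\le Kc$.

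The main estimate follows from the pointwise semiconcavity inequality of $\widetilde{\Psi}$ at $c$,
\[
\Psi(a)+\Psi(b)-2\Psi(c)\le p_c\,(a+b-2c)+\tfrac{K}{2}\big[(a-c)^2+(b-c)^2\big].
\]
The last bracket is at most $K|h|^2$ by the triangle estimate. For the first term I would split on the sign of $a+b-2c$: if $a+b-2c\le 0$ it is nonpositive because $p_c\ge 0$; if $a+b-2c>0$, I use the identity $a^2+b^2-2c^2=2c(a+b-2c)+(a-c)^2+(b-c)^2$ to get $2c(a+b-2c)\le a^2+b^2-2c^2\le C|h|^2$, hence $c(a+b-2c)\le\tfrac{C}{2}|h|^2$, and multiplying $p_c\le Kc$ by the positive quantity $a+b-2c$ gives $p_c(a+b-2c)\le Kc(a+b-2c)\le\tfrac{KC}{2}|h|^2$; the degenerate case $c=0$ (i.e. $p=0$, where $a=b=|h|$ and $p_0=0$) is immediate. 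Collecting the bounds gives the claim with $C'=K+\tfrac{KC}{2}$.

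I expect the genuine obstacle to be precisely the first-order term $p_c(a+b-2c)$. By the triangle inequality $a+b-2c$ is a priori only $O(|h|)$, not $O(|h|^2)$ --- and indeed along the centre of the Heisenberg group it really is of order $-|h|$, as Proposition~\ref{cp1} shows --- so this term cannot be controlled naively. The resolution is the cancellation between the factor $c$ hidden in the bound $p_c\le Kc$ (which is exactly what the even semiconcave extension provides) and the factor $c$ produced by the h-semiconcavity of the \emph{square} $d_0^2$. This is why both hypotheses are indispensable: step $2$ enters through Theorem~\ref{t2}, and passing to the even extension of $\Psi$ is what tames the first-order behaviour near the origin.
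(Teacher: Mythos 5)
Your proof is correct, and its core mechanism coincides with the paper's: both arguments rest on the same cancellation, namely that evenness plus semiconcavity of $\widetilde{\Psi}$ forces the derivative (or every supergradient) at the origin to vanish, giving the linear bound $0\le p_c\le Kc$, after which the factor $c$ is absorbed via the elementary inequality $2c(a+b-2c)\le a^2+b^2-2c^2$, which converts first-order increments of $d_0$ into increments of $d_0^2$ so that Theorem \ref{t2} can be invoked; both proofs then conclude through the viscosity characterization of Theorem \ref{tp2}. Where you genuinely deviate is in the treatment of nonsmooth $\Psi$: the paper first assumes $\widetilde{\Psi}\in C^2(\R)$, runs the estimate using $0\le\widetilde{\Psi}'(\tau)\le C(\Omega)\tau$, and then handles general $\widetilde{\Psi}$ by mollification combined with a stability argument for viscosity solutions, whereas you run the estimate once and for all with an arbitrary supergradient $p_c\in\partial\widetilde{\Psi}(c)$, which eliminates the approximation and stability step entirely and makes the proof one-pass and self-contained. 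The small price you pay is the case split on the sign of $a+b-2c$ (needed because $p_c\le Kc$ can only be multiplied by a nonnegative factor), which the paper avoids by applying $2c(a-c)\le a^2-c^2$ to each increment separately, with the nonnegative weight $\widetilde{\Psi}'(c)/(2c)$, before summing; this is cosmetic. Your constant $K+KC/2$ matches the paper's $(C/2+1)C(\Omega)$ in form, and your closing remark that the first-order term is genuinely of size $|h|$ along the center (Proposition \ref{cp1}), so that the cancellation is indispensable, correctly identifies why the hypotheses cannot be weakened.
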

\begin{proof}
We still use the notation $d_0=d(\cdot, 0)$ in $\G$.
We first assume that $\widetilde{\Psi} \in C^2(\R)$. Since $\widetilde{\Psi}$ is even, it is easy to obtain $\widetilde{\Psi}'(0) = 0$. Now let $T(\Omega) := \sup_{p \in \Omega} d(p) < +\infty$. On the compact set $[0,T(\Omega)]$, by the local semiconcavity there exists a constant $C(\Omega ) > 0$ such that $\widetilde{\Psi}'' \le C(\Omega)$. Consequently this implies
\begin{align}\label{estdp}
0 \le \widetilde{\Psi}'(\tau) = \widetilde{\Psi}'(\tau) - \widetilde{\Psi}'(0) \le C(\Omega) \tau, \qquad \forall \, \tau \in [0,T(\Omega)].
\end{align}
Moreover, for $p \in \Omega \setminus \{0\}$ and $h \in \HH_0$ such that $[p \cdot h^{-1}, p \cdot h] \subset \Omega$, by Taylor expansion we have
\begin{align}\label{estdiff}
&\Psi(d_0(p \cdot h)) - \Psi(d_0(p)) \le  \widetilde{\Psi}'(d_0(p))(d_0(p \cdot h) - d_0(p)) + \frac{C(\Omega )}{2}(d_0(p \cdot h) - d_0(p))^2.
\end{align}

Notice that
\[
\widetilde{\Psi}'(d_0(p))(d_0(p \cdot h) - d_0(p))
\le  \frac{\widetilde{\Psi}'(d_0(p))}{2 d_0(p)}  (d_0^2(p \cdot h) - d_0^2(p)),
\]
and, also by the fact $h \in \HH_0$, we have
\[
|d_0(p \cdot h) - d_0(p)| \le d(p \cdot h,p) = d_0(h) = |h|,
\]
since it is easy to see that $t \to th, t \in [0,1]$ is a length minimizing geodesic. Inserting these two estimates into \eqref{estdiff}, we obtain
\[
\Psi(d_0(p \cdot h)) - \Psi(d_0(p)) \le  \frac{\widetilde{\Psi}'(d_0(p))}{2 d_0(p)} (d_0^2(p \cdot h) - d_0^2(p)) + \frac{C(\Omega )}{2} |h|^2.
\]
Similarly we have 
\[
\Psi(d_0(p \cdot h^{-1})) - \Psi(d_0(p)) \le  \frac{\widetilde{\Psi}'(d_0(p))}{2 d_0(p)} (d_0^2(p \cdot h^{-1}) - d_0^2(p)) + \frac{C(\Omega )}{2} |h|^2.
\]
Adding them together and apply Theorem \ref{t2} as well as \eqref{estdp},  we have
\begin{align*}
&\Psi(d_0(p \cdot h)) + \Psi(d_0(p \cdot h^{-1})) - 2\Psi(d_0(p)) \\
\le \, & \frac{\widetilde{\Psi}'(d_0(p))}{2 d_0(p)} (d_0^2(p \cdot h) + d_0^2(p \cdot h^{-1}) - 2 d_0^2(p)) + C(\Omega ) |h|^2 \\
\le \, & \frac{\widetilde{\Psi}'(d_0(p))}{2 d_0(p)} C|h|^2  + C(\Omega ) |h|^2 \le (C/2 + 1) C(\Omega ) |h|^2,
\end{align*}
under the assumption that $p \in \Omega \setminus \{0\}$ and $h \in \HH_0$ such that $[p \cdot h^{-1}, p \cdot h] \subset \Omega$, where $C$ is the h-semiconcavity constant of $d_0^2$. This estimate still holds for $p = 0$, since $\widetilde{\Psi}'(d_0(p)) = \widetilde{\Psi}'(0) = 0$ in \eqref{estdiff}. This ends the proof for the case $\widetilde{\Psi} \in C^2(\R)$. 

For general $\widetilde{\Psi}$, it is sufficient to approximate by standard mollification. Take $\phi_\varepsilon(\cdot) = \varepsilon^{-1} \phi(\cdot /\varepsilon)$, where $\phi$ is an even, nonnegative, smooth function with compact support such that $\int_{\R} \phi \,d x = 1$ and decreasing on $[0,+\infty)$. For every $\varepsilon \in (0,1)$, we can show that $\phi_\varepsilon \ast \widetilde{\Psi}$ is smooth, even, increasing on $[0,+\infty)$, and locally semiconcave with the semiconcavity constant on any compact set independent of $
\varepsilon \in (0,1)$. Since $\phi_\varepsilon \ast \widetilde{\Psi}\to \widetilde{\Psi}$ locally uniformly as $\varepsilon\to 0$, we can apply the standard stability argument for viscosity solutions to conclude our proof. 
\end{proof}

\begin{remark}
Typical examples of the function $\Psi$ satisfying the assumptions of Corollary \ref{compof} are $\Psi(\tau)=C \tau^\gamma$ with $C > 0$ and $\gamma \ge 2$.
    \end{remark}


We conclude this section by remarking that the strategy employed here can be generalized to cover step 2 sub-Riemannian manifolds, such as the rototraslation geometry introduced by Citti and Sarti \cite{cittiSarti} to model the visual cortex. The main difference is that, due to the absence of dilations, the result would only hold locally, i.e., within compact sets. In this context, the notion of h-semiconcavity needs to be adapted to deal with the lack of an algebraic structure, using the definition of $\cX$-semiconcavity introduced in \cite{BD11}. Although the definition in \cite{BD11} is given for H\"ormander vector field structures in $\R^n$, the same notion can be extended to general manifolds in local coordinates, as it remains invariant under changes of coordinates. By similar arguments, we can prove the following generalization of Theorem \ref{t2}.

\begin{theorem}
Given a manifold $M$ equipped with a step 2 analytic sub-Riemannian  structure $(\D, g)$. Then for every $x \in M$, there exists a neighborhood $U_x$ such that 
\[
\D = \mathrm{span} \{\X_1, \ldots, \X_m\}
\]
and $d_x^2$ is $\mathcal{X}$-semiconcave on $U_x$. Here $\mathcal{X} := \{\X_1, \ldots, \X_m\}$.
\end{theorem}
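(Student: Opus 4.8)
The plan is to reproduce the argument of Theorem~\ref{t2}, carried out purely locally and in suitable coordinates, exploiting that $\mathcal{X}$-semiconcavity is invariant under analytic changes of coordinates. First I would fix $x \in M$ and, by analyticity, choose a relatively compact coordinate chart $U_x$, identified with an open subset of $\R^n$, in which $\D = \mathrm{span}\{\X_1, \ldots, \X_m\}$ with $\X_1, \ldots, \X_m$ analytic. Writing $\Phi_p(h)$ for the time-one flow from $p$ of the horizontal field $\sum_{i} h_i \X_i$, the definition of $\mathcal{X}$-semiconcavity from \cite{BD11} amounts, after the standard viscosity reduction used in the proof of Theorem~\ref{t1}, to a uniform estimate of the form $d_x^2(\Phi_p(h)) + d_x^2(\Phi_p(-h)) - 2 d_x^2(p) \le C|h|^2$ for small $h$; equivalently, it is the viscosity bound $-(\nabla_{\mathcal{X}}^2 d_x^2)^* \ge -C\,\I_m$ on $U_x$, which is the analogue of Theorem~\ref{tp2} established in \cite{BD11}.

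The two geometric ingredients of the step 2 hypothesis transfer with no essential change. Since $(\D,g)$ is step 2, every length minimizing geodesic is normal, a general feature of step 2 structures (cf. \cite[Theorem 2.22]{R14}), hence, by analyticity, real analytic and non-branching (cf. \cite[Proposition 10]{MR20}). I would then prove the local analogue of Lemma~\ref{lp29}: on a fixed compact subset of $U_x$ there is a uniform bound $A$ on the initial covectors of the normal extremals projecting to minimizing geodesics issued from $x$. The argument follows Lemma~\ref{lp29} line by line — bound $\mu(1) = D[d_x^2](p)/2$ at points off the cut locus via the local Euclidean Lipschitz continuity of $d_x^2$ (\cite[Corollary 12.14]{ABB20}), transfer the bound to $\mu(0)$ by conservation of the sub-Riemannian Hamiltonian along the extremal (using the uniform equivalence of the dual metric and the Euclidean norm on the compact chart), and pass to the limit at cut-locus points using that the cut locus has measure zero (\cite[Proposition 15]{R13}) together with non-branching.

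Equipped with this uniform covector bound, I would invoke \cite[Proposition 2.9]{BR18} verbatim to conclude that $d_x^2$ is $C$-nearly horizontally semiconcave on a neighborhood of $x$, and then upgrade to $\mathcal{X}$-semiconcavity through the manifold version of Proposition~\ref{propCnhs}(ii): local Euclidean Lipschitz continuity absorbs the second-order remainder of the $C^2$ touching data $\phi_p,\psi_p$, while matching the first- and second-order Taylor coefficients along the flows $\Phi_p$ delivers the quadratic upper bound above.

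The hard part is precisely this last upgrade. In the Carnot group setting, Proposition~\ref{propCnhs}(ii) exploited the exact identity $p \cdot h = (p^{(1)} + h,\, p^{(2)} + \mathcal{B}(p^{(1)}, h))$, so that $L_p$ restricted to the horizontal plane was an exact affine-plus-bilinear map and $p + DL_p(0)h = p \cdot h$ held identically. On a general step 2 manifold no such algebraic identity survives: the horizontal flow $h \mapsto \Phi_p(h)$ agrees with its second-order Taylor polynomial only up to a $C^2$-controlled $o(|h|^2)$ error depending on the $\X_i$. I would therefore re-derive the second-order expansion of $d_x^2$ directly from the vector fields and verify that the Euclidean Lipschitz bound still dominates the discrepancy between this intrinsic expansion and the flat one; the analyticity and step 2 hypotheses are exactly what guarantee the $C^2$ regularity and the uniform constants needed to make the touching functions $\phi_p,\psi_p$ of Definition~\ref{Cnhs} available with bounds independent of $p$ on the compact set.
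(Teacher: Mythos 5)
The paper does not actually write out a proof of this theorem: it only remarks that the strategy of Theorem \ref{t2} carries over once h-semiconcavity is replaced by the $\cX$-semiconcavity of \cite{BD11} and the conclusion is localized because dilations are unavailable. Your proposal reconstructs exactly that intended argument --- a local analogue of Lemma \ref{lp29}, then \cite[Proposition 2.9]{BR18}, then a manifold version of Proposition \ref{propCnhs}(ii) --- so the route is the same as the paper's. Your treatment of the one genuinely new point is also correct: on a manifold the time-one horizontal flow $\Phi_p(h)$ no longer coincides with the affine map $p+DL_p(0)h$, but it agrees with its linearization up to a quadratic error with constants uniform on compact sets (by smoothness of the frame), and the Euclidean Lipschitz bound on $d_x^2$ absorbs this discrepancy exactly as it absorbs the Taylor remainder of $\phi_p$ in the proof of Proposition \ref{propCnhs}.

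There is, however, one step whose justification is false as written. In your local Lemma \ref{lp29} you transfer the bound from $\mu(1)=D[d_x^2](p)/2$ to the initial covector $\mu(0)$ ``by conservation of the sub-Riemannian Hamiltonian \dots using the uniform equivalence of the dual metric and the Euclidean norm on the compact chart.'' The sub-Riemannian Hamiltonian $H(\xi)=\frac{1}{2}\sum_j\langle\xi,\X_j\rangle^2$ is a \emph{degenerate} quadratic form on each cotangent space, vanishing on the annihilator of $\D$; it is therefore not equivalent to the Euclidean norm, and its constancy along the extremal controls only the horizontal components of $\mu(t)$, not $|\mu(0)|$. The identity $|\mu(0)|=|\mu(1)|$ used in Lemma \ref{lp29} is a special feature of the explicit left-invariant Hamilton equations of step 2 Carnot groups (\cite[(13.9)]{ABB20}) and has no pointwise analogue on a general manifold. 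The repair is routine but different: along a normal extremal $\dot{\mu}=-\partial_pH(p,\mu)$, and on the compact chart one has $|\partial_pH(p,\xi)|\le C_K\sqrt{2H(p,\xi)}\,|\xi|$; since $H$ is constant along the extremal and uniformly bounded there (it equals $\frac{1}{2}d_x^2$ of the endpoint), Gronwall's inequality gives $|\mu(0)|\le e^{C_K'}|\mu(1)|$, which restores the uniform covector bound. Alternatively, one can invoke the compactness of minimizing covectors that \cite{BR18} itself establishes for two-step analytic sub-Riemannian structures. With that replacement your argument goes through.
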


\section{Application to Hamilton-Jacobi equations}\label{sec:app}
\setcounter{equation}{0}

In this section, we study h-semiconcavity of the viscosity solutions for a class of time-dependent Hamilton-Jacobi equations of the form:
\begin{equation}
\label{H-J_general}
\left\{
\begin{aligned}
&u_t+\Phi\big(|\nabla_H u|)=0,\quad \mbox{in} \quad (0,+\infty) \times \G,\\
&u(0,\cdot)=g, \quad \mbox{on} \quad \{0\} \times \G,
\end{aligned}
\right.
\end{equation}
where $u_t$ denotes the time derivative of $u$, $\nabla_H u$ is the horizontal gradient in a (step 2) Carnot group $\G$, and $\Phi: [0,+\infty) \to [0,+\infty)$ is a continuous, convex, non-decreasing function such that $\Phi(0) = 0$.
We assume throughout this section that $g\in LSC(\G)$, where we recall that $LSC(A)$ denotes the set of lower semicontinuous functions in a set $A$. In \cite{D07}  (see also  \cite{MS02} for the case of the Heisenberg group)
it was proved that, if $g\in LSC(\G)$ and 
\begin{equation}
\label{DatumSuperlinear}
\exists C>0 \quad \text{such that} \quad g(p) \ge -C (1 + d_0(p)) \quad \text{holds for all $p \in \G$}, \
\end{equation}
then the viscosity solution $u\in LSC([0, +\infty)\times\G)$ of the Cauchy problem \eqref{H-J_general} can be obtained by the (metric) Hopf-Lax formula
\begin{align}\label{HLF}
u(t,p) = \inf_{q \in \G} \left[  g(q) +t\Phi^*\left( \frac{d(p,q)}{t}\right)\right], \quad (t, p)\in [0, +\infty)\times \G, 
\end{align}
where $\Phi^*$ is the Legendre-Fenchel function associated to $\Phi$, that is defined by 
\begin{equation}\label{legendre}
\Phi^*(s):=\sup_{\tau\geq 0}\big\{ s\tau-\Phi(\tau)\}, \quad s\geq 0.
\end{equation}
For the uniqueness, we refer to \cite{Cutri}, where comparison principles are proved for the Cauchy problem with continuous initial data.
Using Theorem \ref{t2}, we prove that, under suitable conditions on $
\Phi^*$, for all $t>0$, the viscosity solution of problem \eqref{H-J_general} given by \eqref{HLF} is h-semiconcave in space. For the convenience of the reader, we will first show the result in the easiest case when $\Phi(s)=s^2/2$ for $s\geq 0$ and then study the case of a more general $\Phi$. 

Let us first recall another known result for the Hopf-Lax function.

\begin{lemma}[\cite{D07}]\label{D07}
Let $\G$ be a step 2 Carnot group with CC distance $d$. Let $d_0=d(\cdot, 0)$ in $\G$. Assume that $g\in LSC(\G)$ and satisfies \eqref{DatumSuperlinear}.  Then $u\in LSC([0, +\infty)\times\G)$ and there exists a constant $C'>0$ such that
\begin{align}\label{lowu}
u(t,p) \ge - C' (1 + d_0(p) + t), \quad \forall \, p \in \G, t > 0.
\end{align}
Moreover, if $g\in LSC(\G)$ is bounded, then the infimum in \eqref{H-J_general} is actually a minimum and it is attained in a CC ball centred at the point $p$ with radius depending only on $\Phi$ and $t$.
\end{lemma}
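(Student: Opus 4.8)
The plan is to extract all three assertions directly from the Hopf-Lax formula \eqref{HLF}, using only the definition \eqref{legendre} of $\Phi^*$, the growth hypothesis \eqref{DatumSuperlinear}, and the triangle inequality for $d$. The single recurring ingredient is that, by \eqref{legendre}, $\Phi^*(s) \ge s\tau_0 - \Phi(\tau_0)$ for every $\tau_0 \ge 0$, so that
\[
t\,\Phi^*\!\left(\frac{d(p,q)}{t}\right) \ge \tau_0\, d(p,q) - t\,\Phi(\tau_0), \qquad \forall\, \tau_0 \ge 0.
\]
For the lower bound \eqref{lowu}, I would insert $g(q) \ge -C(1 + d_0(q))$ together with $d_0(q) \le d_0(p) + d(p,q)$ into \eqref{HLF} and apply the displayed inequality with $\tau_0 = C$: the two terms linear in $d(p,q)$ cancel, leaving $u(t,p) \ge -C(1 + d_0(p)) - t\,\Phi(C)$, which is \eqref{lowu} with $C' := \max\{C, \Phi(C)\}$. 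I also record that $\Phi^*(0) = 0$, since $\Phi \ge 0$ is nondecreasing with $\Phi(0) = 0$, so that testing the infimum with $q = p$ gives the complementary bound $u(t,p) \le g(p)$.

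The same inequality makes the objective coercive in $q$: taking $\tau_0 = 2C$ yields $g(q) + t\,\Phi^*(d(p,q)/t) \ge -C(1 + d_0(p)) + C\, d(p,q) - t\,\Phi(2C)$, which diverges as $d(p,q) \to +\infty$ for bounded $t$. To prove lower semicontinuity of $u$ at $(t_0, p_0)$ with $t_0 > 0$, I would argue by contradiction: along a sequence $(t_k, p_k) \to (t_0, p_0)$ realizing a finite $\liminf$ of $u$ (finite by \eqref{lowu}), the near-minimizers $q_k$ have objective values converging to that $\liminf$, so the coercivity bound confines $\{q_k\}$ to a fixed compact CC ball, and a subsequence converges to some $q_*$. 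Since $g$ is lower semicontinuous and the kernel $q \mapsto t\,\Phi^*(d(p,q)/t)$ is lower semicontinuous (a nondecreasing lower semicontinuous function composed with the continuous map $d(p,\cdot)$), passing to the $\liminf$ gives $\liminf_k u(t_k,p_k) \ge g(q_*) + t_0\Phi^*(d(p_0,q_*)/t_0) \ge u(t_0,p_0)$, a contradiction.

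At the initial time $t_0 = 0$, where $u(0,p_0) = g(p_0)$, the behaviour of the near-minimizers $q_k$ splits into two cases. If $d(p_k, q_k) \to 0$ along a subsequence, then $q_k \to p_0$ and lower semicontinuity of $g$ gives $\liminf_k u(t_k,p_k) \ge g(p_0)$; otherwise $d(p_k, q_k) \ge \delta > 0$ along a subsequence, and applying the coercivity bound with $\tau_0$ taken arbitrarily large forces $\liminf_k u(t_k, p_k) = +\infty$. In either case the lower semicontinuity inequality holds. This boundary case is the step I expect to be the main obstacle, since it is precisely here that the superlinear growth of $\Phi^*$ (which follows from the displayed inequality, as $\Phi^*(s)/s \ge \tau_0 - \Phi(\tau_0)/s \to \tau_0$ for every $\tau_0$) must dominate the merely linear lower bound \eqref{DatumSuperlinear} on $g$ to prevent the escape of minimizers.

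Finally, for the attainment when $g$ is bounded, say $|g| \le M$, I would combine $u(t,p) \le g(p) \le M$ with $g(q) \ge -M$: any $q$ with $g(q) + t\,\Phi^*(d(p,q)/t) \le M$ satisfies $\Phi^*(d(p,q)/t) \le 2M/t$. Because $\Phi^*$ is nondecreasing, lower semicontinuous and diverges to $+\infty$, its sublevel set $\{\Phi^* \le 2M/t\}$ is a bounded interval, which produces a radius $R = R(\Phi, t, M)$ with $d(p,q) \le R$ for every such $q$, and in particular for every minimizing sequence. The closed ball $\overline{B_{CC}(p,R)}$ is compact by the inclusion \eqref{comb} between CC balls and Euclidean balls, so the lower semicontinuous objective attains its minimum on it; hence the infimum in \eqref{HLF} is a minimum, realized inside a CC ball whose radius depends only on $\Phi$, $t$ and $\|g\|_\infty$.
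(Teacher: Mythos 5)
Your proposal is correct, and there is nothing to compare it against inside the paper itself: the authors state this lemma as a quoted result from \cite{D07} and give no proof, so your argument is a self-contained reconstruction. It is the natural one, and it works: the Fenchel inequality $t\,\Phi^*\left(d(p,q)/t\right)\ge \tau_0\,d(p,q)-t\,\Phi(\tau_0)$ combined with \eqref{DatumSuperlinear} and the triangle inequality gives \eqref{lowu} exactly as you write; the same inequality with large $\tau_0$ gives the coercivity that confines near-minimizers to compact CC balls (closed CC balls are compact by \eqref{comb} and continuity of $d$), and the direct method then yields both lower semicontinuity for $t_0>0$ and attainment for bounded $g$. Your treatment of the initial layer $t_0=0$ is the genuinely delicate point and you handle it correctly: the dichotomy on $\liminf_k d(p_k,q_k)$, with the superlinearity of $\Phi^*$ killing the second case as $t_k\to 0$, is precisely where the linear growth bound on $g$ must lose to $\Phi^*$, and your bound $-C(1+d_0(p_k))+(\tau_0-C)\delta-t_k\Phi(\tau_0)$ with $\tau_0$ arbitrary does the job. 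Two minor remarks. First, your observation that the radius of attainment depends on $\Phi$, $t$ \emph{and} $\|g\|_\infty$ is accurate; the lemma's phrasing ``depending only on $\Phi$ and $t$'' should be read with $g$ fixed, so this is a defect of the statement's wording, not of your proof. Second, in the LSC argument for $t_0>0$ you should make explicit that the kernel $(t,p,q)\mapsto t\,\Phi^*\bigl(d(p,q)/t\bigr)$ is lower semicontinuous even where $\Phi^*=+\infty$ (it is a supremum of continuous functions, so this is automatic), and that the subsequence extraction is done along a sequence realizing the $\liminf$, so that further subsequences preserve the limiting value; both are routine and do not affect the validity of the argument.
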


Our first result is the following.

\begin{theorem}
\label{PropSquareH-L}
Let $\G$ be a step 2 Carnot group with CC distance $d$. Let $d_0=d(\cdot, 0)$ in $\G$. Assume that $g\in LSC(\G)$ satisfies \eqref{DatumSuperlinear}. Let $\Phi(s)=s^2/2$ for $s\geq 0$ and $u$ be defined as in \eqref{HLF}. Then $u(t,\cdot)$ is h-semiconcave in $\G$, for every $t > 0$.
\end{theorem}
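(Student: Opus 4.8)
The plan is to exploit the specific form of the Hopf--Lax formula together with the left-invariance of h-semiconcavity and the stability result in Proposition \ref{pinf}. First I would compute the Legendre--Fenchel transform: for $\Phi(s)=s^2/2$ one finds from \eqref{legendre} that $\Phi^*(s)=s^2/2$ as well, so that \eqref{HLF} reduces to
\[
u(t,p)=\inf_{q\in\G}\left[g(q)+\frac{d^2(p,q)}{2t}\right],\qquad (t,p)\in(0,+\infty)\times\G.
\]
Thus, for a fixed $t>0$, $u(t,\cdot)$ is an infimum over $q\in\G$ of the functions $p\mapsto u_q(p):=g(q)+\frac{1}{2t}d^2(p,q)$, and the strategy is to show these are h-semiconcave with a $q$-uniform constant.

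The heart of the argument is to show that each $u_q$ is h-semiconcave in $p$ with an h-semiconcavity constant independent of $q$. Here I would use the left-invariance \eqref{homo}, which gives $d^2(p,q)=d_0^2(q^{-1}\cdot p)$. For $h\in\HH_0$, writing $p'=q^{-1}\cdot p$, we get
\[
u_q(p\cdot h)+u_q(p\cdot h^{-1})-2u_q(p)=d_0^2(p'\cdot h)+d_0^2(p'\cdot h^{-1})-2d_0^2(p').
\]
This is exactly the quantity controlled by Theorem \ref{t2}: if $C$ denotes the h-semiconcavity constant of $d_0^2$, the right-hand side is at most $C|h|^2$. Hence each $u_q$ is h-semiconcave with constant $C/(2t)$, independent of $q$; the additive constant $g(q)$ and the positive factor $1/(2t)$ do not affect this bound beyond the obvious scaling. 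I expect this step to carry the real content, and its key point is the observation that the defining inequality \eqref{defsc} for h-semiconcavity is stated in terms of the left translations $p\cdot h$, so it is automatically preserved when passing from $d_0^2$ to $d_0^2(q^{-1}\cdot\,\cdot)$.

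Finally I would verify the hypotheses of Proposition \ref{pinf} and apply it. By Lemma \ref{D07}, the lower bound \eqref{lowu} ensures $u(t,p)>-\infty$ for every $p$, and the same lemma (together with the result of \cite{D07}) guarantees $u\in LSC([0,+\infty)\times\G)$, whence $u(t,\cdot)\in LSC(\G)$ for each fixed $t>0$ by restriction to the slice $\{t\}\times\G$. Since $\{u_q\}_{q\in\G}$ is then a family of h-semiconcave functions with common constant $C/(2t)$, Proposition \ref{pinf} yields that the infimum $u(t,\cdot)$ is h-semiconcave in $\G$ with the same constant $C/(2t)$, which also makes transparent the claimed dependence of the constant on $t$ but not on the space variable. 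The main obstacle is not analytic but structural: one must recognize that h-semiconcavity transfers from $d_0^2$ to $d^2(\cdot,q)$ with a \emph{uniform} constant, which is precisely what the left-invariant formulation of the definition provides.
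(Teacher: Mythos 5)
Your proof is correct and follows essentially the same route as the paper: compute $\Phi^*(s)=s^2/2$, use left-invariance to write $d^2(p,q)=d_0^2(q^{-1}\cdot p)$, invoke Theorem \ref{t2} to get a $q$-uniform h-semiconcavity constant $C/(2t)$ for each function in the infimum, check $u(t,\cdot)>-\infty$ and $u(t,\cdot)\in LSC(\G)$ via Lemma \ref{D07}, and conclude by Proposition \ref{pinf}. Your explicit verification that left translation preserves the h-semiconcavity inequality (via $(q^{-1}\cdot p)\cdot h=q^{-1}\cdot(p\cdot h)$) is a detail the paper leaves implicit, but the argument is the same.
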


\begin{proof}
Since $d(p,q)=d(q^{-1}\cdot p,0) = d_0(q^{-1}\cdot p)$ holds for all $p, q\in \G$, given the choice $\Phi(s)=s^2/2$ for $s\geq 0$, we have $\Phi^*(s)=s^2/2$ by \eqref{legendre} and the function $u$ in \eqref{HLF} reduces to 
$$
u(t,p) = \inf_{q \in \G} \left\{  g(q) + \frac{d_0^2(q^{-1} \cdot p)}{2t}\right\}, \quad (t, p)\in [0, +\infty)\times \G.  
$$
By Theorem \ref{t2}, for every $q \in \G$ and $t > 0$, the function 
\[
p \mapsto g(q) + \frac{d_0^2(q^{-1} \cdot p)}{2t}
\]
is h-semiconcave with h-semiconcavity constant $C/(2t)$,  where $C>0$ is the h-semiconcavity constant of $d_0^2$. In view of \eqref{lowu}, we have $u(t,p) > -\infty$ for any $t>0$ and $p\in \G$. Then the h-semiconcavity of $u(t, \cdot)$ in $\G$ follows from Proposition \ref{pinf}. \end{proof}
\begin{remark} 
\label{Rem_Min}
Theorem \ref{PropSquareH-L} implies that  $u(t,\cdot)$ defined by \eqref{HLF} is also locally Lipschitz continuous with respect to the CC distance, which in turn yields a local H\"older continuity with respect to the Euclidean distance. 
\end{remark}
We next generalize the previous result for more general Hamilton-Jacobi equations but under additional boundedness assumption on $g$ and locally strong convexity of the Hamiltonian. Here we say that a function $f$  is {\it locally strongly convex}  in an open set $\Omega\subset \R^n$ if for every compact convex set $K \subset \R$, there exists a constant $C(K) > 0$ such that 
\[
\lambda f(x) + (1 - \lambda) f(y) - f(\lambda x + (1 - \lambda) y) \ge \lambda (1 - \lambda)C(K) |x - y|^2, \ \  \forall \, x , y \in K, \lambda \in [0,1]. 
\]

\begin{theorem}
\label{TheoremH-J}
Let $\G$ be a step 2 Carnot group with CC distance $d$. Assume that $g\in LSC(\G)$ is bounded. Let $\Phi:[0,+\infty)\to [0,+\infty)$ be a continuous, convex, non-decreasing function with $\Phi(0) = 0$. Assume in addition that $\Phi$ is coercive in the sense that 
\begin{equation}\label{coercive}
\frac{\Phi(\tau)}{\tau}\to +\infty\quad \text{as $\tau\to +\infty$,}
\end{equation}
and the even extension $\widetilde{\Phi}: \R\to [0, +\infty)$ of $\Phi$ is locally strongly convex in $\R$. 
 Let $u$ be the viscosity solution of \eqref{H-J_general} defined as in \eqref{HLF}.
 Then, $u(t, \cdot)$ is h-semiconcave in $\G$ for every $t > 0$.
\end{theorem}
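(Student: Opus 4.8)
The plan is to run the same strategy as in the proof of Theorem~\ref{PropSquareH-L}: fix $t>0$, exhibit $u(t,\cdot)$ as an infimum of functions that are h-semiconcave with one common constant, and conclude by Proposition~\ref{pinf}. Setting $\Psi(\tau):=t\,\Phi^*(\tau/t)$ for $\tau\ge 0$ and using $d(p,q)=d_0(q^{-1}\cdot p)$ from \eqref{homo}, the Hopf-Lax formula \eqref{HLF} reads
\[
u(t,p)=\inf_{q\in\G}u_q(t,p),\qquad u_q(t,p):=g(q)+\Psi\big(d_0(q^{-1}\cdot p)\big).
\]
The genuine difference with the quadratic case is that $\Psi$ is no longer proportional to $\tau^2$, so $\Psi(d_0(\cdot))$ is in general only \emph{locally} h-semiconcave (this is precisely Corollary~\ref{compof}), and its constant may deteriorate as $d_0\to\infty$. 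To keep the constant uniform I would first localize the competitors $q$, which is exactly where the boundedness of $g$ is used.

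The main step, and the one I expect to be the real obstacle, is to check the hypotheses of Corollary~\ref{compof} for $\Psi$, namely that $\Psi$ is non-decreasing and that its even extension is locally semiconcave in the sense of \eqref{lsemiconc}. Write $F:=(\widetilde\Phi)^*$ for the Legendre conjugate of the even extension of $\Phi$. A short computation gives $\widetilde\Psi(\tau)=t\,F(\tau/t)$ (for $s\ge0$ the supremum defining $F(s)$ is attained on $\tau\ge0$ since $\widetilde\Phi$ is even, so $F$ agrees with $\Phi^*$ on $[0,\infty)$), whence it suffices to show that $F$ is non-decreasing on $[0,\infty)$ and locally semiconcave. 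The first property follows from \eqref{legendre} and the assumptions on $\Phi$ (convex, non-decreasing, $\Phi(0)=0$, coercive by \eqref{coercive}), which also give $F(0)=0$. For the second, the key is convex duality: local strong convexity of $\widetilde\Phi$ forces $F$ to be of class $C^{1,1}_{\mathrm{loc}}$. Indeed, by the coercivity \eqref{coercive} the maximizers $\tau^*(s)=F'(s)$ stay in a bounded interval $[-M,M]$ for $s$ in any compact set $[-\rho,\rho]$; on the convex set $[-M,M]$ the function $\widetilde\Phi$ is $m$-strongly convex with $m=2C([-M,M])>0$, and the optimality relations $s_i\in\partial\widetilde\Phi(\tau_i)$ give $(s_1-s_2)(\tau_1-\tau_2)\ge m(\tau_1-\tau_2)^2$, so $F'$ is $1/m$-Lipschitz on $[-\rho,\rho]$. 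Hence $F''\le 1/m$ there, i.e.\ $F$ is locally semiconcave; since $F$ is even and $F'(0)=0$, no upward corner appears at the origin. Rescaling by $t>0$ transfers this to $\widetilde\Psi$.

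With this regularity of $\Psi$ in hand I would carry out the localization around an arbitrary $p_0\in\G$, exactly as in claim~\eqref{locp} of the proof of Theorem~\ref{t1}. Put $\Omega:=B_{CC}(p_0,1)$. Since $g$ is bounded, Lemma~\ref{D07} furnishes a radius $R=R(\Phi,t)$, independent of $p_0$, such that for every $p\in\Omega$ the infimum defining $u(t,p)$ is attained inside $\overline{B_{CC}(p,R)}\subset Q:=\overline{B_{CC}(p_0,R+1)}$; consequently $u(t,p)=\inf_{q\in Q}u_q(t,p)$ for all $p\in\Omega$. For each fixed $q\in Q$, the function $u_q(t,\cdot)$ is, up to the additive constant $g(q)$, the left translate by $q^{-1}$ of $\Psi(d_0(\cdot))$; and h-semiconcavity is invariant under left translation, because with $v(p)=\Psi(d_0(q^{-1}\cdot p))$ the second difference $v(p\cdot h)+v(p\cdot h^{-1})-2v(p)$ equals the corresponding second difference of $\Psi(d_0(\cdot))$ at the base point $q^{-1}\cdot p$. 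Applying Corollary~\ref{compof} to $\Psi$ on the bounded set $q^{-1}\cdot\Omega$ therefore shows each $u_q(t,\cdot)$ is h-semiconcave on $\Omega$. The arguments $d_0(q^{-1}\cdot p)/t$ that occur range within $[0,(R+2)/t]$, uniformly in $q\in Q$ and in $p_0$, so by the previous step the h-semiconcavity constant is a single $C=C(\Phi,t)$. Finally $u(t,\cdot)\in LSC(\G)$ and $u(t,\cdot)\ge \inf_\G g>-\infty$ by Lemma~\ref{D07}, so Proposition~\ref{pinf} yields that $u(t,\cdot)$ is h-semiconcave on $\Omega$ with constant $C$. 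Since $C$ does not depend on $p_0$, the viscosity characterization of Theorem~\ref{tp2} lets these local estimates combine into global h-semiconcavity of $u(t,\cdot)$ on $\G$, just as in the passage from claim~\eqref{locp} to the conclusion of Theorem~\ref{t1}.
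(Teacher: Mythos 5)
Your proposal is correct and follows essentially the same route as the paper's own proof: establish local semiconcavity of the even extension of $\Phi^*$, apply Corollary \ref{compof} to get local h-semiconcavity of $\Psi(d_0)$, localize the infimum in the Hopf-Lax formula using the boundedness of $g$, and conclude via Proposition \ref{pinf} with an h-semiconcavity constant uniform in the base point $p_0$, gluing by the viscosity characterization of Theorem \ref{tp2}. The only differences are cosmetic: you prove the $C^{1,1}_{\mathrm{loc}}$ regularity of $(\widetilde{\Phi})^*$ by hand via strong monotonicity of the subdifferential where the paper cites \cite{E10} and localizes, and you invoke Lemma \ref{D07} for the localization radius where the paper re-derives the same bound inline.
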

\begin{proof}
It is well known that the Legendre-Fenchel transform of a coercive, strongly convex function  is semiconcave in $\R$; see for example \cite[Lemma 4 in Chapter 3.4]{E10}. Using \eqref{coercive}, one can localize this property to prove the Legendre-Fenchel transform of a coercive, locally strongly convex function is locally semiconcave in $\R$. Since $\widetilde{\Phi}$ is locally strongly convex in $\R$, we thus obtain the local semiconcavity of $(\widetilde{\Phi})^\ast$ in $\R$. 

On the other hand, as $\Psi=\Phi^\ast$ in $[0, +\infty)$, we have $(\widetilde{\Phi})^\ast\big|_{[0, +\infty)}=\Psi$ under current assumptions on $\Phi$; in other words, $\Psi$ has a semiconcave even extension in $\R$. Now applying Corollary \ref{compof} with $\Psi=\Phi^\ast$, we obtain the local h-semiconcavity of $\Phi^\ast(d_0)$, that is $\Phi^\ast(d_0)$ is h-semiconcave in any bounded open set $\Omega\subset \G$. 

Let us fix $t>0$ arbitrarily. For any $p\in \G$, we can use the boundedness of $g$ to deduce that 
\[
g(q)+t \Phi^\ast \left(\frac{d_0(q^{-1}\cdot p)}{t}\right)\to +\infty\quad \text{as $d_0(q)\to +\infty$. }
\]
By \eqref{HLF}, it then follows that there exists $\hat{q}\in \G$ depending on $p$ such that 
\[
u(t, p)=g(\hat{q})+t \Phi^\ast \left(\frac{d_0(\hat{q}^{-1}\cdot p)}{t}\right)\leq g(p), 
\]
which yields 
\[
t \Phi^\ast \left(\frac{d_0(\hat{q}^{-1}\cdot p)}{t}\right)\leq g(p)-g(\hat{q}).
\]
Applying the boundedness of $g$ again, we are led to $d_0(\hat{q}^{-1}\cdot p)< M$ for some $M>0$ depending on $\Phi, t$ but independent of $p, \hat{q}$. In other words, for every fixed $t>0$ and $p\in \G$, we have 
\begin{equation}\label{HLF-bound2}
u(t, p)=\inf_{\Omega}\left\{g(q)+t \Phi^\ast \left(\frac{d_0({q}^{-1}\cdot p)}{t}\right)\right\},
\end{equation}
for any open set $\Omega\subset \G$ satisfying $B_{CC}(p, M)\subset \Omega$. 

By the local h-semiconcavity of $\Phi^\ast(d_0)$, for any $p_0\in \G$, we see that  
\[
p\mapsto g(q)+t \Phi^\ast \left(\frac{d_0({q}^{-1}\cdot p)}{t}\right)
\]
is h-semiconcave in $B_{CC}(p_0, 1)$ for all $q\in B_{CC}(p_0, M+1)$ with h-semiconcavity constant depending only on $M$ (in particular independent of $p_0$). In view of Proposition \ref{pinf}, we obtain the h-semiconcavity of $u(t, \cdot)$ in $B_{CC}(p_0, 1)$ with the same h-semiconcavity constant by taking infimum of the function above over $q\in B_{CC}(p_0, M+1)$ and noting that
\[
u(t, \cdot)=\inf_{q\in B_{CC}(p_0, M+1)} \left\{g(q)+t \Phi^\ast \left(\frac{d_0({q}^{-1}\cdot p)}{t}\right)\right\},
\] 
thanks to \eqref{HLF-bound2} with $\Omega=B_{CC}(p_0, M+1)\supset B_{CC}(p, M)$.

Since the h-semiconcavity constant of $u(t, \cdot)$ in $B_{CC}(p_0, 1)$ is independent of $p_0\in \G$, we thus obtain the h-semiconcavity of $u(t, \cdot)$ in $\G$. 
\end{proof}

Our h-semiconcavity result above can be applied to some particular Hamilton-Jacobi equations such as 
\[
u_t+\frac{1}{\alpha} |\nabla_H u|^\alpha=0 \quad \text{in $(0, +\infty)\times \G$},
\]
with a bounded initial value $g\in LSC(\G)$ and $1<\alpha\leq 2$. The case $\alpha=1$ is not covered by Theorem \ref{TheoremH-J}, but if $g\in LSC(\G)$ is assumed to be bounded and h-semiconcave in $\G$, then we have preservation of the spatial h-semiconcavity of the viscosity solution given by the optimal control formula
\[
u(t, p)=\inf_{q\in B_{CC}(p, t)} g(q),\quad t>0,\ p\in \G.
\]
The proof is simply a straightforward application of Proposition \ref{pinf}. 

It is not our intention to study in detail stationary PDE problems in this paper, but one possible simple application of Theorem \ref{t2} in this direction is for the  eikonal equation 
\begin{equation}\label{eikonal}
|\nabla_H u|=1 \quad \text{in $\Omega$},    
\end{equation}
where $\Omega\subset \G$ is a given open set. Let $\bS =\G\setminus \Omega$, we define the {\it CC distance from the set $\bS$} by
\[
d_\bS(p) := \min_{q \in \bS} d(p,q) = \min_{q \in \bS} d_0(q^{-1} \cdot p).
\]
Note that $d_\bS$ is continuous, and when $\bS = \{0\}$, $d_\bS=d_0$, which is exactly the CC distance from the group identity. It is well known \cite{D07} that $u=d_\bS$ is a viscosity solution of \eqref{eikonal} satisfying the boundary condition $u=0$ on $\partial \Omega$; we refer to \cite{Bi10} for more discussions about the distance function and eikonal equation. We can use Theorem \ref{t2} to prove easily that the square of the solution $d_\bS$ is h-semiconcave in $\Omega$. 

\begin{proposition}\label{pd2}
Let $\bS \subset \G$ be a nonempty closed set on a step 2 Carnot group $\G$. Then $d_\bS^2$ is an h-semiconcave function in $\G\setminus \bS$.
\end{proposition}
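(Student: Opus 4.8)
The plan is to realize $d_\bS^2$ as an infimum of left-translated copies of $d_0^2$ and then to invoke the stability result of Proposition \ref{pinf}. Since $t \mapsto t^2$ is increasing on $[0,+\infty)$ and $d(p,q) = d_0(q^{-1}\cdot p)$ by \eqref{homo}, I would first rewrite
\[
d_\bS^2(p) = \inf_{q \in \bS} d_0^2(q^{-1}\cdot p) = \inf_{q \in \bS} u_q(p), \qquad u_q(p) := d_0^2(q^{-1}\cdot p).
\]
The whole proof then rests on showing that each $u_q$ is h-semiconcave in $\G$ with an h-semiconcavity constant that does not depend on $q$, so that Proposition \ref{pinf} applies to the family $\{u_q\}_{q\in\bS}$.

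The key step is the uniform h-semiconcavity of $u_q$, and here the only slightly subtle point is to verify that left translation by $q^{-1}$ preserves h-semiconcavity with an unchanged constant. The crucial observation is that the associativity of the group law yields $q^{-1}\cdot(p\cdot h) = (q^{-1}\cdot p)\cdot h$ for every horizontal increment $h \in \HH_0$; consequently left translation transports the defining inequality \eqref{defsc} without altering $h$ at all. Writing $p' = q^{-1}\cdot p$, this gives
\[
u_q(p\cdot h) + u_q(p\cdot h^{-1}) - 2 u_q(p) = d_0^2(p'\cdot h) + d_0^2(p'\cdot h^{-1}) - 2 d_0^2(p') \le C |h|^2,
\]
where $C$ is exactly the h-semiconcavity constant of $d_0^2$ provided by Theorem \ref{t2}; since it comes from a single application of Theorem \ref{t2} to $d_0^2$, it is independent of the base point $p'$ and hence of $q$.

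Finally I would apply Proposition \ref{pinf} on the open set $\G\setminus\bS$. The uniform constant $C$ is already in hand from the previous step; moreover $d_\bS^2 \ge 0 > -\infty$, so the infimum is finite, and $d_\bS$ (and thus $d_\bS^2$) is continuous, so $d_\bS^2 \in LSC(\G\setminus\bS)$. Proposition \ref{pinf} then returns the h-semiconcavity of $d_\bS^2$ in $\G\setminus\bS$ with the same constant $C$, completing the argument.

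I do not anticipate a genuine obstacle: the statement reduces cleanly to Theorem \ref{t2} together with the stability-under-infimum Proposition \ref{pinf}. The one place deserving care is the verification in the middle paragraph that left translation preserves h-semiconcavity with an \emph{unchanged} constant; one might mistakenly expect this to require transforming the horizontal increment $h$, but associativity shows the same $h$ works, and this is precisely what makes the resulting constant uniform over $q\in\bS$.
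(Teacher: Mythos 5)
Your proposal is correct and follows essentially the same route as the paper: both write $d_\bS^2$ as an infimum over $q\in\bS$ of the left-translated functions $p\mapsto d_0^2(q^{-1}\cdot p)$, note that each has the same h-semiconcavity constant as $d_0^2$ (your associativity argument is exactly the justification the paper leaves implicit), and conclude by the stability-under-infimum result, Proposition \ref{pinf}.
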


\begin{proof}
Observe that 
\[
d^2_\bS(p) = \min_{q \in \bS} d^2(p, q)= \min_{q \in \bS} d_0^2(q^{-1} \cdot p),
\]
and for every $q \in \bS$, the function $p \mapsto d_0^2(q^{-1} \cdot p)$ is h-semiconcave with h-semiconcavity constant the same as the one of $d_0^2$. As a result, the proof follows from Proposition \ref{pinf}. 
\end{proof}

\appendix

\section{Properties of special functions and details of computation}\label{sec:Ab}
\setcounter{equation}{0}

This appendix provides additional details for the arguments presented in Example \ref{ce}.
Recall that from definition \eqref{mu} we have
\[
\mu(s) = -(s \cot s)' = \frac{s - \sin{s} \cos{s}}{\sin^2{s}} = \frac{2s - \sin(2s)}{2 \sin^2{s}}.
\]
See Figure \ref{fig4} for the graph of $\mu$. 
\begin{figure}[htp]
	\centering
  \begin{overpic}[scale = 0.4]{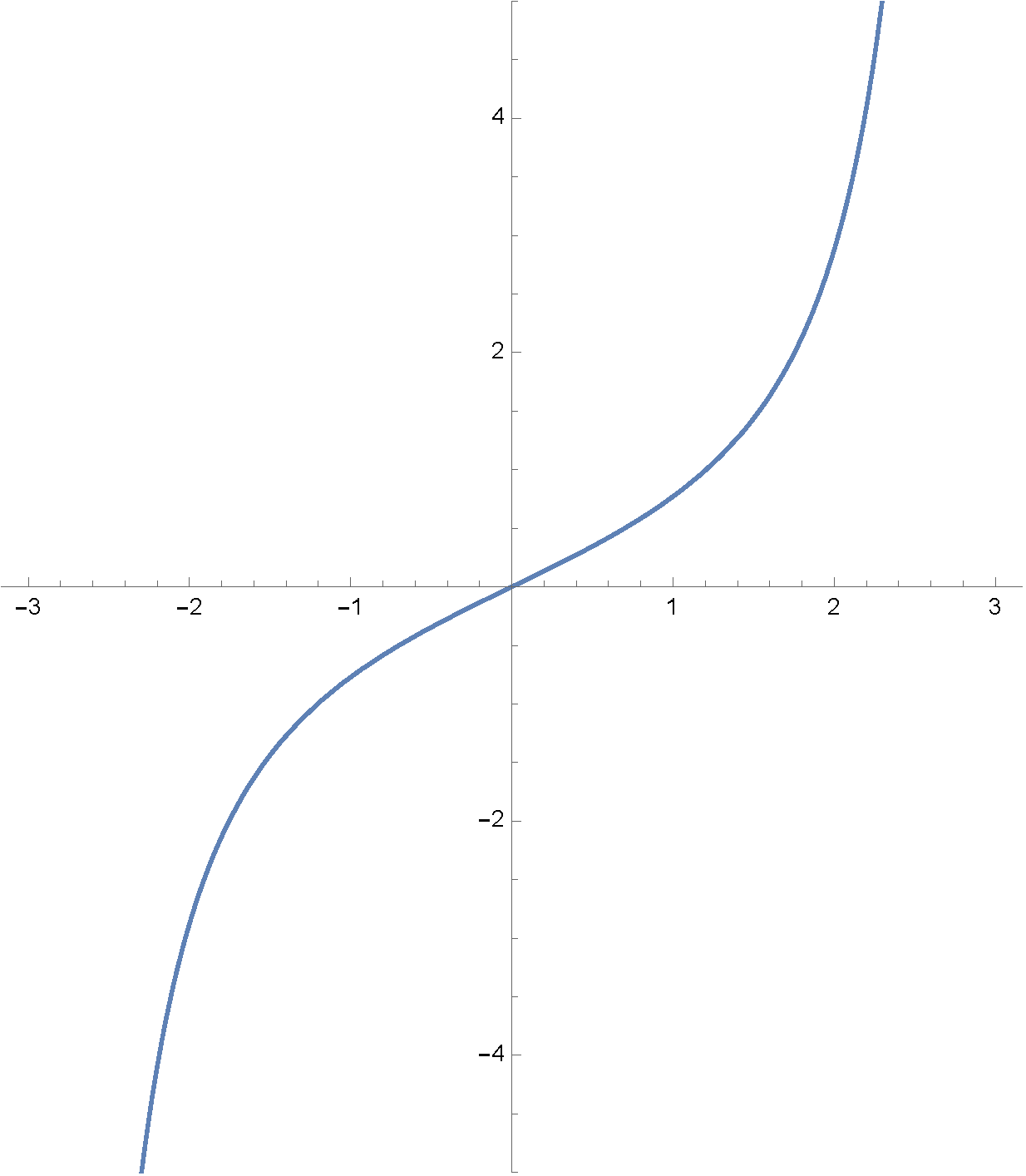}
       \put(60,70){$\mu$}
   \end{overpic}
	\caption[image]{The graph of the function $\mu$}\label{fig4}
\end{figure}

By direct computations, we have
\begin{align}\label{dmu}
\mu'(s) = \frac{2 (\sin{s} - s \cos{s})}{\sin^3{s}} > 0, \qquad \forall \, s \in (-\pi, \pi),
\end{align}
which implies $\mu$ is  an increasing diffeomorphism between $(-\pi, \pi)$ and $\R$. Consult also \cite[Lemme~3, p.~112]{G77}. As a result, the inverse function $\mu^{-1}$ is an increasing diffeomorphism between $\R$ and $(-\pi, \pi)$. By the inverse function theorem we have
\begin{align}\label{dmui}
(\mu^{-1})'(s) = \frac{1}{\mu'(\mu^{-1}(s))}, \qquad \forall \, s \in \R.
\end{align}

Recall that we have the formula of $d_0^2$ \eqref{expd2}. For $z>0$ and $(x,y) \neq (0,0)$, the auxiliary function $\theta = \theta(x,y,z) = \mu^{-1}\left(\frac{4z}{x^2 + y^2}\right)$ is a smooth function of $(x,y,z)$. As a result, by \eqref{dmui}, we can calculate the derivatives of $\theta$:
\begin{align}\label{dtx}
\partial_x \theta = (\mu^{-1})'\left(\frac{4z}{x^2 + y^2}\right)  \left(-\frac{4z}{(x^2 + y^2)^2}\right)  2x = -\frac{2x \mu(\theta)}{(x^2 + y^2) \mu'(\theta)}
\end{align}
and similarly
\begin{align}\label{dtyz}
\partial_y \theta =  -\frac{2y \mu(\theta)}{(x^2 + y^2) \mu'(\theta)}, \qquad 
\partial_z \theta = \frac{4}{(x^2 + y^2) \mu'(\theta)}.
\end{align}

Now using \eqref{dtx} we obtain
\begin{align}\nonumber
&\partial_x d_0^2 = 2 \left( \frac{\theta}{\sin{\theta}}\right) \frac{\sin{\theta} - \theta \cos{\theta}}{\sin^2{\theta}} \partial_x \theta (x^2 + y^2) + \left( \frac{\theta}{\sin{\theta}}\right)^2 2x \\
\nonumber
= \,& 2 \left( \frac{\theta}{\sin{\theta}}\right) \frac{\sin{\theta} - \theta \cos{\theta}}{\sin^2{\theta}} \left(-\frac{2x \mu(\theta)}{(x^2 + y^2) \mu'(\theta)} \right) (x^2 + y^2)  +  \left( \frac{\theta}{\sin{\theta}}\right)^2 2x \\
\label{dd2x}
= \, & 2x \left( \frac{\theta}{\sin{\theta}}\right) \left(2 \frac{\sin{\theta} - \theta \cos{\theta}}{\sin^2{\theta}}\frac{-\theta + \sin{\theta \cos{\theta}}}{\sin^2{\theta}} \frac{\sin^3{\theta}}{2 (\sin{\theta} - \theta \cos{\theta})} + \frac{\theta}{\sin{\theta}} \right) = 2 \theta \cot{\theta} \, x.
\end{align}
For a similar reason, 
\begin{align}\label{dd2yz}
\partial_y d_0^2 = 2 \theta \cot{\theta} \, y, \qquad
\partial_z d_0^2 = 4 \theta.
\end{align}

Taking derivatives again we have
\begin{align}\label{pd2d2}
\partial_{xx} d_0^2 = 2 \theta \cot{\theta} + \frac{4x^2 \mu^2(\theta)}{(x^2 + y^2) \mu'(\theta)}, \qquad
&\partial_{xy} d_0^2 = \frac{4xy \mu^2(\theta)}{(x^2 + y^2) \mu'(\theta)}, \\
\label{pd2d3}
\partial_{yy} d_0^2 = 2 \theta \cot{\theta} + \frac{4y^2 \mu^2(\theta)}{(x^2 + y^2) \mu'(\theta)}, \qquad 
&\partial_{xz} d_0^2 = 4 \partial_x \theta = -\frac{8x \mu(\theta)}{(x^2 + y^2) \mu'(\theta)}, \\
\label{pd2d4}
\partial_{yz} d_0^2 = 4 \partial_y \theta = -\frac{8y \mu(\theta)}{(x^2 + y^2) \mu'(\theta)}, \qquad 
&\partial_{zz} d_0^2 = 4 \partial_z \theta =  \frac{16}{(x^2 + y^2) \mu'(\theta)}.
\end{align}


Fixing a $p_0 = (x_0,y_0,z_0) \in \Omega_{a,b,R}$ in Example \ref{ce}, from the definition of $\phi_{p_0}$, we have
\[
(d_0^2 \circ \phi_{p_0})(x,y) =  d_0^2\left(x + x_0, y + y_0, z_0 + \frac{x_0 y - y_0 x}{2} - C (x^2 + y^2)\right).
\]

For the ease of notation we use $F_{p_0}$ to denote $d_0^2 \circ \phi_{p_0}$. Then a direct computation gives
\begin{align*}
\partial_x F &= (\partial_x d_0^2) \circ \phi_{p_0} +  \left( - \frac{y_0}{2} - 2 C x \right) (\partial_z d_0^2) \circ \phi_{p_0}, \\
\partial_y F &= (\partial_y d_0^2) \circ \phi_{p_0} +  \left(  \frac{x_0}{2} - 2 C y \right) (\partial_z d_0^2) \circ \phi_{p_0}, 
\end{align*}
and 
\begin{align*}
\partial_{xx} F &= (\partial_{xx} d_0^2) \circ \phi_{p_0}  +  2  \left( - \frac{y_0}{2} - 2 C x \right) (\partial_{xz} d_0^2) \circ \phi_{p_0} \\
& \hspace{1.2cm} + \left( - \frac{y_0}{2} - 2 C x \right)^2 (\partial_{zz} d_0^2)\circ \phi_{p_0} - 2 C (\partial_z d_0^2) \circ \phi_{p_0}, \\
\partial_{xy} F &= (\partial_{xy} d_0^2) \circ \phi_{p_0}  +  \left(  \frac{x_0}{2} - 2 C y \right) (\partial_{xz} d_0^2) \circ \phi_{p_0} +  \left( - \frac{y_0}{2} - 2 C x \right) (\partial_{yz} d_0^2) \circ \phi_{p_0} \\
 & \hspace{1.2cm} + \left(  \frac{x_0}{2} - 2 C y \right) \left( - \frac{y_0}{2} - 2 C x \right) (\partial_{zz} d_0^2)\circ \phi_{p_0}, \\
\partial_{yy} F &= (\partial_{yy} d_0^2) \circ \phi_{p_0}  +  2  \left(  \frac{x_0}{2} - 2 C y \right) (\partial_{yz} d_0^2) \circ \phi_{p_0}\\
& \hspace{1.2cm} + \left(  \frac{x_0}{2} - 2 C y \right)^2 (\partial_{zz} d_0^2)\circ \phi_{p_0} - 2 C (\partial_z d_0^2) \circ \phi_{p_0}.
\end{align*}
Inserting the point $0$, by \eqref{HeHe}-\eqref{HeHe2} and \eqref{dd2yz}, we obtain
\begin{align}\label{D2t}
\nabla^2 [d_0^2 \circ \phi_{p_0}](0) &= \nabla^2 F_{p_0}(0) = (\nabla_H [d_0^2](p_0))^* - 8 \theta(p_0) C  \, \I_2.
\end{align}

\end{document}